\documentclass{amsart}

\usepackage{amssymb}
\usepackage{color}
\usepackage{cancel}
\usepackage[titletoc]{appendix}

\newcommand{\rmd}{\mathrm{d}}
\newcommand{\rmi}{\mathrm{i}}
\newcommand{\rme}{\mathrm{e}}

\newcommand{\grad}{\mathrm{grad}}

\newcommand{\I}{\mathcal{I}}
\newcommand{\J}{\mathcal{J}}
\newcommand{\K}{\mathcal{K}}
\newcommand{\R}{\mathcal{R}}
\newcommand{\B}{\mathcal{B}}
\newcommand{\M}{\mathcal{M}}

\newcommand{\kFr}{\mathfrak{k}}

\newcommand{\mFr}{\mathfrak{m}}
\newcommand{\nFr}{\mathfrak{n}}

\newcommand{\cFr}{\mathfrak{c}}
\newcommand{\rFr}{\mathfrak{r}}
\DeclareMathOperator{\modR}{mod}
\DeclareMathOperator{\Ord}{Ord}

\newcommand{\Jac}{\mathrm{Jac}}

\DeclareMathOperator{\Complex}{\mathbb{C}}
\DeclareMathOperator{\Real}{\mathbb{R}}
\DeclareMathOperator{\Rational}{\mathbb{Q}}
\DeclareMathOperator{\Integer}{\mathbb{Z}}

\DeclareMathOperator{\rank}{rank}
\DeclareMathOperator{\card}{card}
\DeclareMathOperator{\spanOp}{span}

\newtheorem{lemma}{Lemma}
\newtheorem{teo}{Theorem}
\newtheorem{prop}{Proposition}

\newtheorem{conj}{Conjecture}

\theoremstyle{definition}
\newtheorem{rem}{Remark}
\newtheorem{exam}{Example}

\theoremstyle{plain}

\title[]{New generalisation \\ of Jacobi's derivative formula}
\author{J Bernatska}
\address{}
\email{jbernatska@gmail.com, julia.bernatska@uconn.edu}
\date{\today}

\allowdisplaybreaks[4]

\begin{document}
 
\maketitle 
\begin{abstract}
A stream of new theta relations is obtained. They follow from the general Thomae formula,
which is a new result giving expressions for 
theta derivatives  (the zero values of the lowest non-vanishing derivatives
of theta functions with singular half-period characteristics)
in terms of branch points and the period matrix of a hyperelliptic Riemann surface.
The new theta relations contain 
(i) linear relations on the vector space of first order theta derivatives which are arranged in gradients,
(ii) relations between second order theta derivatives and symmetric bilinear forms on the vector space of the gradients,
(iii) relations between third order theta derivatives and symmetric trilinear forms on the vector space of the gradients,
and (iv) a conjecture regarding higher order theta derivatives.
It is shown how the Schottky identity (in the hyperelliptic case) is derived from the obtained relations. 
\end{abstract}

\section{Introduction}
The recently discovered general Thomae formula  \cite{BerTF2020}
gives rise to completely new relations between theta constants and theta derivatives. 
The zero values of high order derivatives of theta functions with singular half-period characteristics 
are expressed in terms of theta constants and the zero values of first order derivatives of the theta functions
with non-singular odd characteristics.
In the hyperelliptic case explicit formulae are obtained for the zero values of second and third order derivatives
of the theta functions with characteristics of multiplicities $2$ and $3$. Also formulae for the zero values of 
higher order derivatives are suggested.
On the other hand, known relations  can be derived from these new relations,
for example the Schottky relation and its generalisations.

Start with a brief review of known generalisations of Jacobi's derivative formula
\begin{gather}\label{JDF}
 \theta' \big[\begin{smallmatrix} 1 \\ 1 \end{smallmatrix}\big] = 
 - \pi \theta \big[\begin{smallmatrix} 0 \\ 0 \end{smallmatrix}\big] 
 \theta \big[\begin{smallmatrix} 0 \\ 1 \end{smallmatrix}\big]
 \theta \big[\begin{smallmatrix} 1 \\ 0 \end{smallmatrix}\big].
\end{gather}
Its first generalisation is known as the Rosenhain formulae, see \cite{Ros1895}
and a revision in \cite[13.14 and 13.15]{BEL14}, which express the zero values of determinants of Jacobian matrices 
in terms of theta constants in genus two.
Later Frobenius \cite{Fro} gave similar formulas for genera three and four, and
Riemann examined cases up to genus seven~\cite{Rie}.

This type of generalisation in the hyperelliptic case is known as the Riemann-Jacobi derivative formula.
Let $\delta_i$, $i=1$, \ldots, $g$ denote odd characteristics,
and $\varepsilon_j$, $j=0$, \ldots, $g+1$ denote even characteristics as described in \cite[Theorem 3.4, pp.\,1015--1016]{ER2008}.
Here and in what follows $g$ denotes the genus of a curve. The
Riemann-Jacobi derivative formula has the form
\begin{gather*}
\det \big\| \partial_v \theta [\delta_1], \partial_v \theta [\delta_2], \dots, \partial_v \theta [\delta_g] \big\| =
\pm \pi^g \theta [\varepsilon_0] \theta [\varepsilon_1] \dots \theta [\varepsilon_{g+1}],
\end{gather*}
where $\partial_v \theta [\delta_i]$ denotes the gradient of $\theta [\delta_i]$.

Then Riemann and Weil posed the following questions, see \cite[p.\,170]{Ig1982Bull}.
Let $D_\text{odd}$ denote
$\det \big\| \partial_v \theta [\delta_1], \partial_v \theta [\delta_2], \dots, \partial_v \theta [\delta_g] \big\|$.
\begin{enumerate}
\item[(W-1)]  Is $D_\text{odd}$ always a polynomial in the theta constants with integral rational coefficients?
\item[(W-2)] Can $D_\text{odd}$  be expressed as a quotient of two such polynomials?
\end{enumerate}

In \cite{Ig1979} it was shown that $D_\text{odd}$ is always a rational function of the theta constants 
with rational coefficients.
In \cite{Ig1980} a general theory (including the non-hyperelliptic case) 
was developed which gives the answer to the question 
when an expression for $D_\text{odd}$ becomes a polynomial.

Another well-developed direction of generalisation of \eqref{JDF} 
relates to the elliptic theta functions with rational 
characteristics\footnote{The theta function with a rational characteristic is defined 
for all $v\in\Complex$ and $\tau \in\Complex$, $\Im \tau >0$ by
\begin{gather*}
\theta \big[\begin{smallmatrix} a \\ b \end{smallmatrix}\big](v;\tau)
= \sum_{n\in\Integer} \exp \big(\rmi \pi (n+a)^2 \tau + 2 \rmi \pi (n+a)(v+b) \big),
\end{gather*}
where $a$, $b\in \Rational$.
}. In \cite[p.\,117]{Mum} Mumford posed the question about generalisation of Jacobi’s formula to the form
\begin{gather}
 \frac{\partial }{\partial v} \theta \big[\begin{smallmatrix} a \\ b \end{smallmatrix}\big](v;\tau)\big|_{v=0} 
 = \{ \text{cubic polynomial in } \theta \big[\begin{smallmatrix} c \\ d \end{smallmatrix}\big]\text{'s}\}
\end{gather}
for all $a$, $b\in \Rational$, where $c$, $d \in \Rational$. 
Are there similar generalisations of Jacobi’s formula with higher order
derivatives? In \cite{Fra1}  the following theta constant identity was derived:
\begin{gather*}
\frac{6 \theta' \big[\begin{smallmatrix} 1 \\ 1/3 \end{smallmatrix}\big](0;\tau)}
{\rme^{\rmi\pi/3} \theta^3 \big[\begin{smallmatrix} 1/3 \\ 1/3 \end{smallmatrix}\big](0;\tau)
+ \theta^3 \big[\begin{smallmatrix} 1/3 \\  1 \end{smallmatrix}\big](0;\tau)
+ \rme^{5\rmi\pi/3} \theta^3 \big[\begin{smallmatrix} 1/3 \\ 5/3 \end{smallmatrix}\big](0;\tau)} 
= 2\pi \rmi \frac{\theta \big[\begin{smallmatrix} 1 \\ 1/3 \end{smallmatrix}\big](0;9\tau)}
{\theta \big[\begin{smallmatrix} 1 \\ 1/3 \end{smallmatrix}\big](0;3\tau)}.
\end{gather*}
That is, $\theta' \big[\begin{smallmatrix} 1 \\ 1/3 \end{smallmatrix}\big](0,\tau)$
is given by a rational expression of theta constants, and so 
can be viewed as an analogue of Jacobi’s derivative formula.
Note that some theta constants have the second argument equal to $a\tau$ with integer $a$.

With the help of Jacobi's triple product identity (for more details on Jacobi's triple and quintuple product 
identities see \cite[Ch.\,2 \S\,8]{FK2001})
the following results are derived.
In \cite{Mat2016} expressions for $\theta'  \big[\begin{smallmatrix} 1 \\ 1/2 \end{smallmatrix}\big]$, 
$\theta'  \big[\begin{smallmatrix} 1 \\ 1/4 \end{smallmatrix}\big]$ and 
$\theta'  \big[\begin{smallmatrix} 1 \\ 3/4 \end{smallmatrix}\big]$ as cubic polynomials in the theta constants with
rational characteristics are obtained. 
In \cite{Mat2017}  rational expressions in terms of the theta constants 
are found for $\theta'  \big[\begin{smallmatrix} 1 \\ \varepsilon'  \end{smallmatrix}\big]$ 
with $\varepsilon' = \frac{1}{3}$, $\frac{2}{3}$, $\frac{1}{4}$, $\frac{3}{4}$, $\frac{1}{5}$, $\frac{2}{5}$, 
$\frac{3}{5}$, and $\frac{4}{5}$.
A proof of this uses the elementary fact that a holomorphic elliptic function is a constant. 
This method was proposed in \cite{FK2}, and is applicable to arbitrary rational characteristics.
Further progress in this direction was made in \cite{Zemel2019}, 
where polynomial expressions for
theta derivatives with the characteristics from $\frac{1}{4}\Integer$ and  $\frac{1}{3}\Integer$
were found.




The third direction of generalisation of Jacobi’s derivative formula 
involves higher order derivatives of theta functions. 
For example 
\begin{gather*}
\det \begin{pmatrix}  \theta \big[\begin{smallmatrix} 0 \\ 0 \end{smallmatrix}\big] (0;\tau)
& \theta \big[\begin{smallmatrix} 1 \\ 0 \end{smallmatrix}\big](0;\tau)  \\
 \frac{\partial^2}{\partial v^2} \theta \big[\begin{smallmatrix} 0 \\ 0 \end{smallmatrix}\big](v;\tau)\big|_{v=0} 
 &  \frac{\partial^2}{\partial v^2}  \theta \big[\begin{smallmatrix} 1 \\ 0 \end{smallmatrix}\big](v;\tau)\big|_{v=0} 
 \end{pmatrix} \\
  = 4 \pi \rmi \det 
 \begin{pmatrix}  \theta \big[\begin{smallmatrix} 0 \\ 0 \end{smallmatrix}\big] (0;\tau)
& \theta \big[\begin{smallmatrix} 1 \\ 0 \end{smallmatrix}\big](0;\tau)  \\
 \frac{\partial}{\partial \tau} \theta \big[\begin{smallmatrix} 0 \\ 0 \end{smallmatrix}\big](0;\tau) 
 &  \frac{\partial}{\partial \tau}  \theta \big[\begin{smallmatrix} 1 \\ 0 \end{smallmatrix}\big](0;\tau)
 \end{pmatrix}
\end{gather*}
can be expressed as a polynomial in theta constants and 
theta derivatives, the two determinants are equal by the heat equation. 
This expression follows from the identity
\begin{gather}\label{detTheta}
\det \begin{pmatrix}  \Theta \big[\begin{smallmatrix} 0 \\ 0 \end{smallmatrix}\big] (0;\tau)
& \Theta \big[\begin{smallmatrix} 1 \\ 0 \end{smallmatrix}\big](0;\tau)  \\
 \frac{\partial}{\partial \tau} \Theta \big[\begin{smallmatrix} 0 \\ 0 \end{smallmatrix}\big](0;\tau) 
 &  \frac{\partial}{\partial \tau}  \Theta \big[\begin{smallmatrix} 1 \\ 0 \end{smallmatrix}\big](0;\tau)
 \end{pmatrix} = \frac{\rmi}{4\pi} 
 \Big(  \frac{\partial}{\partial v} \theta \big[\begin{smallmatrix} 1 \\ 1 \end{smallmatrix}\big](v;\tau)\big|_{v=0} \Big)^2,
\end{gather}
where $\Theta[\varepsilon] (v;\tau)$ stands for 
$\theta [\varepsilon] (2v;2\tau)$ and denotes the second order\footnote{With a positive integer $N$, 
an $N$-th order theta function with characteristic $[\varepsilon]=\big[\begin{smallmatrix} \varepsilon\phantom{'} \\
\varepsilon' \end{smallmatrix}\big]$, $\varepsilon$, $\varepsilon'\in\Real$, is such a function $\varphi$
which satisfies the following functional equations 
\begin{gather*}
\varphi(v+1) = \rme^{\pi \rmi \varepsilon} \varphi(v),\\
\varphi(v+\tau) = \rme^{-\pi \rmi (\varepsilon' + 2 N v + N \tau)} \varphi(v),\quad \forall v\in\Complex.
\end{gather*}
} theta function with characteristic $[\varepsilon]$. 
The identity \eqref{detTheta}
is deduced from \cite[Ch.\,2 Theorem 5.3]{FK2001} by applying Jacobi’s triple product identity and 
changing to the argument $\tau/2$. In \cite{GM2005}  this identity
was generalised to a higher genus.

Finally, recall one more generalisation, which is the closest to the one proposed in this paper.
In \cite{Gra1988} the theta derivative with non-singular characteristic equal to the vector of Riemann constants
in genus $2$ is expressed in terms of theta constants. A proof is based on
the addition formula for sigma function and relations between $\wp$  functions. 
This result arises as a particular case in \cite[Eqs (31), (30)]{BerTF2020}, where 
it is derived with the help of other technique.

The present paper is a continuation and development of \cite{BerTF2020},
where the general Thomae formula in hyperelliptic case  is obtained. This new formula
expresses theta derivative of an arbitrary order in terms of branch points and period matrix
of a hyperelliptic Riemann surface. 
By an $m$-th \emph{order theta derivative} we call the zero value
of order~$m$ partial derivative of theta function
with a singular characteristic of multiplicity~$m$. 
Zero values of first order derivatives of theta functions 
with odd non-singular characteristics are called \emph{first order theta derivatives} or simply \emph{theta derivatives}.
Zero values of theta functions with even non-singular characteristics are called
\emph{theta constants} as usual.

Applying the second Thomae formula one can derive relations between theta derivatives
and theta constants from the general Thomae formula. Furthermore, these relations show a clear structure.
Let $V$ be the vector space of theta derivatives,
namely:
\begin{gather*}
V = \spanOp \{\partial_v \theta[\varepsilon]\mid \varepsilon 
\text{ are odd non-singular characteristics of half-periods}\},
\end{gather*}
where $\partial_v \theta[\varepsilon]$ is a gradient of $\theta[\varepsilon](v;\tau)$ at $v=0$.
Every vector $\partial_v \theta[\varepsilon]$ has $g$ components. And $V$  
has dimension $g$ indeed, that follows from linear relations between gradients of theta functions
with different odd non-singular characteristics, see section~\ref{ss:FDT}.
Second order theta derivatives are expressed through symmetric bilinear forms on  $V$,
see section  \ref{ss:2derThetaC}. 
Third order theta derivatives are represented as symmetric trilinear forms on $V$, 
see section  \ref{ss:3derThetaC}, and so on.
All these relations can be viewed as 
\emph{generalisations of Jacobi's derivative formula} since they relate high order theta derivatives 
and theta constants, similar to~\eqref{JDF}. 
These basic relations can be used to produce further relations. As an example the Schottky identity is derived below.

The paper is organised as follows. Section 2 contains the minimal background: definitions and notation
regarding theta functions, Thomae theorems with corollaries, and some auxiliary lemmas. 
Section 3 is devoted to the main result: deriving relations between theta derivatives and
 theta constants.  Relations with first, second and third order theta derivatives are described in detail,
and a conjecture about higher order theta derivatives is made. 
In Section 4 Schottky type relations are derived from the relations with second order theta derivatives.
A variety of explicit relations can be found in appendices.

\section{Preliminaries}
\subsection{Hyperelliptic curve and its Jacobian}\label{ss:HypC}
In the paper we deal with hyperelliptic curves $\mathcal{C}$ defined by
\begin{gather}\label{Cg}
 0 = f(x,y) = -y^2 + \prod_{j=1}^{2g+1} (x-e_j),
\end{gather}
where $\{(e_j,0)\}$ are finite branch points and $(x,y)\in \Complex^2$. 
In what follows, for the sake of brevity notation $e_i$ is employed both for branch point $(e_i,0)$ 
and its $x$-coordinate.
All branch points are distinct, so the curves are non-degenerate. 
One more branch point is located at infinity, and serves as the base point.

A homology basis is adopted from Baker \cite[p.\,303]{bak898}.
Imagine a continuous path through all branch points which ends at infinity.
The branch points are denoted by $e_1$, $e_2$, \ldots, $e_{2g+1}$ along the path, infinity is denoted by $e_{0}$.
Cuts are made between points $e_{2k-1}$ and $e_{2k}$ with $k$ from $1$ to $g$ and between $e_{2g+1}$ and $e_0$. 
With $k$ running from $1$ to $g$
an $\mathfrak{a}_k$-cycle encircles the cut $(e_{2k-1},e_{2k})$ clockwise,
and $\mathfrak{b}_k$-cycle comes out from the cut $(e_{2k-1},e_{2k})$ and  enters into the cut $(e_{2g+1},e_{0})$.
These cycles constitute a canonical homology basis on a curve \eqref{Cg}.

Standard holomorphic differentials are employed, see for example \cite[p.\,306]{bak898},
\begin{align}\label{HDifCg}
\rmd u_{2n-1} = \frac{x^{g-n} \rmd x}{-2y} ,\qquad n=1,\,\dots,\,g.  
\end{align} 
All together they are denoted by $\rmd u = (\rmd u_1$, $\rmd u_3$, $\dots$, $\rmd u_{2g-1})^t$.
The differentials are labeled by Sato weights in subscripts. The weight shows
the exponent of the first term in the expansion of the corresponding integral $u_{2n-1}$ 
about infinity in the local parameter $\xi$ which is introduced by $x=\xi^{-2}$.

Integrals of the holomorphic differentials along the canonical homology cycles give
first kind periods
\begin{align*}
 &\omega = (\omega_{ij})= \bigg( \int_{\mathfrak{a}_j} \rmd u_i\bigg),&
 &\omega' = (\omega'_{ij}) = \bigg(\int_{\mathfrak{b}_j} \rmd u_i \bigg).&
\end{align*}
The both first kind periods form the matrix $(\omega,\omega')$ of non-normalized periods.
Normalized period matrix is $(1_g,\tau)$, where $1_g$ denotes the identity matrix of size $g$, 
and $\tau = \omega^{-1}\omega'$. Matrix $\tau$ is symmetric with a positive imaginary part: 
$\tau^t=\tau$, $\Im \tau >0$,
that is $\tau$ belongs to the Siegel upper half-space. 
Normalised holomorphic differentials are defined  by
\begin{gather*}
 \rmd v = \omega^{-1} \rmd u,
\end{gather*}
and Abel's map $\mathcal{A}$ with respect to these differentials is
\begin{gather}\label{AbelM}
 \mathcal{A}(P) = \int_{\infty}^P \rmd v,\qquad P=(x,y)\in \mathcal{C}.
\end{gather}
Abel’s map of a positive divisor $\mathcal{D} =  \sum_{i =1}^n P_i$ on $\mathcal{C}$ is defined by
\begin{gather}\label{AbelMDiv}
 \mathcal{A}(\mathcal{D}) = \sum_{i =1}^n \int_{\infty}^{P_i} \rmd v.
\end{gather}
$\mathcal{A}$ maps a genus $g$ curve to its Jacobian variety 
$\Jac(\mathcal{C})=\Complex^g\backslash \mathfrak{P}$,
which is the quotient space of $\Complex^g$ by the lattice $\mathfrak{P}$ of periods.
The lattice is constructed from columns of normalised period matrix $(1_g,\tau)$.
We denote points of Jacobian by $v$ with coordinates $(v_1,v_2,\dots,v_g)^t$.
The map is one-to-one on the non-special locus of $g$-th symmetric power of the curve.

\subsection{Theta functions}
The Riemann theta function related to a curve $\mathcal{C}$ is defined
on its Jacobian $\Jac(\mathcal{C})$ by Fourier series
\begin{gather}\label{ThetaDef}
 \theta(v;\tau) = \sum_{n\in \Integer^g} \exp \big(\rmi \pi n^t \tau n + 2\rmi \pi n^t v\big),
\end{gather}
see for example \cite[p.\,118]{Mum}.

Values of theta function at half periods are of importance. 
Relations between these values are an objective of the paper. 
Therefore, it is suitable to introduce \emph{theta function}
 \emph{ with half-period characteristic} as 
in \cite[Def\;3 p.\,4]{RF1974}, namely
\begin{multline}\label{ThetaDefChar}
 \theta[\varepsilon](v;\tau) = \exp\big(\rmi \pi (\varepsilon'{}^t/2) \tau (\varepsilon'/2)
 + 2\rmi \pi  (v+\varepsilon/2)^t \varepsilon'/2\big) \times \\ \times \theta(v+\varepsilon/2 + \tau \varepsilon'/2;\tau),
\end{multline}
where $g$-component vectors $\varepsilon$ and $\varepsilon'$ consist of entries $0$ and $1$.
Characteristic $[\varepsilon]$ is a $2\times g$ matrix of the form
\begin{gather*}
 [\varepsilon] = \begin{bmatrix} \varepsilon'{}^t \\ 
\varepsilon^t \end{bmatrix}.
\end{gather*}
Each branch point $e$ of a hyperelliptic curve \eqref{Cg} is identified with a half-period
\begin{gather}\label{AbelMBranchP}
 \mathcal{A}(e) = \int_{\infty}^{e} \rmd v = \varepsilon/2 + \tau \varepsilon'/2,
\end{gather}
where vectors $\varepsilon$ and $\varepsilon'$ form characteristic $[\varepsilon]$,
see \cite[\S\,202 p.\,300-301]{bak897}.

One can add characteristics by the rule: $[\varepsilon]+[\delta] = ([\varepsilon]+[\delta]) \modR 2$.
A characteristic $[\varepsilon]$ is odd whenever $\varepsilon^t \varepsilon' \modR 2 = 1$, 
and even whenever $\varepsilon^t \varepsilon' \modR 2 = 0$. Theta function with characteristic
has the same parity as its characteristic.

\subsection{Characteristics in hyperelliptic case}\label{ss:CharHyper}
A method of consructing characteristics in the hyperelliptic case are employed from \cite[p.\,1012]{ER2008}.
It is based on the definition \eqref{AbelMBranchP} of a half-period characteristic  with the help of 
Abel's map \eqref{AbelM}.
We denote characteristics of branch points $e_k$ by $[\varepsilon_k]$;
and $[\varepsilon_{0}]=0$. Guided by the picture of canonical homology cycles, we obtain
\begin{align*}
 &\mathcal{A}(e_{2g+1}) = \mathcal{A}(e_{0}) - \sum_{k=1}^g \int_{e_{2k-1}}^{e_{2k}} \rmd v &
 &[\varepsilon_{2g+1}] = \big[ {}^{00\dots 00}_{11\dots 11} \big], & \\
 &\mathcal{A}(e_{2g}) = \mathcal{A}(e_{2g+1}) + \int_{e_{2g}}^{e_{2g+1}} \rmd v = &
 &[\varepsilon_{2g}] = \big[ {}^{00\dots 01}_{11\dots 11} \big], & \\
 &\mathcal{A}(e_{2g-1}) = \mathcal{A}(e_{2g}) + \int^{e_{2g}}_{e_{2g-1}} \rmd v&
 &[\varepsilon_{2g-1}] = \big[ {}^{00\dots 01}_{11\dots 10} \big], &
\intertext{for $k$ from $g-1$ to $2$}
 &\mathcal{A}(e_{2k}) = \mathcal{A}(e_{2k+1}) + \int_{e_{2k}}^{e_{2k+1}} \rmd v = &
 &[\varepsilon_{2k}] = \big[ \overbrace{{}^{00\dots 0}_{11\dots 1}}^{k-1}\!{}^{10 \dots 0}_{10 \dots 0} \big], & \\
 &\mathcal{A}(e_{2k-1}) = \mathcal{A}(e_{2k}) + \int^{e_{2k-1}}_{e_{2k}} \rmd v&
 &[\varepsilon_{2k-1}] = \big[ \overbrace{{}^{00\dots 0}_{11\dots 1}}^{k-1}\!{}^{10 \dots 0}_{00 \dots 0} \big], &
\intertext{and finally}
 &\mathcal{A}(e_{2}) = \mathcal{A}(e_{3}) + \int_{e_{2}}^{e_{3}} \rmd v &
 &[\varepsilon_{2}] = \big[ {}^{10\dots 0}_{10\dots 0} \big], & \\
 &\mathcal{A}(e_{1}) = \mathcal{A}(e_{2}) + \int^{e_{1}}_{e_{2}} \rmd v &
 &[\varepsilon_{1}] = \big[ {}^{10\dots 0}_{00\dots 0} \big]. &
 \end{align*}

Characteristic $[K]$ of the vector of Riemann constants equals 
the sum of all odd characteristics of branch points, there are $g$ such characteristics
according to \cite[\S\,200 p.\,297, \S\,202 p.\,301]{bak897}, and \cite[\S\,VII.1.2 p.\,305]{FK1980}.
Actually,
\begin{gather*}
 [K] = \sum_{k=1}^g [\varepsilon_{2k}].
\end{gather*}

\subsection{Characteristics and partitions}\label{ss:CharPart}
Let $\I\cup \J$ be a partition of the set of indices of all branch points $\{e_0,\, e_1,\dots,\, e_{2g+1}\}$ of a curve.
Denote by $[\varepsilon(\I)] = \sum_{i\in\I} [\varepsilon_i]$ the characteristic of 
\begin{gather}
  \mathcal{A}(\I) = \sum_{i\in\I}  \mathcal{A}(e_i) = \varepsilon(\I) /2 + \tau \varepsilon'(\I) /2.
\end{gather}

According to \cite[p.\,13]{fay973} (for more details see \cite[\S\,202 p.\,301]{bak897}) 
there is one-to-one correspondence between
$2^{2g}$ characteristics and $2^{2g}$ partitions $\I_\mFr \cup \J_\mFr$ with $\I_\mFr = \{i_1,\,\dots,\, i_{g+1-2\mFr}\}$
and $\J_\mFr = \{j_1,\,\dots,\, j_{g+1+2\mFr}\}$ of the set of $2g+2$ 
indices of branch points of a hyperelliptic curve, where $\mFr$ is between $0$ and $[(g+1)/2]$. 
Here $[z]$ means the integer part of $z$. 
Number $\mFr$ is called \emph{multiplicity}. Thus, characteristics of $2g+2$ branch points of 
a hyperelliptic curve \eqref{Cg} 
serve as a basis to construct
all $2^{2g}$ half-period characteristics.

The index of infinity is usually omitted, it belongs to the part $\I_\mFr$ or $\J_\mFr$ 
with the number of indices less than $g+1-2\mFr$ or $g+1+2\mFr$, respectively. 
Notation $\I_\mFr$ is used for the part of less cardinality.

Introduce also characteristic $[\I_\mFr] = [\varepsilon(\I_\mFr)] + [K]$ of $\mathcal{A}(\I_\mFr) + K$
which corresponds to a partition $\I_\mFr \cup \J_m$. Here $K$ denotes the vector of Riemann constants,
and $[K]$ its characteristic. Note that $[\J_\mFr]$ represents the same characteristic as $[\I_\mFr]$.
Characteristics $[\I_\mFr]$ of even multiplicity $\mFr$ are even, and of odd $\mFr$ are odd.
According to the Riemann vanishing theorem 
$\theta(v+\mathcal{A} (\I_\mFr)+K)$ vanishes to order~$\mFr$ at $v=0$, see \cite[p.\,13]{fay973}.
Characteristics of multiplicity $0$ are called non-singular even characteristics,
there are $\binom{2g+1}{g}$ such characteristics.
There exist $\binom{2g+2}{g-1}$ characteristics of multiplicity $1$, which are called non-singular odd.
The number of characteristics of multiplicity $\mFr > 1$ is $\binom{2g+2}{g+1-2\mFr}$.

Below characteristic $[K]$ is also denoted by $[\emptyset]$ since it corresponds
 to partition $\emptyset\cup\{1,2,\dots,2g+1\}$, 
which is always unique. At that  $\theta[K](v)$ vanishes to the maximal order $[(g+1)/2]$ at $v=0$, 
see for example \cite[\S\,VII.1.4 p.\,306]{FK1980}. 
The maximal order is greater than~$1$ if $g>2$.
If the genus $g$ of a curve is odd the mentioned partition contains the empty set: $\I_{(g+1)/2}=\emptyset$, 
because the omitted index of infinity belongs to $\J_{(g+1)/2}$.
It means that $\theta[K](v)$ is the only theta function with characteristic of the maximal multiplicity.
If $g$ is even, part $\I_{g/2}=\emptyset$ of the partition actually contains the omitted index of infinity.
In this case $\theta[K](v)$ is not the only theta function with characteristic of the maximal multiplicity,
there exist $2g+2$ such functions. Vanishing to the order equal to multiplicity 
is the distinctive property of hyperelliptic curves, see \cite[\S\,VII.1.5, 7--8 p.\,308--309]{FK1980}.

Characteristics of multiplicity $0$ are usually described by the partitions whose part~$\I_0$ 
contains $g$ non-zero indices and the omitted infinity index. 
Partitions corresponding to characteristics of multiplicity $1$ can be obtained from 
$\I_0\cup \J_0$ by moving two indices from $\I_0$ to $\J_0$. These two indices can be of finite points the both
or one index of a finite point and the index of infinity. 
The former case is described by dropping two indices from $\I_0$, 
so cardinality of the obtained $\I_1$ becomes $g-2$, then $\J_1$ contains $g+3$ indices
and the infinity index is located in $\I_1$. 
In the latter case only one index drops from $\I_0$,
so the cardinality of $\I_1$ is $g-1$, and the infinity index moves into $\J_1$ which has cardinality $g+2$.
(Note that computing cardinality of a set we always omit the zero index of infinity.)
The same occurs in higher multiplicities.
Regarding characteristics $[\I_\mFr]$, it is convenient to distinguish between  
ones with the index of infinity in part $\J_\mFr$ ($\card \I_\mFr = g+1-2\mFr$),
and ones with the index of infinity in part $\I_\mFr$ (cardinality of $\I_m$ is $g-2\mFr$). 

In what follows we also use notation $\B^{[r]}$ for a set of cardinality $r$.

\subsection{Notations}
In what follows $\partial_{v_i}$ stands for a partial derivative with respect to variable $v_i$, and 
argument $\tau$ of a theta function is usually omitted, so 
\begin{align*}
 &\partial_{v_i} \theta[\varepsilon](v) \equiv \frac{\partial}{\partial v_i}  \theta[\varepsilon](v;\tau),&\\
 &\partial^2_{v_i,v_j} \theta[\varepsilon](v) 
\equiv \frac{\partial^2}{\partial v_i\partial v_j}  \theta[\varepsilon](v;\tau),&\\
&\text{etc.}&
\end{align*}
The gradient of a theta function is denoted by $\partial_v \theta[\varepsilon](v)$.

The theta constant with characteristic $[\I_0]$ corresponding to a
partition $\I_0 \cup\J_0$ is denoted by $\theta[\I_0]$. 
The lowest non-vanishing derivative of $\theta[\I_\mFr](v;\tau)$ at $v=0$ 
is a tensor of order $\mFr$ denoted by $\partial_v^\mFr  \theta[\I_\mFr]$. Such a tensor 
consists of zero values of partial derivatives  $\partial_{v_{i_1}} \cdots \partial_{v_{i_\mFr}} \theta[\I_\mFr]$ 
of order $\mFr$ with respect to all combinations constructed from $g$ components of~$v$.
These zero values of derivatives are called  $\mFr$-th order theta derivatives, as mentioned above. 
 The following notation is used:
\begin{align*}
 &\theta[\I_0] \equiv \theta[\I_0](0;\tau),&\\
 &\partial_{v_i} \theta[\I_1] \equiv \frac{\partial}{\partial v_i}  \theta[\I_1](0;\tau),&\\
 &\partial^2_{v_i,v_j} \theta[\I_2] 
 \equiv \frac{\partial^2}{\partial v_i\partial v_j}  \theta[\I_2](0;\tau),&\\
&\text{etc.}&
\end{align*}
Another notation like $\theta^{\{i_1,i_2,i_3\}}$ is used for $\theta[\I]$ with $\I=\{i_1,i_2,i_3\}$
in the case when $\I$ is described by its elements.
Recall that theta constants are actually functions of normalised period matrix $\tau$ 
also called the Riemann period matrix.

Throughout the paper representation of characteristics in terms of partitions is used, 
because this makes clear which order of vanishing theta functions have at $v=0$.
Representation of characteristics in terms of matrices with entries $0$ and $1$, which is used in Appendices,
is derived from characteristics of branch points given in Subsection~\ref{ss:CharHyper}.

Let $\{e_i\mid i\in \mathcal{I}\}$ be a collection of branch points 
(here and below we use only $x$-coordinates to specify branch points)
corresponding to a partition $\I \cup \J$.
By $s_n(\I)$ an elementary symmetric polynomial of degree $n$ in  $\{e_i\mid i\in\I\}$
 is denoted, at that elementary symmetric polynomials $s_n$ are defined by
\begin{gather*}
\sum_{n\geqslant 0} t^n s_n = \prod_{i \in\I} (1+ e_i t).
\end{gather*}
And $\Delta(\I)$ denotes the Vandermonde determinant in  $\{e_i\mid i\in\I\}$, namely:
\begin{gather*}
 \Delta(\I) = \prod_{\substack{i > l \\ i,l \in \I}} (e_i-e_l).
\end{gather*}
By $\Delta$ the Vandermonde determinant over the set of all finite branch points of the curve
is denoted.

\subsection{First Thomae formula}
The first Thomae theorem and its corollaries below are given in the form proposed in \cite[p.\,1014]{ER2008}
\newtheorem*{FTteo}{First Thomae theorem}
\begin{FTteo}
 Let $\I_0\cup \J_0$ with $\I_0=\{i_1$, \ldots, $i_{g}\}$ 
 and $\J_0 = \{j_1$, \ldots, $j_{g+1}\}$  be a partition
 of the set $\{1,2,\dots,2g+1\}$ of indices of finite branch points
 of a hyperelliptic curve \eqref{Cg}, 
 and $[\I_0]$ denotes the non-singular even characteristic
 corresponding to $\mathcal{A} (\I_0) + K$. Then
 \begin{gather}\label{thomae1}
 \theta[\I_0] = \epsilon \bigg(\frac{\det \omega}{\pi^g}\bigg)^{1/2} \Delta(\I_0)^{1/4} \Delta(\J_0)^{1/4}.
\end{gather}
where $\epsilon$ satisfies $\epsilon^8=1$, then 
 $\Delta(\I_0)$ and $\Delta(\J_0)$ denote the Vandermonde determinants 
 built from $\{e_i\mid i\in \I_0\}$ and $\{e_j\mid j\in \J_0\}$. 
\end{FTteo}
For a proof see \cite[Proposition\;3.6, p.46]{fay973}.
\newtheorem{FTcor}{FTT Corollary}
\begin{FTcor}\label{C:eklm}
 Let $\I = \{i_1,\,\dots,\,i_{g-1}\}$ and $\J = \{j_1,\,\dots,\,j_{g-1}\}$ be two disjoint sets 
 picked out from  $2g+1$ indices of finite branch points of a hyperelliptic curve \eqref{Cg}, 
 let $k$, $m$, $n$ be the remaining indices.
 Then the following formula is valid
 \begin{gather}\label{eklm}
  \frac{e_{k}-e_{m}}{e_{k}-e_{n}}
  = \epsilon \frac{\theta[\{n\}\cup\I]^2 \theta[\{n\}\cup\J]^2}
  {\theta[\{m\}\cup\I]^2 \theta[\{m\}\cup\J]^2},
 \end{gather}
where $\epsilon^4=1$.
\end{FTcor}
\begin{FTcor}\label{C:eJI}
 Let $\I_0=\{i_1,\,\dots,\,i_{g}\}$ and $\J_0 = \{j_1,\,\dots,\,j_{g+1}\}$ form a partition of 
 $2g+1$  indices of finite branch points  of a hyperelliptic curve \eqref{Cg}.
 Choose $i_k$, $i_l \in \I_0$, and $j_n$, $j_m \in \J_0$. Then
 \begin{gather}\label{eJI}
  \frac{\prod_{j\in \J_0}(e_{i_k}-e_{j})}{(e_{i_k}-e_{i_l})^2\prod_{\substack{i\in \I_0 \\ i\neq i_k}}(e_{i_k}-e_{i})} 
  = \pm \frac{ \theta[\I_0^{(i_k\to j_n)}]^4 \theta[\I_0^{(i_k\to j_m)}]^4\theta[\J_0^{(j_n,j_m \to i_l)}]^4}
  {\theta[\I_0^{(i_k,i_l \to j_n,j_m)}\}]^4 \theta[\J_0^{(j_m)}]^4 \theta[\J_0^{(j_n)}]^4},
 \end{gather}
 where  $\J_0^{(j)}$ stands for  $\J_0 \backslash \{j\}$, 
 and $\I_0^{(i \to j)}$ denotes that index $i$ is replaced by $j$ in $\I_0$, 
the same refers to $\J_0^{(j\to l)}$.
\end{FTcor}
Note that the right hand side does not depend on the choice of $j_n$, $j_m \in \J_0$.

\subsection{Second Thomae formula}
The second Thomae theorem is also taken from \cite[p.\,1015]{ER2008}.
\newtheorem*{STteo}{Second Thomae theorem}
\begin{STteo}
 Let $\I_1\cup \J_1$ with $\I_1=\{i_1$, \ldots, $i_{g-1}\}$ and $\J_1 = \{j_1$, \ldots, $j_{g+2}\}$ 
 be a partition of the set $\{1,2,\dots,2g+1\}$ of indices of finite branch points of a hyperelliptic curve \eqref{Cg}, and
 $[\I_1]$ denote the non-singular odd characteristic  
 corresponding to $\mathcal{A} (\I_1) + K$. Then
 \begin{multline}\label{thomae2}
 \frac{\partial }{\partial v_n} \theta[\I_1](v)\big|_{v=0} \\
 = \epsilon \bigg(\frac{\det \omega}{\pi^g}\bigg)^{1/2} \Delta(\I_1)^{1/4} \Delta(\J_1)^{1/4} 
 \sum_{j=1}^g  (-1)^{j-1} s_{j-1}(\I_1) \omega_{j,n},
\end{multline}
where  $\epsilon$ satisfies $\epsilon^8=1$, then $\Delta(\I_1)$ and $\Delta(\J_1)$ denote 
the Vandermonde determinants 
 built from $\{e_i\mid i\in \I_1\}$ and $\{e_j\mid j\in \J_1\}$.
\end{STteo}
This result is nicely presented in the matrix form 
\begin{multline}\label{thomae2MF}
 \begin{pmatrix} \partial_{v_1} \\ \partial_{v_2}  \\ \vdots \\ \partial_{v_g}  \end{pmatrix} 
 \theta[\I_1] (v)\big|_{v=0} \\
 = \epsilon \bigg(\frac{\det \omega}{\pi^g}\bigg)^{1/2} \Delta(\I_1)^{1/4} \Delta(\J_1)^{1/4}
 \omega^t  \begin{pmatrix} s_0(\I_1) \\ - s_1(\I_1) \\ \vdots \\ (-1)^{g-1} s_{g-1}(\I_1) \end{pmatrix}.
\end{multline}

\subsection{General Thomae formula}
Proposed and proven in \cite{BerTF2020}.
\begin{teo}[The general Thomae theorem]\label{T:ThN}
 Let $\I_\mFr\cup \J_\mFr$ with $\I_\mFr = \{i_1$, \ldots, $i_{g+1-2\mFr}\}$ and 
 $\J_\mFr = \{j_1$, \ldots, $j_{g+1+2\mFr}\}$ 
 be a partition of the set $\{0,1,\dots,2g+1\}$ of indices of all branch points of a hyperelliptic curve \eqref{Cg}, and
 $[\I_\mFr]$ denotes the singular characteristic of multiplicity $\mFr$ 
 corresponding to $\mathcal{A} (\I_\mFr) + K$. 
  Then for any $n_1$, \ldots, $n_\mFr\in \{1,\dots,g\}$ and any set $\K \subset \J_\mFr$ of cardinality
 $\kFr = 2\mFr-1$ or $2\mFr$ the following relation holds
 \begin{multline}\label{thomaeN}
 \frac{\partial }{\partial v_{n_1}} \cdots 
 \frac{\partial }{\partial v_{n_\mFr}} \theta[\I_\mFr](v)\big|_{v=0} 
 = \epsilon \bigg(\frac{\det \omega}{\pi^g}\bigg)^{1/2} \Delta(\I_\mFr)^{1/4} \Delta(\J_\mFr)^{1/4} \times \\ \times
  \sum_{\substack{p_1,\dots,p_\mFr \in \K \\ \text{all different}}}
 \prod_{i=1}^\mFr \frac{\sum_{j=1}^g  (-1)^{j-1} s_{j-1}(\I_\mFr \cup \K^{(p_i)})\omega_{jn_i}}
 {\prod_{k\in\K \backslash\{p_1,\dots,p_\mFr \}} (e_{p_i} - e_k)},
\end{multline}
where $\K^{(p_i)}=\K\backslash \{p_i\}$,  $\epsilon$ satisfies $\epsilon^8=1$, then
$\Delta(\I_\mFr)$ and $\Delta(\J_\mFr)$ denote the Vandermonde determinants 
 built from $\{e_i\mid i\in \I_\mFr\}$ and $\{e_j\mid j\in \J_\mFr\}$, and $s_{j}(\I)$
denotes the elementary symmetric polynomial of degree $j$ in $\{e_i\mid i\in \I\}$
at that index $0$ of infinity is omitted when it occurs in sets.
The relation does not depend on the choice of $\K$.
\end{teo}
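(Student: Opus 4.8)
The plan is to reduce the statement to identifying the leading (degree-$\mFr$) term of the Taylor expansion of $\theta[\I_\mFr](v)$ at $v=0$, and then to assemble that leading term from the Second Thomae formula \eqref{thomae2}, with First Thomae and its corollaries controlling the prefactors. By Riemann's vanishing theorem, recalled in subsection~\ref{ss:CharPart}, the function $\theta[\I_\mFr](v)$ vanishes to order exactly $\mFr$ at the origin, so its first non-vanishing Taylor coefficient is the symmetric tensor with entries $\partial_{v_{n_1}}\cdots\partial_{v_{n_\mFr}}\theta[\I_\mFr]$. Thus the whole content of the theorem is the explicit form of this \emph{tangent cone}, a homogeneous polynomial of degree $\mFr$ on $\Complex^g$, and everything reduces to pinning it down.

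The central structural step is to show that this tangent cone factors through the non-singular odd characteristics $[\I_\mFr\cup\K^{(p)}]$, $p\in\K$, where $\K^{(p)}=\K\setminus\{p\}$. I would work first with $\card\K=2\mFr-1$, so that $\card\K^{(p)}=2\mFr-2$ and $\card(\I_\mFr\cup\K^{(p)})=g-1$; then $[\I_\mFr\cup\K^{(p)}]$ is precisely a non-singular odd characteristic of the type governed by \eqref{thomae2}, and the gradient of the corresponding theta function is exactly the linear form $\sum_{j=1}^g (-1)^{j-1} s_{j-1}(\I_\mFr\cup\K^{(p)})\omega_{j,n}$ appearing in the statement (the case $\card\K=2\mFr$ will then follow from the $\K$-independence proved at the end, or by the same computation with one more residue). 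I would establish the factorisation by the Riemann--Kempf description of the singularity of the theta divisor at a point of multiplicity $\mFr$, where the tangent cone is a symmetric $\mFr$-linear form built from the sections realising $h^0=\mFr$; in the hyperelliptic setting these sections are indexed by the branch points collected in $\K$. Equivalently, and this is the route I would actually follow to obtain the precise coefficients, I would represent the $\mFr$-fold derivative by a multiple residue over the curve: the Lagrange weights $1/\prod_{k\in\K\setminus\{p_1,\dots,p_\mFr\}}(e_{p_i}-e_k)$ are exactly what is generated when such an integral is evaluated as a sum of residues at the branch points $e_p$, $p\in\K$, and the sum over ordered distinct tuples $(p_1,\dots,p_\mFr)$ is the permanent-type expansion of the resulting $\mFr$-fold residue.

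With the factorisation in hand I would substitute \eqref{thomae2} for each gradient and do the bookkeeping of prefactors. Each Second Thomae gradient for $[\I_\mFr\cup\K^{(p)}]$ carries its own factor $(\det\omega/\pi^g)^{1/2}\Delta(\I_\mFr\cup\K^{(p)})^{1/4}\Delta(\J_\mFr\setminus\K^{(p)})^{1/4}$, and the task is to show that after multiplying the $\mFr$ factors and dividing by the Lagrange denominators everything collapses to the single prefactor $\epsilon(\det\omega/\pi^g)^{1/2}\Delta(\I_\mFr)^{1/4}\Delta(\J_\mFr)^{1/4}$. This reconciliation is a computation with Vandermonde determinants and the polynomials $s_n$, controlled by the ratio identities of FTT Corollaries~\ref{C:eklm} and~\ref{C:eJI}, which convert the excess quarter-power Vandermonde factors into the products $\prod(e_{p_i}-e_k)$ that cancel the denominators. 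The residue representation also yields the final clause: independence of $\K$ holds because enlarging or shrinking $\K$ amounts to moving the integration contour across points of $\J_\mFr$ at which the integrand is regular, so the total residue is unchanged.

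The hardest part will be the central structural step, namely pinning down the tangent-cone factorisation with its exact numerical coefficients rather than merely up to scale. Riemann--Kempf supplies the factorisation only up to a normalising constant, and matching that constant, together with the eighth-root ambiguity $\epsilon^8=1$ and the quarter-powers of Vandermonde determinants, is what forces the careful residue evaluation and the consistency provided by First Thomae. I would deliberately keep the residue formulation throughout, precisely so that the $\K$-independence is structural rather than an opaque identity among the symmetric polynomials $s_{j-1}(\I_\mFr\cup\K^{(p)})$, and so that the bookkeeping of the infinity index (whether it lies in $\I_\mFr$, in $\J_\mFr$, or in $\K$, as discussed in subsection~\ref{ss:CharPart}) can be tracked uniformly across the two admissible cardinalities of $\K$.
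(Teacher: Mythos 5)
First, a point about the comparison itself: this paper never proves Theorem~\ref{T:ThN}. The subsection containing it opens with ``Proposed and proven in \cite{BerTF2020}'', and the theorem functions here purely as an imported input from which the relations of Section~\ref{s:ThetaRels} are then derived (together with second Thomae). So your proposal cannot be matched against an in-paper argument; it can only be judged on its own merits.

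On its own merits it has a genuine gap, and it sits exactly where you yourself locate the ``hardest part''. Your plan is: (i) the $\mFr$-th Taylor coefficient of $\theta[\I_\mFr]$ factors through the linear forms attached to the odd non-singular characteristics $[\I_\mFr\cup\K^{(p)}]$, (ii) the Lagrange weights $1/\prod_{k}(e_{p_i}-e_k)$ and the sum over distinct tuples arise from evaluating a multiple residue, and (iii) the overall constant and the $\K$-independence are then fixed by First Thomae and contour deformation. But steps (i)--(ii) \emph{are} the theorem: once one knows that $\partial_{v_{n_1}}\cdots\partial_{v_{n_\mFr}}\theta[\I_\mFr]$ equals a sum over distinct tuples of products of the forms $\sum_{j}(-1)^{j-1}s_{j-1}(\I_\mFr\cup\K^{(p_i)})\,\omega_{jn_i}$ with precisely those denominators, formula \eqref{thomaeN} follows by collecting constants; and conversely, this decomposition into second-Thomae gradients is exactly what the paper later extracts \emph{from} the theorem (cf.\ \eqref{thomae3Grad}), so assuming it begs the question. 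Riemann--Kempf does not supply it: it exhibits the tangent cone as a determinant of a matrix of linear forms, canonical only up to the choice of bases (hence up to a constant), with entries that are not identified with second-Thomae gradients, and it yields neither the Lagrange-type weights nor the assertion that the answer is independent of the auxiliary set $\K$. The residue representation that is supposed to do all of this work --- produce the weights, the tuple sum, the exact prefactor $\epsilon\,(\det\omega/\pi^g)^{1/2}\Delta(\I_\mFr)^{1/4}\Delta(\J_\mFr)^{1/4}$, and the $\K$-independence --- is never written down: you do not say which integrand, over which cycles or contours, represents the $\mFr$-fold derivative. Every subsequent step (the Vandermonde bookkeeping via FTT Corollaries~\ref{C:eklm} and~\ref{C:eJI}, the contour argument for $\K$-independence, the reduction of the case $\card\K=2\mFr$ to $\card\K=2\mFr-1$) quotes this missing object. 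Note also that the two admissible cardinalities of $\K$ are not related by ``one more residue'': they differ by whether the infinity index lies in $\I_\mFr\cup\K$, which is what makes each $\I_\mFr\cup\K^{(p_i)}$, after omitting infinity, a $(g-1)$-element set to which \eqref{thomae2} applies. As it stands the proposal is a plausible research plan, not a proof.
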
 
\begin{lemma}\label{L:GenThomaeF}
Under the assumptions of the general Thomae theorem (Theorem~\ref{T:ThN}) with $\I_0=\I_\mFr \cup \K$ the following holds
\begin{multline}\label{thomaeNR}
 \frac{\partial_{v_{n_1}}\cdots \partial_{v_{n_\mFr}} \theta[\I_\mFr]}{\theta[\I_0]} 
 = \epsilon \prod_{\kappa \in \K} \frac{\big(\prod_{j\in\J_0}(e_\kappa - e_j)\big)^{1/4}}
 {\big(\prod_{\iota\in \I_\mFr} (e_\kappa-e_\iota)\big)^{1/4}} \times \\ \times
  \sum_{\substack{p_1,\dots,p_\mFr \in \K \\ \text{all different}}}
 \prod_{i=1}^\mFr \frac{\sum_{j=1}^g  (-1)^{j-1} s_{j-1}(\I_\mFr \cup \K^{(p_i)})\omega_{jn_i}}
 {\prod_{k\in\K \backslash\{p_1,\dots,p_\mFr \}} (e_{p_i} - e_k)}.
\end{multline}
\end{lemma}
\begin{proof}
Formula \eqref{thomaeNR} follows immediately from \eqref{thomaeN} 
and the first Thomae formula in the form
\begin{gather*}
 \bigg(\frac{\det \omega}{\pi^g}\bigg)^{1/2} \Delta(\I_\mFr)^{1/4} \Delta(\J_\mFr)^{1/4}  =
 \epsilon \prod_{\kappa \in \K} \frac{\big(\prod_{j\in\J_0}(e_\kappa - e_j)\big)^{1/4}}
 {\big(\prod_{\iota\in \I_\mFr} (e_\kappa-e_\iota)\big)^{1/4}} \theta[\I_0],
\end{gather*}
where $\I_\mFr = \I_0\backslash \K$ and $\J_\mFr = \J_0\cup \K$, and the following relations are applied
\begin{align*}
 &\Delta(\I_0) = \Delta(\K) \Delta(\I_\mFr) 
\prod_{\kappa \in \K} \prod_{\iota\in\I_\mFr}(e_\kappa - e_\iota),\\
 &\Delta(\J_\mFr) = \Delta(\K) \Delta(\J_0) 
\prod_{\kappa \in \K} \prod_{j \in\J_0}(e_\kappa - e_j).
\end{align*}
\end{proof}

Also recall the following Theorem proven in \cite{BerTF2020}.
\begin{teo}\label{T:DetD2theta}
For hyperelliptic curves of genera $g\geqslant 3$
rank of every matrix of second order theta derivativets equals three, that is 
\begin{gather*}
 \rank \big(\partial^2_v \theta[\I_2] \big) = 3.
\end{gather*}
Therefore,  $\det\big(\partial^2_v \theta[\I_2]\big)=0$ in genera $g>3$.
\end{teo}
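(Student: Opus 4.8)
The plan is to read the rank directly off the general Thomae formula \eqref{thomaeN}, specialised to multiplicity $\mFr=2$, where the lowest non-vanishing derivative is the Hessian $\partial_v^2\theta[\I_2]=\big(\partial^2_{v_i,v_j}\theta[\I_2]\big)$, a symmetric $g\times g$ matrix. Since the right-hand side of \eqref{thomaeN} does not depend on the auxiliary set $\K\subset\J_2$, I would take $\K=\{q_1,q_2,q_3\}$ of the minimal admissible cardinality $\kFr=2\mFr-1=3$, consisting of indices of finite branch points (there are at least $g+4$ of them in $\J_2$). For such $\K$ the set $\K\setminus\{p_1,p_2\}$ shrinks to a single index whenever $p_1\neq p_2$, so \eqref{thomaeN} becomes a bilinear expression in the three vectors $w_{q_1},w_{q_2},w_{q_3}\in\Complex^g$ with components $w_{p,n}=\sum_{j=1}^g(-1)^{j-1}s_{j-1}(\I_2\cup\K^{(p)})\,\omega_{jn}$, i.e. $w_p=\omega^t\sigma_p$ with $(\sigma_p)_j=(-1)^{j-1}s_{j-1}(\I_2\cup\K^{(p)})$. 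Collecting these into the $g\times 3$ matrix $W=(w_{q_1},w_{q_2},w_{q_3})$, I expect the formula to organise into
\[
 \partial_v^2\theta[\I_2]=c\,W M W^t,\qquad c=\epsilon\Big(\tfrac{\det\omega}{\pi^g}\Big)^{1/2}\Delta(\I_2)^{1/4}\Delta(\J_2)^{1/4}\neq 0,
\]
where $M$ is the $3\times3$ symmetric matrix with zero diagonal and off-diagonal entries $M_{q_1q_2}=\big((e_{q_1}-e_{q_3})(e_{q_2}-e_{q_3})\big)^{-1}$ and cyclically. Because $W$ and $M$ carry only three columns, $\rank(W M W^t)\leqslant 3$ is immediate, giving the upper bound.

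For the lower bound I would verify that both factors have full rank $3$. The matrix $M$ is the easy half: a direct expansion gives $\det M=2\,M_{q_1q_2}M_{q_1q_3}M_{q_2q_3}$, which is nonzero since the branch points are distinct, so $M$ is invertible.

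The substantive step, and the main obstacle, is the linear independence of $w_{q_1},w_{q_2},w_{q_3}$. As $\det\omega\neq 0$, this reduces to independence of $\sigma_{q_1},\sigma_{q_2},\sigma_{q_3}$. I would identify $\sigma_p$ with the coefficient vector of the monic degree-$(g-1)$ polynomial $P_p(x)=\prod_{e\in\I_2\cup\K\setminus\{p\}}(x-e)$. The three polynomials $P_{q_1},P_{q_2},P_{q_3}$ share the common factor $\prod_{e\in\I_2}(x-e)$, and stripping it off leaves the three quadratics $(x-e_{q_b})(x-e_{q_c})$ obtained by dropping one index of $\K$. Multiplication by the fixed nonzero polynomial $\prod_{e\in\I_2}(x-e)$ is an injective linear map on coefficient spaces, so independence of the $P_{q_i}$ is equivalent to independence of the three quadratics; the latter is a $3\times3$ determinant of Vandermonde type, equal up to sign to $(e_{q_1}-e_{q_2})(e_{q_1}-e_{q_3})(e_{q_2}-e_{q_3})$, hence nonzero. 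Thus $W$ has full column rank $3$.

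With $W$ of full column rank and $M$ invertible, $\rank(c\,W M W^t)=3$, so $\rank\big(\partial_v^2\theta[\I_2]\big)=3$ for every multiplicity-$2$ characteristic. The hypothesis $g\geqslant 3$ is used precisely to ensure $\card\I_2=g-3\geqslant 0$, so that such characteristics exist and $\J_2$ supplies the three finite indices for $\K$. Finally, $\partial_v^2\theta[\I_2]$ is $g\times g$, so for $g>3$ its rank $3<g$ forces $\det\big(\partial_v^2\theta[\I_2]\big)=0$, which is the stated consequence.
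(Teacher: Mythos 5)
Your proof is correct, and it is worth noting that the paper itself never proves Theorem~\ref{T:DetD2theta} in the text: the statement is recalled from \cite{BerTF2020}, and the only in-paper justification is the remark following Proposition~\ref{P:RelG}. That remark argues through the theta-constant representation of Theorem~\ref{T:D2thetaGradRepr} with four dropped indices, $\partial^2_v\theta[\I_2]=\theta[\I_0]^{-1}\,\partial_v\theta[\I_1]^t\hat{R}\,\partial_v\theta[\I_1]$: there $\det\hat{R}=0$ (established via the identity $a_1-a_2+a_3=0$), the nonvanishing of all $3\times3$ minors of $\hat{R}$ is asserted by direct computation, and rank $3$ is transferred to the Hessian by Cauchy--Binet, which moreover tacitly requires the four gradient rows to be linearly independent. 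Your route is the branch-point avatar of the same congruence decomposition, but with the decisive simplification of taking $\K$ of minimal cardinality $3$ in \eqref{thomaeN}: by the second Thomae formula \eqref{thomae2MF} your columns $w_p=\omega^t\sigma_p$ are proportional to the gradients $\partial_v\theta[\I_2\cup\K^{(p)}]$, so $c\,WMW^t$ is the $\kFr=3$ analogue of the paper's $G^t\hat{R}G$; the gain is that your middle matrix is invertible ($\det M=2M_{q_1q_2}M_{q_1q_3}M_{q_2q_3}\neq 0$), so no minors and no Cauchy--Binet are needed, and the full column rank of $W$ is proved by an explicit elementary computation (strip the common factor $\prod_{\iota\in\I_2}(x-e_\iota)$ and use the Vandermonde-type independence of the three quadratics) instead of being imported from the linear-independence results of subsection~\ref{ss:FDT}. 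Your argument is therefore self-contained modulo Theorem~\ref{T:ThN} and the nonsingularity of $\omega$, treats all multiplicity-$2$ characteristics uniformly (both three and four dropped indices), and delivers the upper and lower rank bounds in one stroke. One caveat you should make explicit: when the index of infinity lies in $\I_2$, the finite part of $\I_2$ has cardinality $g-4$ and your polynomials $P_p$ have degree $g-2$ rather than $g-1$; the argument goes through verbatim, but the paper's convention of silently inferring the infinity index makes this case easy to overlook.
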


\begin{rem}\label{R:eOrd}
 We assume that branch points in all factors $(e_i-e_l)$ are ordered in such a way that $i>l$, 
 we call this the \emph{normal order}.
 Such an order was introduced by Baker \cite[p.\,307, 346]{bak898}, in the case of real values of $e_k$ 
 all factors are guaranteed to be positive.
 This allows to avoid the multiplier $\epsilon$ which arises in Thomae formulas and their corollaries  since 
  $\epsilon$ is the same for all relations in the class of a fixed multiplicity.
 
The same is true for curves with branch points not necessary real, but arbitrary complex. Once a homology
basis as described in Subsection~\ref{ss:HypC} is chosen, the normal order of branch points is defined.
Note that  period matrices reflect this order. 

In fact, the first Thomae formula displays the same multiplier $\epsilon=1$ 
for all partitions $\I_0\cup \J_0$  if the arguments of $\Delta(\I_0)^{1/4}$ and $\Delta(\J_0)^{1/4}$
are computed as follows
\begin{equation}\label{argCalc}
\arg \Delta(\I)^{1/4} = \frac{1}{4} \sum_{\substack{i > l \\ i,l \in \I}} \arg (e_i-e_l),
\end{equation}
where $\I$ is $\I_0$ or $\J_0$, the normal order is applied, and the range of $\arg$ is
$(-\pi,\pi]$. The normal order leads to the same 
8-th root of unity  in  FTT Corollary~\ref{C:eklm} if one takes the square root from both sides of the equality,
and FTT Corollary~\ref{C:eJI} if one takes the 4th root from both sides of the equality.

To underline the fact that elements $\{e_k\}$ are normally ordered the operator of order $\Ord$ is introduced.
Then the first Thomae formula and FTT Corollaries 1 and 2 read as
 \begin{gather}\tag{\ref{thomae1}'}\label{thomae1Ord}
 \theta[\I_0] =  \bigg(\frac{\det \omega}{\pi^g}\bigg)^{1/2} 
 \Ord \Delta(\I_0)^{1/4} \Ord\Delta(\J_0)^{1/4},
\end{gather}
\begin{gather}\tag{\ref{eklm}'}\label{eklmOrd}
   \bigg(\Ord \frac{e_{k}-e_{m}}{e_{k}-e_{n}}\bigg)^{1/2}
  =  \frac{\theta[\{n\}\cup\I] \theta[\{n\}\cup\J]}
  {\theta[\{m\}\cup\I] \theta[\{m\}\cup\J]},
 \end{gather}
 \begin{gather}\tag{\ref{eJI}'}\label{eJIOrd}
   \bigg( \Ord \frac{\prod_{j\in \J_0}(e_{i_k}-e_{j})}
  {(e_{i_k}-e_{i_l})^2\prod_{\substack{i\in \I_0 \\ i\neq i_k}}(e_{i_k}-e_{i})} \bigg)^{1/4}
  =  \frac{ \theta[\I_0^{(i_k\to j_n)}] \theta[\I_0^{(i_k\to j_m)}] \theta[\J_0^{(j_n,j_m \to i_l)}]}
  {\theta[\I_0^{(i_k,i_l \to j_n,j_m)}\}] \theta[\J_0^{(j_m)}] \theta[\J_0^{(j_n)}]}.
 \end{gather}
 
The second Thomae formula also displays the same multiplier for all partitions $\I_1\cup \J_1$,
namely: $\epsilon=1$ if the genus $g$ of a curve is even, and 
$\epsilon=-1$ if $g$ is odd, at that
 expression \eqref{argCalc} is used for computing arguments of
$\Delta(\I_1)^{1/4}$ and $\Delta(\J_1)^{1/4}$. That is, the second Thomae formula acquires the form
\begin{multline}\tag{\ref{thomae2}'}\label{thomae2Ord}
 \frac{\partial }{\partial v_n} \theta[\I_1](v)\big|_{v=0} 
 = (-1)^g \bigg(\frac{\det \omega}{\pi^g}\bigg)^{1/2} \Ord \Delta(\I_1)^{1/4} \Ord \Delta(\J_1)^{1/4} 
 \times \\ \times
 \sum_{j=1}^g  (-1)^{j-1} s_{j-1}(\I_1) \omega_{j,n}.
\end{multline}

The general Thomae formula acquires the form  
\begin{multline}\tag{\ref{thomaeN}'}\label{thomaeNOrd} 
 \frac{\partial }{\partial v_{n_1}} \cdots 
 \frac{\partial }{\partial v_{n_\mFr}} \theta[\I_\mFr](v)\big|_{v=0} 
 = \epsilon \bigg(\frac{\det \omega}{\pi^g}\bigg)^{1/2}  \Ord \Delta(\I_1)^{1/4} \Ord \Delta(\J_1)^{1/4} 
 \times \\ \times
  \sum_{\substack{p_1,\dots,p_\mFr \in \K \\ \text{all different}}}
 \prod_{i=1}^\mFr \frac{\sum_{j=1}^g  (-1)^{j-1} s_{j-1}(\I_\mFr \cup \K^{(p_i)})\omega_{jn_i}}
 {\prod_{k\in\K \backslash\{p_1,\dots,p_\mFr \}} (e_{p_i} - e_k)}.
\end{multline}
Note that $\Ord$ is not applied to factors $(e_{p_i} - e_k)$ in the denominator under the product. 
The general Thomae formula displays the same multiplier $\epsilon$ in each class of relations 
with a fixed multiplicity. Actually, $\epsilon=-1$ at multiplicity $\mFr=2$, and $\epsilon=-(-1)^g$
at $\mFr=3$. As a conjecture, $\epsilon=(-1)^{g \mFr + [\mFr/2]}$ at multiplicity $\mFr$.
\end{rem}

\subsection{Verification}
Formulas \eqref{thomae1Ord}--\eqref{thomae2Ord} as well as
all the relations presented below 
had been verified by direct computation of the left and right hand sides
for all possible partitions.
Curves with real and complex branch points had been used.
Period matrices $\omega$, $\omega'$ had been computed explicitly, as well as matrix $\tau$
for each curve. Hyperelliptic curves of genera $3$, $4$, $5$ and $6$
(four curves with different sets of branch points in each genus)  had been taken.

\section{Generalization of Jacobi's formula}\label{s:ThetaRels}
This section consists of three subsections, corresponding to the cases of multiplicities $1$, $2$ and $3$.
In the case of multiplicity $1$ relations between gradient vectors $\partial_v \theta[\I_1]$ 
with different characteristics are found (Proposition~\ref{P:thomaeK2}), 
and the question about rank of a collection of such vectors is elucidated
(Propositions~\ref{P:D1thetaLD3}, \ref{P:D1thetaLD4}, Conjecture~\ref{C:D1thetaLDH}, and Theorem~\ref{T:D1thetaLD}).
Then a representation of second order theta derivatives $\partial^2_v \theta[\I_2]$ (the Hesse matrix)
in terms of theta constants and first order theta derivatives is proposed (Theorem~\ref{T:D2thetaGradRepr}).
A similar result is obtained in the case of multiplicity~$3$ (Theorem~\ref{T:D3thetaGradRepr}),
and can be generalised to higher multiplicities (Conjecture~\ref{C:DNthetaGradRepr}).

\subsection{First order theta derivatives}\label{ss:FDT}
\begin{prop}\label{P:thomaeK2}
 Let $\I_0\cup \J_0$ with $\I_0 =\{i_1$, \ldots, $i_{g}\}$ and
 $\J_0 = \{j_1$, \ldots, $j_{g+1}\}$ 
 be a partition of the set $\{1$, $2$, $\dots$, $2g+1\}$ of indices of finite branch points of a genus $g$ hyperelliptic curve.  
 Then with arbitrary $\kappa_1$, $\kappa_2 \in \I_0$, $\kappa_1<\kappa_2$ and any $j_m$, $j_k\in\J_0$
 \begin{multline}\label{ThGInf}
 \partial_{v_n} \theta[\I_0 \backslash \{\kappa_1,\kappa_2\}] 
 = \big(\theta[\I_0^{(\kappa_1,\kappa_2 \to j_m,j_k)}]\theta[\J_0^{(j_m)}]\theta[\J_0^{(j_k)}]\big)^{-1} \times \\
 \times \Big( \theta[\I_0^{(\kappa_1\to j_m)}]\theta[\I_0^{(\kappa_1\to j_k)}]\theta[\J_0^{(j_m,j_k \to \kappa_2)}]
 \partial_{v_n} \theta[\I_0^{(\kappa_2)}] \\
 - \theta[\I_0^{(\kappa_2\to j_m)}]\theta[\I_0^{(\kappa_2\to j_k)}]\theta[\J_0^{(j_m,j_k \to \kappa_1)}]
 \partial_{v_n} \theta[\I_0^{(\kappa_1)}] \Big),
\end{multline}
where $\I_0^{(\kappa_1,\kappa_2 \to j_m,j_k)}$ denotes the set $\I_0$ with indices $\kappa_1$, $\kappa_2$
replaced by $j_m$, $j_k$ etc.
The equality does not depend on the choice of $j_m$, $j_k$.
\end{prop}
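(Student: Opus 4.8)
The plan is to reduce both sides to the branch-point and period data supplied by the Thomae formulas, so that \eqref{ThGInf} becomes an algebraic identity among Vandermonde determinants that is already recorded in Corollary~\ref{C:eJI}. Write $\I = \I_0\backslash\{\kappa_1,\kappa_2\}$; the characteristic $[\I]$ has multiplicity one with the omitted infinity index sitting in its small part ($\card \I = g-2$), whereas $[\I_0^{(\kappa_1)}]$ and $[\I_0^{(\kappa_2)}]$ are the multiplicity-one characteristics obtained by deleting a single index, so that infinity lies in their large part. The left-hand side is thus governed by the general Thomae formula (Theorem~\ref{T:ThN}) with $\mFr=1$, while the two gradients on the right are governed by the second Thomae formula \eqref{thomae2}. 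Abbreviate $S(\M) = \sum_{j=1}^g (-1)^{j-1} s_{j-1}(\M)\,\omega_{jn}$ and $C = (\det\omega/\pi^g)^{1/2}$, letting $\epsilon$ stand for various eighth roots of unity.

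First I would apply Theorem~\ref{T:ThN} to $\partial_{v_n}\theta[\I]$ with the admissible set $\K=\{\kappa_1,\kappa_2\}\subset\J_1$, which has the required cardinality $\kFr = 2\mFr = 2$. Since $\mFr=1$ the outer sum runs over a single index $p_1\in\K$, and $\K^{(p_1)}$ is a singleton, so the two terms carry denominators $e_{\kappa_1}-e_{\kappa_2}$ and $e_{\kappa_2}-e_{\kappa_1}$. Using the finite-index conventions for $s_{j-1}$ and $\Delta$, and the identifications $\I\cup\{\kappa_2\}=\I_0^{(\kappa_1)}$, $\I\cup\{\kappa_1\}=\I_0^{(\kappa_2)}$, this reads
\begin{gather*}
\partial_{v_n}\theta[\I] = \epsilon\, C\,\Delta(\I)^{1/4}\Delta(\J_0\cup\{\kappa_1,\kappa_2\})^{1/4}\left(\frac{S(\I_0^{(\kappa_1)})}{e_{\kappa_1}-e_{\kappa_2}}+\frac{S(\I_0^{(\kappa_2)})}{e_{\kappa_2}-e_{\kappa_1}}\right).
\end{gather*}

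Next I would invert \eqref{thomae2} for each of $\partial_{v_n}\theta[\I_0^{(\kappa_1)}]$ and $\partial_{v_n}\theta[\I_0^{(\kappa_2)}]$, whose complementary sets are $\J_0\cup\{\kappa_1\}$ and $\J_0\cup\{\kappa_2\}$, to solve for the period sums, e.g. $S(\I_0^{(\kappa_1)}) = \partial_{v_n}\theta[\I_0^{(\kappa_1)}]/(\epsilon\,C\,\Delta(\I_0^{(\kappa_1)})^{1/4}\Delta(\J_0\cup\{\kappa_1\})^{1/4})$, and likewise for $\kappa_2$. Substituting these into the display cancels the factor $C$ and leaves $\partial_{v_n}\theta[\I]$ as a linear combination of $\partial_{v_n}\theta[\I_0^{(\kappa_1)}]$ and $\partial_{v_n}\theta[\I_0^{(\kappa_2)}]$ whose coefficients are pure Vandermonde ratios (times a residual root of unity), the coefficient of $\partial_{v_n}\theta[\I_0^{(\kappa_2)}]$ being
\begin{gather*}
\frac{1}{e_{\kappa_2}-e_{\kappa_1}}\cdot\frac{\Delta(\I)^{1/4}\Delta(\J_0\cup\{\kappa_1,\kappa_2\})^{1/4}}{\Delta(\I_0^{(\kappa_2)})^{1/4}\Delta(\J_0\cup\{\kappa_2\})^{1/4}}.
\end{gather*}

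The final step is to recognise these Vandermonde ratios as products of theta constants. Factoring $\Delta(\I_0^{(\kappa_2)})=\Delta(\I\cup\{\kappa_1\})$ and $\Delta(\J_0\cup\{\kappa_1,\kappa_2\})$ over the adjoined index $\kappa_1$ and cancelling $\Delta(\I)$ reduces the fourth power of the coefficient above to exactly $\prod_{j\in\J_0}(e_{\kappa_1}-e_j)/((e_{\kappa_1}-e_{\kappa_2})^2\prod_{i\in\I_0,\,i\neq\kappa_1}(e_{\kappa_1}-e_i))$, which is the left-hand side of \eqref{eJI} with $i_k=\kappa_1$, $i_l=\kappa_2$. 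Corollary~\ref{C:eJI} then rewrites it as precisely the quotient of theta constants multiplying $\partial_{v_n}\theta[\I_0^{(\kappa_2)}]$ in \eqref{ThGInf}; the coefficient of $\partial_{v_n}\theta[\I_0^{(\kappa_1)}]$ is treated the same way by exchanging the roles of $\kappa_1$ and $\kappa_2$, and independence of the choice of $j_m$, $j_n$ is inherited from the corresponding property of \eqref{eJI}. I expect the only delicate point to be the bookkeeping of the roots of unity $\epsilon$ and of the signs produced by $e_{\kappa_1}-e_{\kappa_2}$ versus $e_{\kappa_2}-e_{\kappa_1}$: these must be pinned down through the right-ordering convention of Remark~\ref{R:eOrd} so as to reproduce exactly the relative minus sign between the two terms of \eqref{ThGInf}.
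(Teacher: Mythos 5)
Your proposal is correct and follows essentially the same route as the paper's proof: expand $\partial_{v_n}\theta[\I_0\backslash\{\kappa_1,\kappa_2\}]$ via the general Thomae formula with $\K=\{\kappa_1,\kappa_2\}$, eliminate the period sums $\sum_j(-1)^{j-1}s_{j-1}(\I_0^{(p)})\omega_{jn}$ by inverting the second Thomae formula, and convert the resulting Vandermonde ratios into theta-constant quotients via FTT Corollary~\ref{C:eJI}. The only cosmetic difference is that the paper works with ratios against $\theta[\I_0]$ (using the pre-packaged forms \eqref{thomaeNR} and \eqref{Th2Inv}) whereas you cancel the common factor $(\det\omega/\pi^g)^{1/2}$ explicitly, with the same reliance on the right-ordering convention to fix signs.
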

\begin{proof}
Formula \eqref{thomaeNR} from Lemma~\ref{L:GenThomaeF}  with $\mFr=1$ and $\card \K =2$ gives
\begin{multline}\label{thomaeK2R}
\frac{\partial_{v_{n}} \theta [\I_0 \backslash \{\kappa_1,\kappa_2\}]}  {\theta[\I_0]}  
  = \epsilon \frac{\big(\prod_{j\in \J_0} (e_{\kappa_1}-e_j)(e_{\kappa_2}-e_j) \big)^{1/4}}
 {\big( \prod_{\iota \in \I_0 \backslash \{\kappa_1,\kappa_2\}} 
 (e_{\kappa_1} - e_{\iota})(e_{\kappa_2} - e_{\iota}) \big)^{1/4}} \times \\ \times
 \bigg(\frac{\sum_{j=1}^g(-1)^{j-1} s_{j-1}\big(\I_0^{(\kappa_1)}\big) \omega_{jn}}{(e_{\kappa_1} - e_{\kappa_2})}
  + \frac{\sum_{j=1}^g(-1)^{j-1} s_{j-1}\big(\I_0^{(\kappa_2)}\big) \omega_{jn}}{(e_{\kappa_2} - e_{\kappa_1})} \bigg).
\end{multline}
Then second Thomae theorem in the form
\begin{gather}\label{Th2Inv}
 \sum_{j=1}^g  (-1)^{j-1} s_{j-1}(\I_0^{(p)}) \omega_{j,n} = \widetilde{\epsilon} 
 \frac{\big(\prod_{\iota \in \I_0^{(p)}} (e_p - e_{\iota}) \big)^{1/4}}{\big(\prod_{j \in \J_0} (e_p - e_j)\big)^{1/4}}
 \frac{\partial_{v_n} \theta[\I_0^{(p)}]}{\theta[\I_0]}
\end{gather}
is applied to the right hand side of \eqref{thomaeK2R}, so
\begin{multline*}
 \partial_{v_{n}} \theta \big[\I_0 \backslash \{\kappa_1,\kappa_2\}\big]  
  =  \Ord \frac{\big(\prod_{j\in \J_0} (e_{\kappa_2}-e_j) \big)^{1/4}
  \partial_{v_n} \theta[\I_0^{(\kappa_1)}]}
 {\big(-(e_{\kappa_1} - e_{\kappa_2})^2 \prod_{\iota \in \I_{0}^{(\kappa_2)}} (e_{\kappa_2} - e_{\iota}) \big)^{1/4}} \\
   - \Ord \frac{\big(\prod_{j\in \J_0} (e_{\kappa_1}-e_j) \big)^{1/4}
   \partial_{v_n} \theta[\I_0^{(\kappa_2)}]}
 {\big(-(e_{\kappa_2} - e_{\kappa_1})^2 \prod_{\iota \in \I_{0}^{(\kappa_1)}} (e_{\kappa_1} - e_{\iota}) \big)^{1/4}}.
\end{multline*}
Recall that $\widetilde{\epsilon}$ for all characteristics  of multiplicity $1$ is the same if branch points 
in all factors $(e_i-e_l)^{1/4}$ are ordered normally, that is $i>l$.
Using FTT Corollary~\ref{C:eJI} with the normal order in all factors $(e_i-e_l)$, relation \eqref{ThGInf} is obtained. 
At that relation \eqref{ThGInf} does not depend on the choice of $j_m$, $j_k$ due to FTT Corollary~\ref{C:eklm}.
\end{proof}

\begin{exam}\label{E:ThetaRelG2}
In genus $2$ case let  $\I_0=\{\kappa_1$, $\kappa_2\}$, 
$\kappa_1<\kappa_2$, and $\J_0 = \{j_1$, $j_2$, $j_3\}$ 
form a partition of the set $\{1,2,3,4,5\}$ of finite branch points.
Then \eqref{ThGInf} reads as 
 \begin{multline}\label{ThetaRelG2}
 \partial_v \theta^{\emptyset}  =  \big(\theta^{\{j_1,j_2\}}\theta^{\{j_1,j_3\}}\theta^{\{j_2,j_3\}}\big)^{-1} 
 \Big( \theta^{\{\kappa_2,j_1\}} \theta^{\{\kappa_2,j_2\}} \theta^{\{\kappa_2,j_3\}} \partial_v \theta^{\{\kappa_1\}} \\
- \theta^{\{\kappa_1,j_1\}} \theta^{\{\kappa_1,j_2\}} \theta^{\{\kappa_1,j_3\}} \partial_v \theta^{\{\kappa_2\}} \Big),
\end{multline}
where $\theta^{\{i,j\}}$ denotes the theta constant with characteristic corresponding to point $\mathcal{A} (\{i,j\}) + K$.
There exist $10$ partitions $\I_0\cup \J_0$ of this type and $10$ right-hand side expressions 
all equal to $\partial_v \theta^{\emptyset}$.
Some relations are given in Appendix~\ref{A:G2ThetaRelK2}, where  characteristics of theta functions
are presented in two forms: in terms of partitions and in the standard form.
\end{exam}

\begin{rem}
 Relations \eqref{ThetaRelG2} are mentioned in \cite[p.\,1021]{ER2008} as
 `derived from addition theorems (e.g. Baker 1897, p. 342)'. 
 Here an alternative proof is proposed.
\end{rem}

\begin{exam}\label{E:ThetaRelG3}
In genus $3$ case let $\I_0=\{\iota$, $\kappa_1$, $\kappa_2\}$ with $\kappa_1<\kappa_2$.
For each $\iota$ one obtains $\binom{2g}{2}=15$ relations of the form
 \begin{multline}\label{ThGInfG3}
 \partial_v \theta^{\{\iota\}}
 = \big(\theta^{\{\iota,j_1,j_2\}}\theta^{\{j_2,j_3,j_4\}}\theta^{\{j_1,j_3,j_4\}}\big)^{-1} \times \\ \times 
 \Big(\theta^{\{\iota,\kappa_2,j_1\}}\theta^{\{\iota,\kappa_2,j_2\}}\theta^{\{j_3,j_4,\kappa_2)\}}
 \partial_v \theta^{\{\iota,\kappa_1\}} \\
 - \theta^{\{\iota,\kappa_1,j_1\}}\theta^{\{\iota,\kappa_1,j_2\}}\theta^{\{j_3,j_4,\kappa_1)\}}
 \partial_v \theta^{\{\iota,\kappa_2\}} \Big).
\end{multline}
Some relations with $\partial_v \theta^{\{1\}}$ are given in Appendix~\ref{A:G2ThetaRelK2},
where characteristics of theta functions
are presented both in terms of partitions and in the standard form.
\end{exam}

Proposition~\ref{P:thomaeK2} implies an obvious observation: every vector
$\partial_v \theta[\I_0\backslash\{\kappa_1,\kappa_2\}]$ is a linear combination of 
$\partial_v \theta[\I_0\backslash\{\kappa_1\}]$ and $\partial_v \theta[\I_0\backslash\{\kappa_2\}]$.
Note that the intersection of $\I_0\backslash\{\kappa_1,\kappa_2\}$, $\I_0\backslash\{\kappa_1\}$,
and $\I_0\backslash\{\kappa_2\}$ has cardinality $g-2$. Presumably, 
every three $g$-component vectors  $\partial_v \theta[\B_1]$, $\partial_v \theta[\B_2]$,
$\partial_v \theta[\B_3]$  such that  $\B_1 \cap \B_2 \cap \B_3$ is of cardinality $g-2$ 
contain only two linearly independent vectors.

\begin{prop}\label{P:D1thetaLD3}
Let $\I \cup \{\kappa_1,\kappa_2,\kappa_3\} \cup \J$ with $\I=\{i_1$, $\dots$, $i_{g-2}\}$ 
and $\J=\{j_1$, $\dots$, $j_{g}\}$
be a partition of the set $\{1$, $2$, $\dots$, $2g+1\}$ of indices of finite branch points 
of a genus $g$ hyperelliptic curve, and
$\kappa_1 < \kappa_2 < \kappa_3$. Then theta derivative vectors $\partial_{v} \theta[\I\cup \{\kappa_1\}]$,
$\partial_{v} \theta[\I\cup \{\kappa_2\}]$, and $\partial_{v} \theta[\I\cup \{\kappa_3\}]$ are linearly dependent,
and the following relation between these three holds
 \begin{multline}\label{D1thetaLD}
  \theta[\J^{(j_n\to \kappa_1)}]\theta[\J^{(j_m\to \kappa_1)}]\theta[\J^{(j_m,j_n \to \kappa_2,\kappa_3)}]
  \partial_{v} \theta[\I\cup \{\kappa_1\}] \\
 - \theta[\J^{(j_n\to \kappa_2)}]\theta[\J^{(j_m\to \kappa_2)}]\theta[\J^{(j_m,j_n \to \kappa_1,\kappa_3)}]
 \partial_{v} \theta[\I\cup \{\kappa_2\}] \\
 + \theta[\J^{(j_n\to \kappa_3)}]\theta[\J^{(j_m\to \kappa_3)}]\theta[\J^{(j_m,j_n \to \kappa_1,\kappa_2)}]
  \partial_{v} \theta[\I\cup \{\kappa_3\}] = 0.
\end{multline}
\end{prop}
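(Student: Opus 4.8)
The plan is to reduce the statement to a purely polynomial identity among the branch points by means of the second Thomae theorem in its matrix form \eqref{thomae2MF}, and only at the very end to translate the branch-point coefficients into theta constants via the corollaries of the first Thomae formula.

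First I would record that each of the three characteristics $\I\cup\{\kappa_a\}$, $a=1,2,3$, is non-singular odd of cardinality $g-1$, so \eqref{thomae2MF} applies and gives
\begin{gather*}
 \partial_v\theta[\I\cup\{\kappa_a\}] = c_a\, \omega^t \mathbf{s}(\I\cup\{\kappa_a\}),\qquad
 c_a = \epsilon_a\Big(\frac{\det\omega}{\pi^g}\Big)^{1/2}\Delta(\I\cup\{\kappa_a\})^{1/4}\Delta(\J_a)^{1/4},
\end{gather*}
where $\J_a=\{\kappa_b,\kappa_c\}\cup\J$ is the finite complement of $\I\cup\{\kappa_a\}$ and $\mathbf{s}(\I\cup\{\kappa_a\})$ is the column vector with entries $(-1)^{j-1}s_{j-1}(\I\cup\{\kappa_a\})$, $j=1,\dots,g$. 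Since $\omega^t$ is a fixed invertible matrix, the three gradients are linearly dependent precisely when the three vectors $\mathbf{s}(\I\cup\{\kappa_a\})$ are, and any linear relation among the latter transports to one among the former by left multiplication with $\omega^t$.

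Next I would establish the vector identity. The entries of $\mathbf{s}(\I\cup\{\kappa_a\})$ are exactly the coefficients of the degree $g-1$ polynomial $P_a(z)=\prod_{i\in\I\cup\{\kappa_a\}}(z-e_i)$. Writing $Q(z)=\prod_{i\in\I}(z-e_i)$ one has $P_a(z)=(z-e_{\kappa_a})Q(z)$, so every such coefficient vector lies in the two-dimensional span of the coefficient vectors of $zQ(z)$ and $Q(z)$. From $P_a=zQ-e_{\kappa_a}Q$ the dependence among the three vectors is
\begin{gather*}
 (e_{\kappa_2}-e_{\kappa_3})\mathbf{s}(\I\cup\{\kappa_1\})
 +(e_{\kappa_3}-e_{\kappa_1})\mathbf{s}(\I\cup\{\kappa_2\})
 +(e_{\kappa_1}-e_{\kappa_2})\mathbf{s}(\I\cup\{\kappa_3\})=0,
\end{gather*}
as one checks by collecting the $zQ$ and $Q$ parts, both of whose coefficients vanish identically. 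Multiplying on the left by $\omega^t$ and inserting the previous display yields
\begin{gather*}
 (e_{\kappa_2}-e_{\kappa_3})c_1^{-1}\partial_v\theta[\I\cup\{\kappa_1\}]
 +(e_{\kappa_3}-e_{\kappa_1})c_2^{-1}\partial_v\theta[\I\cup\{\kappa_2\}]
 +(e_{\kappa_1}-e_{\kappa_2})c_3^{-1}\partial_v\theta[\I\cup\{\kappa_3\}]=0,
\end{gather*}
which already proves the asserted linear dependence.

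It remains to identify this relation with \eqref{D1thetaLD}, i.e. to show that the coefficient ratios $(e_{\kappa_2}-e_{\kappa_3})c_1^{-1}:(e_{\kappa_1}-e_{\kappa_3})c_2^{-1}:(e_{\kappa_1}-e_{\kappa_2})c_3^{-1}$ agree with $A_1:A_2:A_3$, where $A_a$ is the product of the three theta constants multiplying $\partial_v\theta[\I\cup\{\kappa_a\}]$ in \eqref{D1thetaLD}. Using $\Delta(\I\cup\{\kappa_a\})=\Delta(\I)\prod_{i\in\I}(e_{\kappa_a}-e_i)$ and the analogous factorisation of $\Delta(\J_a)$, each quotient $c_a^{-1}c_b$ becomes a fourth root of a product of differences of the types $e_{\kappa_a}-e_{\kappa_b}$, $e_{\kappa_a}-e_j$ and $e_i-e_{\kappa_a}$; these are exactly the quantities that FTT Corollaries~\ref{C:eklm} and~\ref{C:eJI} express as fourth powers of ratios of theta constants with characteristics of the form $\J^{(\,\cdot\,)}$. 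Substituting and cancelling the common prefactor $(\det\omega/\pi^g)^{1/2}\Delta(\I)^{1/4}\Delta(\J)^{1/4}$ turns the coefficients into the advertised products of three theta constants. The main obstacle is precisely this last step: matching the fractional-power branch-point factors to the specific theta-constant products of \eqref{D1thetaLD} and, above all, fixing the eighth- and fourth-root ambiguities $\epsilon_a$ consistently. As elsewhere in the paper this is handled by Remark~\ref{R:eOrd}: adopting the right ordering $i>l$ in every factor $(e_i-e_l)$ removes the sign ambiguities and produces exactly the signs $+,-,+$ of \eqref{D1thetaLD}.
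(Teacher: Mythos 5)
Your proposal is correct, and it takes a genuinely different route from the paper's. The paper's proof (Appendix~\ref{A:D1thetaLD3}) stays entirely inside theta relations: it applies \eqref{ThGInf} of Proposition~\ref{P:thomaeK2} three times, decomposing $\partial_v\theta[\I]$ with respect to the pairs $\{\kappa_1,\kappa_2\}$, $\{\kappa_1,\kappa_3\}$, $\{\kappa_2,\kappa_3\}$, and then shows that the two relations among the three gradients obtained by equating these decompositions collapse to a single one, the collapse being forced by FTT Corollary~\ref{C:eklm} identities equivalent to $\frac{e_{\kappa_2}-e_{\kappa_1}}{e_{\kappa_3}-e_{\kappa_2}}-\frac{e_{\kappa_3}-e_{\kappa_1}}{e_{\kappa_3}-e_{\kappa_2}}+1=0$. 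You instead return to the second Thomae theorem \eqref{thomae2MF} and observe that $\mathbf{s}(\I\cup\{\kappa_a\})$ is the coefficient vector of $(z-e_{\kappa_a})Q(z)$ with $Q(z)=\prod_{i\in\I}(z-e_i)$, so the three vectors lie in the two-dimensional space spanned by the coefficient vectors of $zQ(z)$ and $Q(z)$ and obey the explicit syzygy with coefficients $e_{\kappa_b}-e_{\kappa_c}$; since $\omega^t$ is invertible and $c_a\neq0$ (the branch points are distinct), the linear dependence of the gradients — the qualitative half of the Proposition — follows by pure polynomial algebra. This is more elementary and makes the origin of the coefficients transparent, whereas in the paper the dependence emerges only as a by-product of theta identities. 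What your route costs is the quantitative half: converting $(e_{\kappa_b}-e_{\kappa_c})\,c_a^{-1}$ into the theta products of \eqref{D1thetaLD}, which you only sketch. That step does go through — for instance, apply the first Thomae formula \eqref{thomae1} to all nine theta constants in \eqref{D1thetaLD} and check that the ratio of each theta-product coefficient to the corresponding $(e_{\kappa_b}-e_{\kappa_c})c_a^{-1}$ is, up to roots of unity, a product of branch-point differences symmetric under permutations of $\{\kappa_1,\kappa_2,\kappa_3\}$, hence a common factor of all three — but this is a computation of roughly the same length as the paper's entire proof, and it reopens the fourth-root and sign ambiguities, which (like the paper, in its proof of Proposition~\ref{P:thomaeK2} and in Appendix~\ref{A:D1thetaLD3}) you can close only by the right-ordering convention of Remark~\ref{R:eOrd}. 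In short: same ingredients, different organisation — yours isolates the linear algebra and explains the coefficients, the paper's reuses Proposition~\ref{P:thomaeK2} so that only exact identities among theta constants remain to be verified.
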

Proof is given in Appendix~\ref{A:D1thetaLD3}.

In the context of Proposition~\ref{P:D1thetaLD3} with a partition 
$\I \cup \{\kappa_1$, $\kappa_2$, $0\} \cup \J$, where $0$ is the index of infinity, 
$\I =\{i_1,\dots,i_{g-2}\}$ and $\J =\{j_1,\dots,j_{g+1}\}$, relation \eqref{ThGInf} acquires the form
 \begin{multline}\label{D1thetaLD2P}
  \theta[\J_0^{(j_n \to 0)}]\theta[\J_0^{(j_m \to 0)}]\theta[\J_0^{(j_m,j_n \to \kappa_1,\kappa_2)}]
  \partial_{v} \theta[\I\cup \{0\}] \\
 - \theta[\J_0^{(j_n\to \kappa_1)}]\theta[\J_0^{(j_m\to \kappa_1)}]\theta[\J_0^{(j_m,j_n \to \kappa_2, 0)}]
 \partial_{v} \theta[\I\cup \{\kappa_1\}] \\
 + \theta[\J_0^{(j_n\to \kappa_2)}]\theta[\J_0^{(j_m\to \kappa_2)}]\theta[\J_0^{(j_m,j_n \to \kappa_1, 0)}]
  \partial_{v} \theta[\I\cup \{\kappa_2\}] = 0.
\end{multline}
One should take into account that complement partitions produce the same characteristic, 
for example $[\I\cup\{\kappa_2,j,0\}] = [\J_0^{(j\to \kappa_1)}]$.

Assertions of Proposition~\ref{P:thomaeK2} and Proposition~\ref{P:D1thetaLD3} are combined into 
\newtheorem*{prop2}{Proposition \ref{P:D1thetaLD3}'}
\begin{prop2}
Let $\I \cup \{\kappa_1$, $\kappa_2$, $\kappa_3\} \cup \J$ with $\I=\{i_1$, $\dots$, $i_{g-2}\}$ 
and $\J=\{j_1$, $\dots$, $j_{g+1}\}$
be a partition of the set $\{0$, $1$, $\dots$, $2g+1\}$ of indices of all branch points 
of a genus $g\geqslant 2$ hyperelliptic curve, and
$\kappa_1 < \kappa_2 < \kappa_3$, taking into account that the index of infinity is $0$. 
Then theta derivative vectors $\partial_{v} \theta[\I\cup \{\kappa_1\}]$,
$\partial_{v} \theta[\I\cup \{\kappa_2\}]$, and $\partial_{v} \theta[\I\cup \{\kappa_3\}]$ are linearly dependent,
and the following relation between these three holds
 \begin{multline}\label{D1thetaLDG}
  \theta[\J^{(j_n\to \kappa_1)}]\theta[\J^{(j_m\to \kappa_1)}]\theta[\J^{(j_m,j_n \to \kappa_2,\kappa_3)}]
  \partial_{v} \theta[\I\cup \{\kappa_1\}] \\
 - \theta[\J^{(j_n\to \kappa_2)}]\theta[\J^{(j_m\to \kappa_2)}]\theta[\J^{(j_m,j_n \to \kappa_1,\kappa_3)}]
 \partial_{v} \theta[\I\cup \{\kappa_2\}] \\
 + \theta[\J^{(j_n\to \kappa_3)}]\theta[\J^{(j_m\to \kappa_3)}]\theta[\J^{(j_m,j_n \to \kappa_1,\kappa_2)}]
  \partial_{v} \theta[\I\cup \{\kappa_3\}] = 0,
\end{multline}
where $\J^{(j_m,j_n \to \kappa_1,\kappa_2)}$ denotes the set $\J$ with indices $j_m$, $j_n$
replaced by $\kappa_1$, $\kappa_2$ etc.
\end{prop2}

Introduce the \emph{co-lexicographic order of sets} of indices. Only sets of equal cardinality are compared 
(the index of infinity should not be omitted). Two sets are compared by highest indices, 
if the highest indices are equal then a smaller ones should be compared, and so on. 

The result of Proposition~\ref{P:D1thetaLD3}' can be extended to the case of three
linearly independent vectors of theta derivatives.

\begin{prop}\label{P:D1thetaLD4}
Let $\I\cup \{\kappa_1$, $\kappa_2$, $\kappa_3$, $\kappa_4$, $\kappa_5\} \cup \J$ with 
$\I =\{i_1$, $\dots$, $i_{g-3}\}$ and $\J=\{j_1$, $\dots$, $j_{g}\}$
be a partition of the set $\{0$, $1$, $\dots$, $2g+1\}$ of indices of all branch points of 
a genus $g\geqslant 3$ hyperelliptic curve.
Then any collection of three theta derivative vectors of the form 
$\partial_{v} \theta[\I\cup \{\kappa_1,\kappa_2\}]$,
$\partial_{v} \theta[\I\cup \{\kappa_1,\kappa_3\}]$, and $\partial_{v} \theta[\I\cup \{\kappa_2,\kappa_3\}]$
is linearly independent. And vector $\partial_{v} \theta[\I\cup \{\kappa_4,\kappa_5\}]$ is spanned by 
these three, at that the following relation holds
\begin{multline}\label{D1ThetaLD4a}
 \theta[\J^{(j_n\to \kappa_1,\kappa_2)}]\theta[\J^{(j_m\to \kappa_1,\kappa_2)}]
 \theta[\J^{(j_m,j_n \to \kappa_3,\kappa_4,\kappa_5)}]
  \partial_{v} \theta[\I\cup \{\kappa_1,\kappa_2\}] \\
 - \theta[\J^{(j_n\to \kappa_1,\kappa_3)}]\theta[\J^{(j_m\to \kappa_1,\kappa_3)}]
 \theta[\J^{(j_m,j_n \to \kappa_2,\kappa_4,\kappa_5)}]
 \partial_{v} \theta[\I\cup \{\kappa_1,\kappa_3\}] \\
 + \theta[\J^{(j_n\to \kappa_2,\kappa_3)}]\theta[\J^{(j_m\to \kappa_2,\kappa_3)}] 
 \theta[\J^{(j_m,j_n \to \kappa_1,\kappa_4,\kappa_5)}]
 \partial_{v} \theta[\I\cup \{\kappa_2,\kappa_3\}] \\
 - \theta[\J^{(j_n\to \kappa_4,\kappa_5)}]\theta[\J^{(j_m\to \kappa_4,\kappa_5)}] 
 \theta[\J^{(j_m,j_n \to \kappa_1,\kappa_2,\kappa_3)}]
  \partial_{v} \theta[\I\cup \{\kappa_4,\kappa_5\}] = 0.
\end{multline}
where $\J^{(j_m,j_n \to \kappa_1,\kappa_2,\kappa_3)}$ denotes the set $\J$ with indices $j_m$, $j_n$
replaced by $\kappa_1$, $\kappa_2$, $\kappa_3$ etc.
Every set $\I\cup \{\kappa_i,\kappa_k\}$ is arranged in the ascending order, and
$\kappa_1 < \kappa_2 < \kappa_3 < \kappa_4 < \kappa_5$ is supposed, 
the index of infinity is $0$. 
\end{prop}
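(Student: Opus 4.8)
The plan is to reduce everything to an elementary statement about polynomials by means of the second Thomae formula in its matrix form \eqref{thomae2MF}. For a multiplicity‑one characteristic with index set $\I_1 = \I\cup\{\kappa_i,\kappa_k\}$, formula \eqref{thomae2MF} exhibits the gradient $\partial_v\theta[\I_1]$ as a nonzero scalar multiple of $\omega^t\sigma(\I_1)$, where $\sigma(\I_1)=\big(s_0(\I_1),-s_1(\I_1),\dots,(-1)^{g-1}s_{g-1}(\I_1)\big)^t$ is exactly the coefficient vector (in degrees $t^{g-1},\dots,t^0$) of the monic polynomial $P_{\I_1}(t)=\prod_{a\in\I_1}(t-e_a)$. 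The scalar prefactor $\epsilon(\det\omega/\pi^g)^{1/2}\Delta(\I_1)^{1/4}\Delta(\J_1)^{1/4}$ is nonzero because $\det\omega\neq0$ and the branch points are distinct, and $\omega^t$ is invertible; hence any linear relation among the gradients is equivalent to the same linear relation among the coefficient vectors of the $P_{\I_1}$. Since all four sets $\I\cup\{\kappa_i,\kappa_k\}$ share the common part $\I$, one has $P_{\I\cup\{\kappa_i,\kappa_k\}}(t)=Q(t)(t-e_{\kappa_i})(t-e_{\kappa_k})$ with $Q(t)=\prod_{i\in\I}(t-e_i)$ fixed and nonzero (for $g=3$ simply $Q\equiv1$). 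As multiplication by $Q$ is an injective linear map on polynomials, the whole question descends to the three‑dimensional space of quadratics in $t$.

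At the polynomial level both assertions are then transparent. For linear independence, the three quadratics $(t-e_{\kappa_1})(t-e_{\kappa_2})$, $(t-e_{\kappa_1})(t-e_{\kappa_3})$, $(t-e_{\kappa_2})(t-e_{\kappa_3})$ have coefficient vectors whose determinant, after a one–line row reduction, equals the Vandermonde‑type product $\prod_{1\le i<k\le3}(e_{\kappa_i}-e_{\kappa_k})$ up to sign; this is nonzero because the branch points are distinct, so the three quadratics — and therefore, after applying multiplication by $Q$ and then $\omega^t$, the three gradients — are linearly independent. For the decomposition, $(t-e_{\kappa_4})(t-e_{\kappa_5})$ is itself a quadratic and hence lies in the span of the three basis quadratics, which proves that $\partial_v\theta[\I\cup\{\kappa_4,\kappa_5\}]$ is a combination of the other three. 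Its coefficients are produced explicitly by Lagrange interpolation at the nodes $e_{\kappa_1},e_{\kappa_2},e_{\kappa_3}$; for example the coefficient of $(t-e_{\kappa_2})(t-e_{\kappa_3})$ is $\dfrac{(e_{\kappa_1}-e_{\kappa_4})(e_{\kappa_1}-e_{\kappa_5})}{(e_{\kappa_1}-e_{\kappa_2})(e_{\kappa_1}-e_{\kappa_3})}$, and cyclically.

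It remains to translate this polynomial identity into the theta‑constant coefficients of \eqref{D1ThetaLD4a}. Reassembling multiplication by $Q$ and $\omega^t$ returns a relation among the gradients, but with coefficients built from the scalar normalisations and the Lagrange ratios of branch‑point differences; following the route used in the proof of Proposition~\ref{P:thomaeK2}, I would clear the normalisations by the First Thomae formula and rewrite each ratio of branch‑point differences as a quotient of fourth powers of theta constants via FTT Corollary~\ref{C:eJI}, then take fourth roots and fix the sign $\epsilon$ through the right ordering of Remark~\ref{R:eOrd}, obtaining exactly the triple products $\theta[\J^{(\cdots)}]$ in \eqref{D1ThetaLD4a}. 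The main obstacle is precisely this bookkeeping: matching each Lagrange ratio to the correct triple product and checking that all the eighth‑root ambiguities combine consistently with the chosen ordering, just as in the appendix proof of Proposition~\ref{P:D1thetaLD3}. Finally, the closing claim that the six remaining vectors $\partial_v\theta[\I\cup\{\kappa_i,\kappa_k\}]$ each split into two basis vectors follows at once from Proposition~\ref{P:D1thetaLD3}$'$: for $\partial_v\theta[\I\cup\{\kappa_1,\kappa_4\}]$ take the common part $\I\cup\{\kappa_1\}$ of cardinality $g-2$, so that the triple $\I\cup\{\kappa_1,\kappa_4\}$, $\I\cup\{\kappa_1,\kappa_2\}$, $\I\cup\{\kappa_1,\kappa_3\}$ is linearly dependent; choosing analogously $\I\cup\{\kappa_2\}$ or $\I\cup\{\kappa_3\}$ as the common part handles the other five vectors.
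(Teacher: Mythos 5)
Your reduction to polynomial algebra is a genuinely different route from the paper's. The paper proves \eqref{D1ThetaLD4a} by writing three instances of the already-established three-term relation \eqref{D1thetaLDG} (with common sets $\I\cup\{\kappa_1\}$, $\I\cup\{\kappa_2\}$ and $\I\cup\{\kappa_5\}$), eliminating $\partial_v\theta[\I\cup\{\kappa_1,\kappa_5\}]$ and $\partial_v\theta[\I\cup\{\kappa_2,\kappa_5\}]$, and collapsing the resulting coefficients through FTT Corollary~\ref{C:eklm} and a vanishing cross-ratio identity, so that the theta-constant coefficients and the alternating signs come out automatically. Your approach instead goes back to Thomae-type formulas directly: identifying each gradient, up to a nonzero scalar and the invertible matrix $\omega^t$, with the coefficient vector of $\prod_{a\in\I_1}(t-e_a)$, so that linear independence of the three vectors is a Vandermonde computation and the decomposition of the fourth is Lagrange interpolation. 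That part is correct (your determinant and your interpolation coefficients check out), and it is arguably more transparent than the paper's elimination argument for the \emph{qualitative} claims.

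However, there are two genuine gaps. First, the substance of the proposition is the exact identity \eqref{D1ThetaLD4a}, with its specific triple products $\theta[\J^{(\cdots)}]$ and the sign pattern tied to the ascending order of the sets; your argument stops at coefficients written as ratios of branch-point differences, plus a plan to convert them via first Thomae formula and FTT Corollary~\ref{C:eJI} ``taking fourth roots.'' That conversion is precisely where the work lies: each application of \eqref{eJI} or \eqref{eklm} determines a theta-quotient only up to a root of unity, and one must check that all these ambiguities can be fixed consistently (by the right ordering of Remark~\ref{R:eOrd}) so as to produce exactly the signs $+,-,+,-$ of \eqref{D1ThetaLD4a}; you flag this bookkeeping as the main obstacle but do not carry it out, so the identity is established only up to undetermined constant factors on each of the four terms. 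Second, your starting point \eqref{thomae2MF} is stated only for partitions of the \emph{finite} branch-point indices $\{1,\dots,2g+1\}$, whereas the proposition concerns partitions of $\{0,1,\dots,2g+1\}$, so the index of infinity may lie in $\I$ or among the $\kappa$'s; in that case the gradient is not covered by second Thomae theorem, and one must instead invoke the general Thomae formula (Theorem~\ref{T:ThN} with $\mFr=1$, or \eqref{thomaeNR}), where the associated polynomial has degree $g-2$, and the Vandermonde and Lagrange steps need the standard modification for a node at infinity. Neither repair is conceptually hard, but both are required before the proposal is a complete proof.
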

\emph{A brief proof}.
According to Proposition~\ref{P:D1thetaLD3}'
vector $\partial_v \theta[\I\cup\{\kappa_4,\kappa_5\}]$  is a linear combination of
$\partial_v \theta[\I\cup\{\kappa_4,\kappa_1\}]$ and $\partial_v \theta[\I\cup\{\kappa_4,\kappa_2\}]$. 
Next, $\partial_v \theta[\I\cup\{\kappa_4,\kappa_1\}]$ is a linear combination of 
$\partial_v \theta[\I\cup\{\kappa_1,\kappa_2\}]$  and  $\partial_v \theta[\I\cup\{\kappa_1,\kappa_3\}]$,
and $\partial_v \theta[\I\cup\{\kappa_4,\kappa_2\}]$ is a linear combination of 
$\partial_v \theta[\I\cup\{\kappa_1,\kappa_2\}]$  and  $\partial_v \theta[\I\cup\{\kappa_2,\kappa_3\}]$.
This leads to three relations of the form \eqref{D1thetaLDG} which reduce to \eqref{D1ThetaLD4a}.
A more detailed proof is given in Appendix~\ref{A:D1thetaLD4}.

Therefore, a collection of vectors of the form $\partial_v \theta[\I\cup\{\kappa_i,\kappa_j\}]$ with $\card \I = g-3$
and $i$, $j\in\{1,2,3,4,5\}$ contains three linear independent vectors. One can choose 
$\partial_v \theta[\I\cup\{\kappa_1,\kappa_2\}]$, $\partial_v \theta[\I\cup\{\kappa_1,\kappa_3\}]$, 
 and  $\partial_v \theta[\I\cup\{\kappa_2,\kappa_3\}]$ as a basis. Then $\partial_v \theta[\I\cup\{\kappa_4,\kappa_5\}]$
is spanned by all three vectors due to the cardinality of the intersection of the four sets: $\I\cup\{\kappa_1,\kappa_2\}$,
$\I\cup\{\kappa_1,\kappa_3\}$, $\I\cup\{\kappa_2,\kappa_3\}$ and $\I\cup\{\kappa_4,\kappa_5\}$ is $g-3$. 
Any other vector from the collection  is spanned by only two basis vectors due to 
the intersection of the sets defining characteristics has cardinality $g-2$.
Indeed, if $\{\kappa_i,\kappa_j\}$ is not $\{\kappa_4,\kappa_5\}$ then one of indices $i$, $j$ belongs to $\{1$, $2$, $3\}$.
For example, vector $\partial_v \theta[\I\cup\{\kappa_1,\kappa_4\}]$ is linearly dependent on 
$\I\cup\{\kappa_1,\kappa_2\}$, $\I\cup\{\kappa_1,\kappa_3\}$,
according to Proposition~\ref{P:D1thetaLD3}', since the intersection of sets
$\I\cup\{\kappa_1,\kappa_2\}$,
$\I\cup\{\kappa_1,\kappa_3\}$ and $\I\cup\{\kappa_1,\kappa_4\}$ has cardinality $g-2$.

One can choose another linearly independent set, say
$\partial_{v} \theta[\I\cup \{\kappa_2,\kappa_3\}]$,
$\partial_{v} \theta[\I\cup \{\kappa_2,\kappa_5\}]$, and $\partial_{v} \theta[\I\cup \{\kappa_3,\kappa_5\}]$.
Then the vector spanned by all these three vectors is $\partial_{v} \theta[\I\cup \{\kappa_1,\kappa_4\}]$, and
the following relation holds
\begin{multline}\label{D1ThetaLD4b}
 \theta[\J^{(j_n\to \kappa_2,\kappa_3)}]\theta[\J^{(j_m\to \kappa_2,\kappa_3)}]
 \theta[\J^{(j_m,j_n \to \kappa_1,\kappa_4,\kappa_5)}]
  \partial_{v} \theta[\I\cup \{\kappa_2,\kappa_3\}] \\
 - \theta[\J^{(j_n\to \kappa_1,\kappa_4)}]\theta[\J^{(j_m\to \kappa_1,\kappa_4)}]
 \theta[\J^{(j_m,j_n \to \kappa_2,\kappa_3,\kappa_5)}]
 \partial_{v} \theta[\I\cup \{\kappa_1,\kappa_4\}] \\
 + \theta[\J^{(j_n\to \kappa_2,\kappa_5)}]\theta[\J^{(j_m\to \kappa_2,\kappa_5)}] 
 \theta[\J^{(j_m,j_n \to \kappa_1,\kappa_3,\kappa_4)}]
 \partial_{v} \theta[\I\cup \{\kappa_2,\kappa_5\}] \\
 - \theta[\J^{(j_n\to \kappa_3,\kappa_5)}]\theta[\J^{(j_m\to \kappa_3,\kappa_5)}] 
 \theta[\J^{(j_m,j_n \to \kappa_1,\kappa_2,\kappa_4)}]
  \partial_{v} \theta[\I\cup \{\kappa_3,\kappa_5\}] = 0.
\end{multline}
\begin{rem}
Note that summands in relations \eqref{D1thetaLD}--\eqref{D1ThetaLD4b} 
have alternating signs when all vectors $\partial_{v}\theta[\I\cup \{\kappa_i,\kappa_k\}]$ 
are arranged in the ascending order of sets $\I\cup \{\kappa_i,\kappa_k\}$.
We suppose that  $\kappa_1 < \kappa_2 < \kappa_3 < \kappa_4 < \kappa_5$.
\end{rem}

The above is generalised in the following
\begin{teo}\label{T:D1thetaLD}
Let $\{\partial_v \theta[\B_n]\}$ be a collection of $g$-component vectors,
where $\B_n$ denotes a subset of the set $\{0$, $1$, $\dots$, $2g+1\}$ of indices
of all branch points of a genus~$g$ hyperelliptic curve,
such that $[\B_n]$ is a characteristic of multiplicity $1$. 

If $\cap_{n} \B_n$ has cardinality $g-\rFr$, then the collection of vectors has rank not greater than $\rFr$. 
The rank equals $\rFr$ if and only if for every $\nFr$ from $\rFr$ to $2$ 
there exists at least one subcollection $\{\partial_v \theta[\B_{k_1}]$, \ldots,
$\partial_v \theta[\B_{k_{\nFr+1}}]\}$ of $\nFr+1$ vectors
such that $\cap_{i=1}^{\nFr+1} \B_{k_i}$ is of cardinality $g-\nFr$.
\end{teo}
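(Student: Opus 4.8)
The plan is to use the second Thomae formula to convert the statement into a question about linear independence of products of linear factors, settle the rank bound by a dimension count, and treat the equality by induction on $\rFr$.

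\emph{First, the reduction.} By the matrix form \eqref{thomae2MF} of the second Thomae theorem, each vector is $\partial_v\theta[\I^{(k)}]=c_k\,\omega^t\mathbf{s}_k$, where $c_k=\epsilon_k(\det\omega/\pi^g)^{1/2}\Delta(\I^{(k)})^{1/4}\Delta(\J^{(k)})^{1/4}\neq0$ (all branch points are distinct and $\det\omega\neq0$) and $\mathbf{s}_k=\big((-1)^{j-1}s_{j-1}(\I^{(k)})\big)_{j=1}^g$. Since $\omega^t$ is invertible and every $c_k\neq0$, rescaling each vector and applying $\omega^t$ preserve spans, so $\rank\{\partial_v\theta[\I^{(k)}]\}=\rank\{\mathbf{s}_k\}$. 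Taking the representation in which the infinite index lies in $\J^{(k)}$, each $\I^{(k)}$ has $g-1$ finite indices, and $\mathbf{s}_k$ is precisely the coefficient vector, in the basis $x^{g-1},\dots,x,1$, of the monic polynomial $P_{\I^{(k)}}(x)=\prod_{i\in\I^{(k)}}(x-e_i)$ of degree $g-1$. Thus the whole statement becomes a claim about $\dim\spanOp\{P_{\I^{(k)}}\}$ inside the $g$-dimensional space $\mathbb{C}[x]_{\leqslant g-1}$.

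\emph{Next, the bound.} Set $\I=\bigcap_k\I^{(k)}$, so $|\I|=g-\rFr$ and $\I\subseteq\I^{(k)}$ for all $k$. Each $P_{\I^{(k)}}$ then factors as $P_\I\cdot Q_k$, where $P_\I(x)=\prod_{i\in\I}(x-e_i)$ and $Q_k(x)=\prod_{i\in\I^{(k)}\setminus\I}(x-e_i)$ is monic of degree $(g-1)-(g-\rFr)=\rFr-1$. Multiplication by the fixed nonzero polynomial $P_\I$ is an injective linear map, so $\dim\spanOp\{P_{\I^{(k)}}\}=\dim\spanOp\{Q_k\}$, and the latter lives in the $\rFr$-dimensional space $\mathbb{C}[x]_{\leqslant\rFr-1}$. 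This yields the first assertion $\rank\leqslant\rFr$ immediately.

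\emph{Then, the equality.} Full rank now means exactly that the $Q_k$ span $\mathbb{C}[x]_{\leqslant\rFr-1}$. Writing $\I^{(k_i)}=\I\cup T_{k_i}$ gives $\bigcap_i\I^{(k_i)}=\I\cup\bigcap_iT_{k_i}$, so the condition $|\bigcap_i\I^{(k_i)}|=g-\nFr$ is equivalent to $|\bigcap_iT_{k_i}|=\rFr-\nFr$, i.e.\ the corresponding $Q_{k_i}$ share a common factor of degree $\rFr-\nFr$ and no more. For the forward implication I would induct on $\rFr$: the datum at level $\nFr=2$ supplies two sets $T_{k_1},T_{k_2}$ meeting in $\rFr-2$ indices, so $Q_{k_1},Q_{k_2}$ share a factor $R$ of degree $\rFr-2$ and differ only in $(x-e_a)$ versus $(x-e_b)$ with $a\neq b$; dividing out $R$ drops the problem into $\mathbb{C}[x]_{\leqslant1}$, and evaluating at the unshared branch points detects a new independent direction, while the remaining levels feed the induction so that the span grows to the full dimension $\rFr$.

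\emph{The main obstacle} I expect to be the converse, namely that full rank forces, for every $\nFr$ between $2$ and $\rFr$, a subcollection whose intersection has cardinality exactly $g-\nFr$. The rank of a family of products of distinct linear factors is a matroid-type invariant that is not read off from any single intersection number, so the real work will be to show that the prescribed chain of subcollections can always be extracted from a spanning family and to pin down precisely which subcollection sizes and intersection cardinalities are forced by each increment of the span. This combinatorial bookkeeping — together with checking that the alternating-sign structure of \eqref{D1thetaLD}--\eqref{D1ThetaLD4a} is reproduced at every step of the induction — is where I anticipate the difficulty to concentrate, rather than in the algebra.
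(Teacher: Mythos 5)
You take a genuinely different route from the paper: the paper assembles this theorem from its decomposition relations (Propositions~\ref{P:thomaeK2}, \ref{P:D1thetaLD3}', \ref{P:D1thetaLD4}, with theta-constant coefficients), whereas you translate everything into the linear algebra of the polynomials $\prod_{i\in\I^{(k)}}(x-e_i)$ via \eqref{thomae2MF}; for the inequality $\rank\leqslant\rFr$ your common-factor argument is cleaner and essentially complete, except for one false step. You cannot ``take the representation in which the infinite index lies in $\J^{(k)}$'': the position of the index $0$ is fixed by the characteristic (the correspondence between multiplicity-$1$ characteristics and partitions is one-to-one), and the theorem expressly includes sets with $0\in\I^{(k)}$, which have only $g-2$ finite indices, so the second Thomae formula does not apply to them. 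Those vectors must be handled by the general Thomae formula with $\kFr=2$, i.e.\ by \eqref{thomaeK2R}; a one-line computation, $\bigl(P_{\I\cup\{\kappa_2\}}(x)-P_{\I\cup\{\kappa_1\}}(x)\bigr)/(e_{\kappa_1}-e_{\kappa_2})=P_{\I}(x)$, shows that such a gradient corresponds to the monic polynomial over the \emph{finite} indices of $\I^{(k)}$, of degree $g-2$. With this assignment your dimension count survives verbatim (all quotients by the common finite factor still have degree at most $\rFr-1$), so this gap is repairable, but as written the reduction is incorrect.

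The second gap is not repairable by bookkeeping, because it concerns the actual content of the theorem. Your ``if'' direction is only a sketch (the assertion that ``the remaining levels feed the induction so that the span grows to the full dimension'' is exactly what has to be proved), and the ``only if'' direction, which you explicitly defer as the main obstacle, is in fact \emph{false}, as your own polynomial model shows. Take $g=3$, $\I^{(1)}=\{1,2\}$, $\I^{(2)}=\{3,4\}$, $\I^{(3)}=\{5,6\}$, so that $\cap_k\I^{(k)}=\emptyset$ and $\rFr=3$: the quadratics $(x-e_1)(x-e_2)$, $(x-e_3)(x-e_4)$, $(x-e_5)(x-e_6)$ are linearly independent for generic branch points (e.g.\ $e_i=i$ gives coefficient rows $(1,-3,2)$, $(1,-7,12)$, $(1,-11,30)$ with determinant $-32\neq 0$), so the rank equals $\rFr$, yet no subcollection demanded by the theorem exists — all pairwise intersections are empty, and there are only three vectors rather than $\nFr+1=4$. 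Hence rank $=\rFr$ does not force the combinatorial conditions; they can at best be sufficient. (For comparison, the paper's own proof is also inconclusive here: it constructs maximal spanning sets of $\rFr$ vectors and gives one example where the rank drops, but never proves the equivalence.) A complete treatment along your lines would therefore consist of: the corrected reduction, your clean proof of the bound, an honest induction for the ``if'' direction (the level-$\nFr{=}2$ triple spans a two-dimensional space with a common linear factor, and the higher-level subcollections together with the emptiness of the total intersection must be shown to force new directions one at a time), and a restriction or reformulation of the ``only if'' claim, which as stated cannot be proved.
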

\begin{proof}
As stated in Propositions~\ref{P:D1thetaLD3}', a collection of $g$-component vectors
$\{\partial_v \theta[\B_n]\}$ with $\cap_{n} \B_n =  \B^{[g-2]}$, 
where $\B^{[g-2]}$ denotes a set of cardinality $g-2$,
contains at most two linearly independent vectors. 
Every $\B_n$ in the collection contains $\B^{[g-2]}$, and is obtained from $\B^{[g-2]}$ 
by joining one index (due to multiplicity $1$ of the corresponding characteristic, 
see Subsection \ref{ss:CharPart} for a detailed explanation). 
Any pair of vectors $\partial_v \theta[\B_{n_1}]$ and $\partial_v \theta[\B_{n_2}]$
such that $\B_{n_1}=\B^{[g-2]}\cup\{\kappa_1\}$ and $\B_{n_2}=\B^{[g-2]}\cup\{\kappa_2\}$
with $\kappa_1$, $\kappa_2 \not\in \B^{[g-2]}$ serves as a spanning set (or a basis) of the collection,
and any other theta derivative $\partial_v \theta [\B_n]$  from the collection 
is a linear combination of these two as given by \eqref{D1thetaLDG}.

Let $\{\partial_v \theta[\B_n]\}$ be a collection of $g$-component vectors such that
$\cap_{n} \B_n$ equals a set $\B^{[g-3]}$ of cardinality $g\,{-}\,3$. 
Then every $\B_n$ is obtained from $\B^{[g-3]}$ by joining two indices. 
As stated in Proposition~\ref{P:D1thetaLD4},
vectors $\partial_v \theta[\B_{n_1}]$, $\partial_v \theta[\B_{n_2}]$ and
$\partial_v \theta[\B_{n_3}]$ such that $\B_{n_1}=\B^{[g-3]}\cup\{\kappa_1,\kappa_2\}$, 
$\B_{n_2}=\B^{[g-3]}\cup\{\kappa_1,\kappa_3\}$
and $\B_{n_3}=\B^{[g-3]}\cup\{\kappa_2,\kappa_3\}$ with $\kappa_1$, $\kappa_2$, $\kappa_3 \not\in \B^{[g-3]}$
form a linearly independent set.
So the collection containing such three vectors has rank $3$, 
and all other vectors of the collection are linear combinations of these three as given by \eqref{D1ThetaLD4a}. 
Note that a collection of rank $2$ (if two spanning vectors are sufficient)
necessarily has $\cap_{n} \B_n = \B^{[g-2]}$, where $\B ^{[g-2]}$ is of cardinality $g\,{-}\,2$.

Next, consider a collection $\{\partial_v \theta[\B_n]\}$ of $g$-component vectors with 
$\cap_{n} \B_n$ equal to a set $\B^{[g-4]}$ of cardinality $g\,{-}\,4$. Then
every $\B_n$ is obtained from $\B^{[g-4]}$ by joining three indices,
let $\K_{i,j,k}=\{\kappa_i,\kappa_j,\kappa_k\}$ denote a set of indices joined to~$\B^{[g-4]}$,
that is $\B_n =\B^{[g-4]}\cup\K_{i,j,k}$.
Among all vectors of the collection at most four constitute a spanning set, 
for example with $\K_{1,2,3}$, $\K_{1,2,4}$, $\K_{1,3,4}$, $\K_{2,3,4}$.
Indeed, by Proposition~\ref{P:D1thetaLD3}' a vector $\partial_v \theta[\B^{[g-4]}\cup\K_{i,j,k}]$ 
is spanned by  two: $\partial_v \theta[\B^{[g-4]}\cup\K_{1,j,k}]$ and $\partial_v \theta[\B^{[g-4]}\cup\K_{2,j,k}]$.
Then the former is spanned by $\partial_v \theta[\B^{[g-4]}\cup\K_{1,2,k}]$ and
$\partial_v \theta[\B^{[g-4]}\cup\K_{1,3,k}]$,
and the latter by $\partial_v \theta[\B^{[g-4]}\cup\K_{1,2,k}]$
and $\partial_v \theta[\B^{[g-4]}\cup\K_{2,3,k}]$.
By Proposition~\ref{P:D1thetaLD4} three vectors $\partial_v \theta[\B^{[g-4]}\cup\K_{1,2,k}]$,
$\partial_v \theta[\B^{[g-4]}\cup\K_{1,3,k}]$ and $\partial_v \theta[\B^{[g-4]}\cup\K_{2,3,k}]$
form a spanning set for any vector $\partial_v \theta[\B^{[g-4]}\cup\K_{l,m,k}]$ 
with $l$, $m \not\in \{1$, $2$, $3\}$. Similarly to the proof of  Proposition~\ref{P:D1thetaLD4}
one can prove that vectors $\partial_v \theta[\B^{[g-4]}\cup\K_{1,2,3}]$,
$\partial_v \theta[\B^{[g-4]}\cup\K_{1,2,4}]$, $\partial_v \theta[\B^{[g-4]}\cup\K_{1,3,4}]$, 
$\partial_v \theta[\B^{[g-4]}\cup\K_{2,3,4}]$ form a spanning set for  any
$\partial_v \theta[\B^{[g-4]}\cup\K_{i,j,k}]$ with $i$, $j$, $k \not\in \{1$, $2$, $3$, $4\}$. 

Note that a collection $\{\partial_v \theta[\B_n]\}$ with an intersection $\cap_{n} \B_n$ 
of cardinality $g-4$ does not necessarily contain 
four linearly independent vectors. Suppose that all vectors but one  of the collection are spanned by two vectors, 
say $\partial_v \theta[\B^{[g-4]}\cup\K_{1,2,3}]$ and $\partial_v \theta[\B^{[g-4]}\cup\K_{1,2,4}]$,
that is the intersection of the corresponding partitions is $\B^{[g-4]}\cup \{\kappa_1,\kappa_2\}$ of cardinality $g-2$. 
These two vectors with the remaining one form a basis. At the same time, 
by the assumption the intersection of all partitions in the collection is of cardinality $g-4$.
Thus, the collection has only three spanning vectors, so the rank is~$3$ (less than $4$). In this case 
there is no subcollection with an intersection of cardinality $g-3$. 

Now, let $\{\partial_v \theta[\B_n]\}$ be a collection of $g$-component vectors such that
$\cap_{n} \B_n =\B^{[g-\rFr]}$ is of cardinality $g\,{-}\,\rFr$. 
Every $\B_n$ is obtained from $\B^{[g-\rFr]}$ by joining $\rFr-1$ indices,
due to multiplicity~$1$ of characteristic $[\B_n]$. By induction
one can prove that  a maximal spanning set can be composed from $\rFr$ vectors 
of the form $\partial_{v} \theta[\B^{[g-\rFr]}\cup \K^{(\kappa)}]$,
where $\K$ is a set of $\rFr$ indices, say $\{\kappa_1,\kappa_2,\dots,\kappa_\rFr\}$,
and $\K^{(\kappa)}=\K\backslash \{\kappa\}$.
However, collections with smaller number of spanning vectors also exist.
\end{proof}

\begin{conj}\label{C:D1thetaLDH}
Let $\I \cup \B \cup \J$ with 
$\I =\{i_1$, $\dots$, $i_{g-\rFr}\}$, $\B = \{\kappa_1$, $\dots$, $\kappa_{2\rFr-1}\}$,
and $\J=\{j_1$, $\dots$, $j_{g+3-\rFr}\}$
be a partition of the set $\{0,$ $1$, $\dots$, $2g+1\}$ of indices of branch points
 of a genus $g$ hyperelliptic curve.
Let $\K = \{\kappa_1,\kappa_2,\dots,\kappa_\rFr\} \subset \B$, 
and $\K^{(\kappa)}$ denote $\K \backslash \{\kappa\}$. 
Then vectors $\{\partial_{v} \theta[\I\cup \K^{(\kappa_l)}]\mid l=1,\dots,\rFr\}$ are linearly independent,
and vector $\partial_{v} \theta[\I\cup (\B\backslash \K)]$ is spanned by these $\mathfrak{r}$ vectors,
at that the following relation holds
\begin{multline}\label{D1ThetaLDN}
 \sum_{l=1}^\rFr (-1)^{l-1}\theta[\J^{(j_n)}\cup \K^{(\kappa_l)}]\theta[\J^{(j_m)}\cup \K^{(\kappa_l)}] 
 \theta[\J^{(j_m,j_n)}\cup (\B\backslash\K^{(\kappa_l)})]
  \partial_{v} \theta[\I\cup \K^{(\kappa_l)}] \\
 + (-1)^{\rFr} \theta[\J^{(j_n)}\cup (\B\backslash \K)]\theta[\J^{(j_m)}\cup (\B\backslash \K)]  
 \theta[\J^{(j_m,j_n)}\cup \K] \partial_{v} \theta[\I\cup (\B\backslash \K)] = 0,
\end{multline}
where $\J^{(j)}=\J \backslash \{j\}$ and $\J^{(j_m,j_n)} = \J\backslash \{j_m,j_n\}$.
All sets $\{\I\cup \K^{(\kappa_l)}\}$ and $\I\cup (\B\backslash \K)$ are arranged in the ascending order, and
$\kappa_1 < \kappa_2 < \dots < \kappa_{2\rFr-1}$ is supposed. 
\end{conj}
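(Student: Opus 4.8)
The plan is to prove the linear independence and the explicit relation by induction on $\rFr$, exactly mimicking the recursive decomposition that proves Propositions~\ref{P:D1thetaLD3}' and~\ref{P:D1thetaLD4}. The base cases $\rFr=2$ and $\rFr=3$ are Proposition~\ref{P:D1thetaLD3}' (relation~\eqref{D1thetaLDG}) and Proposition~\ref{P:D1thetaLD4} (relation~\eqref{D1ThetaLD4a}) respectively, so the content is the inductive step. First I would set up the recursion: given a collection with $\cap_k \I^{(k)}=\I^{[g-\rFr]}$, every $\I^{(k)}=\I\cup\K^{(\kappa_\lFr)}$ shares the subset $\I$ of cardinality $g-\rFr$, and I would fix the spanning set $\{\partial_v\theta[\I\cup\K^{(\kappa_\lFr)}]\mid\lFr=1,\dots,\rFr\}$. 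Following the argument in Theorem~\ref{T:D1thetaLD}, any vector $\partial_v\theta[\I\cup(\B\backslash\K)]$ is reduced to this spanning set by repeatedly applying the elementary two-term rule: a vector built on the index set $\I\cup\{\kappa_i,\dots\}$ with $i\neq 1,2$ is a combination of the two vectors obtained by swapping the offending index for $\kappa_1$ and for $\kappa_2$ via~\eqref{D1thetaLDG}.

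Next I would establish the explicit coefficients. The cleanest route is to avoid iterating~\eqref{D1thetaLDG} symbolically (which would produce a forbidding product of theta quotients) and instead argue that the relation~\eqref{D1ThetaLDN} is \emph{forced} once we know (i) that there is a linear relation among the $\rFr+1$ vectors and (ii) the individual General-Thomae expressions for each $\partial_v\theta[\I\cup\K^{(\kappa_\lFr)}]$ supplied by~\eqref{thomaeNR}. Concretely, I would substitute the formula~\eqref{thomaeNR} for each first derivative theta constant vector, factor out the common prefactor $\epsilon(\det\omega/\pi^g)^{1/2}\Delta(\I)^{1/4}\Delta(\J\cup\B)^{1/4}$, and verify that the proposed coefficients in~\eqref{D1ThetaLDN}, rewritten via second Thomae (in the inverse form~\eqref{Th2Inv}) and FTT Corollary~\ref{C:eJI} as ratios of branch-point differences, produce an identity among elementary symmetric polynomials $s_{j-1}$. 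In effect each theta-coefficient $\theta[\J^{(j_n)}\cup\K^{(\kappa_\lFr)}]\theta[\J^{(j_m)}\cup\K^{(\kappa_\lFr)}]\theta[\J^{(j_m,j_n)}\cup(\B\backslash\K^{(\kappa_\lFr)})]$ collapses, after applying~\eqref{eJI}, to a fourth root of a product of differences $(e_{\kappa_\lFr}-e_k)$, and the alternating sum over $\lFr$ becomes a Laplace/Vandermonde-type cofactor expansion that vanishes identically. The independence of $j_m,j_n$ is inherited from Corollary~\ref{C:eJI}, as already noted there.

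The linear independence of the $\rFr$ spanning vectors I would prove by the same criterion used in Theorem~\ref{T:D1thetaLD}: by the inductive hypothesis a proper subcollection spans a space of one lower dimension, and the assumption $\cap_k\I^{(k)}=\I^{[g-\rFr]}$ (intersection of exactly cardinality $g-\rFr$, not higher) guarantees that no single vector lies in the span of the others, since otherwise the intersection would jump to cardinality $g-\rFr+1$, contradicting the minimality built into the partition. This is precisely the ``if and only if'' mechanism of Theorem~\ref{T:D1thetaLD}, applied here with $\nFr$ running down from $\rFr$.

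The main obstacle I anticipate is the bookkeeping of signs and of the theta-coefficients under the \emph{ascending order of sets}: the alternating signs $(-1)^{\lFr-1}$ in~\eqref{D1ThetaLDN} depend on arranging all the sets $\I\cup\K^{(\kappa_\lFr)}$ and $\I\cup(\B\backslash\K)$ in the prescribed order (with the index of infinity treated as $0$), and tracking how each application of the two-term rule~\eqref{D1thetaLDG} permutes this order is where the combinatorial argument is delicate. I expect that the right-ordering convention of Remark~\ref{R:eOrd} removes the stray eighth roots of unity $\epsilon$, so that after the General-Thomae substitution the only surviving task is to confirm the sign pattern of a single determinantal cofactor expansion; verifying this against the genus $3$--$6$ computations of the Verification subsection provides the independent check.
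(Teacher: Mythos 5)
First, a point of comparison that matters: the statement you are proving is labelled a \emph{Conjecture} in the paper and is left unproven there — the paper only proves the cases $\rFr=2$ and $\rFr=3$ (Propositions~\ref{P:D1thetaLD3}' and~\ref{P:D1thetaLD4}, Appendices~\ref{A:D1thetaLD3} and~\ref{A:D1thetaLD4}) and verifies higher cases numerically. So your proposal must stand on its own, and it does not: it is a strategy outline whose decisive step is asserted rather than established. In the proven cases, after substituting general Thomae~\eqref{thomaeNR} and converting theta quotients to branch-point ratios via FTT Corollaries~\ref{C:eklm} and~\ref{C:eJI}, everything reduces to explicit identities such as $\frac{e_{\kappa_2}-e_{\kappa_1}}{e_{\kappa_3}-e_{\kappa_2}}-\frac{e_{\kappa_3}-e_{\kappa_1}}{e_{\kappa_3}-e_{\kappa_2}}+1=0$ (for $\rFr=2$) and its four-term analogue (for $\rFr=3$). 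For general $\rFr$ one needs a new $(\rFr+1)$-term identity among products of differences $(e_{\kappa_i}-e_{\kappa_j})$, with the signs matched to the ascending-order convention; your claim that the alternating sum in~\eqref{D1ThetaLDN} ``becomes a Laplace/Vandermonde-type cofactor expansion that vanishes identically'' is precisely the content that would turn the conjecture into a theorem, and you never write this identity down, let alone prove it. You also propose two distinct mechanisms — iterating the two-term rule~\eqref{D1thetaLDG} along the lines of Theorem~\ref{T:D1thetaLD}, versus direct Thomae substitution — and complete neither; in particular the sign bookkeeping under repeated application of~\eqref{D1thetaLDG}, which you yourself flag as the delicate point, is exactly where the induction would have to be carried out explicitly.

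Second, your linear-independence argument is invalid. The cardinality of $\cap_{k}\I^{(k)}$ is a purely combinatorial property of the index sets fixed by the partition; a linear dependence among the vectors $\partial_v\theta[\I\cup\K^{(\kappa_\lFr)}]$ cannot make that intersection ``jump to cardinality $g-\rFr+1$,'' so no contradiction arises from assuming one vector lies in the span of the others. Moreover, invoking Theorem~\ref{T:D1thetaLD} here is circular: the paper's proof of that theorem for general $\rFr$ rests on the assertion that the $\rFr$ vectors $\partial_v\theta[\I^{[g-\rFr]}\cup\K^{(\kappa)}]$ form a maximal spanning set — which is the content of this very conjecture, honestly established only for $\rFr=2,3$. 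To close this part you would need an independent argument (for instance, exhibiting a nonsingular $\rFr\times\rFr$ minor via second Thomae~\eqref{thomae2MF}, where the matrix $\omega^t$ applied to the vectors of elementary symmetric polynomials $s_{j-1}(\I\cup\K^{(\kappa_\lFr)})$ reduces independence to the nonvanishing of a Vandermonde-type determinant in the $e_{\kappa_\lFr}$), not a citation of a theorem that presupposes the result.
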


\begin{rem}
Under the assumptions of Conjecture~\ref{C:D1thetaLDH} 
any vector $\partial_{v} \theta[\I\cup \M]$ with $\M$ consisting of $\rFr-1$
indices from $\B$ and $\M\neq \B\backslash \K$ is spanned by $\rFr-\cFr$ vectors, where
$\cFr$ is the cardinality of $\M \cap \K$.
\end{rem}

\subsection{Second order  theta derivatives}\label{ss:2derThetaC}
Let $\I_0 \cup \J_0$ be a partition of the set of $2g+1$ finite branch points with 
$\I_0=\{i_1,\dots i_g\}$, and $\J_0=\{j_1,\dots j_{g+1}\}$, where index $0$ of infinity belongs to $\I_0$ but omitted. 
Characteristics of multiplicity $2$ arise when $3$ or $4$ indices drop from $\I_0$,
see Subsection \ref{ss:CharPart} for a detailed explanation. Let $\K$ denote the set of indices which drop,
then $\I_2 = \I_0 \backslash \K$, and $\J_2 = \J_0 \cup \K$.

With the help of the general and the second Thomae formulas
the following decomposition of the Hesse matrix $\partial^2_v \theta[\I_2]$
can be found.

\begin{teo}\label{T:D2thetaGradRepr}
 Let $\I_0=\{i_1,\,\dots,\,i_{g}\}$ and $\J_0 = \{j_1,\,\dots,\,j_{g+1}\}$ form a partition of the set 
 $\{1$, $2$, $\dots$, $2g+1\}$ of indices of finite branch points of a genus $g$ hyperelliptic curve, 
 and $\I_2 = \I_0\backslash \K$, where $\K$ is a set of cardinality $\kFr=3$ or 4. Then
\begin{gather}\label{TD2Expr}
 \partial^2_{v_{n_1}, v_{n_2}} \theta[\I_2] = 
 \frac{1}{\theta[\I_0]} \sum_{k,l=1}^\kFr R_{k,l} \partial_{v_{n_1}} \theta[\I_0^{(p_k)}] 
 \partial_{v_{n_2}} \theta[\I_0^{(p_l)}],
\end{gather} 
or in the matrix form
\begin{gather}\label{TD2ExprM}
 \partial^2_v \theta[\I_2] = \frac{1}{\theta[\I_0]} \partial_v \theta[\I_1]^t \hat{R} \partial_v \theta[\I_1],
\end{gather}
where $\partial_v \theta[\I_1]$ denotes a $\kFr\times g$ matrix with $(i,j)$-entry equal to 
$\partial_{v_j} \theta[\I_0^{(p_i)}]$, $p_i \in \K$, 
and $\kFr\times \kFr$ matrix $\hat{R}$ has vanishing diagonal entries $R_{k,k}=0$, 
and off-diagonal entries are defined as follows
with arbitrary $j_n$, $j_m\in\J_0$ and $k$, $l$ running from $1$ to $\kFr$
\begin{itemize}
 \item when $\kFr=3$, so that $\K\backslash\{p_k,p_l\}=\{q\}$
\begin{gather}\label{R3M} 
 R_{k,l} = (-1)^{k+l}
 \frac{ \theta[\I_0^{(p_k,p_l \to j_n,j_m)}]
  \theta[\J_0^{(j_n,j_m \to q)}]\theta[\I_0^{(q\to j_m)}] \theta[\I_0^{(q\to j_n)}]}
 {\theta[\J_0^{(j_m)}] \theta[\J_0^{(j_n)}]
 \theta[\I_0^{(p_k,q \to j_n,j_m)}]\theta[\I_0^{(p_l,q \to j_n,j_m)}]},
\end{gather}
\item when $\kFr=4$, so that $\K\backslash\{p_k,p_l\}=\{q_1,q_2\}$,
\begin{multline}\label{R4M} 
 R_{k,l} = (-1)^{k+l}
 \theta[\I_0^{(p_k,p_l \to j_n,j_m)}] \theta[\I_0^{(q_1,q_2 \to j_n,j_m)}]
 \times \\ \times
 \frac{\theta[\J_0^{(j_n,j_m \to p_k)}]\theta[\J_0^{(j_n,j_m \to p_l)}]}
 {\big(\theta[\J_0^{(j_m)}] \theta[\J_0^{(j_n)}]\big)^2} 
 \prod_{q \in \K\backslash\{p_k,p_l\}} 
 \frac{\theta[\I_0^{(q\to j_m)}] \theta[\I_0^{(q\to j_n)}]}
 {\theta[\I_0^{(p_k,q \to j_n,j_m)}]\theta[\I_0^{(p_l,q \to j_n,j_m)}]}.
\end{multline}
\end{itemize}
\end{teo}

\begin{rem}
Matrix $\hat{R}$ has one of the following forms
\begin{itemize}
 \item in the case of three dropped indices ($\card\K=3$)
\begin{gather}\label{RK3}
 \hat{R} = \begin{pmatrix} 0 & R_{1,2} & R_{1,3} \\ R_{1,2} & 0 & R_{2,3} \\
 R_{1,3} & R_{2,3} & 0 \end{pmatrix},
\end{gather}
\item in the case of four dropped indices ($\card\K=4$)
\begin{gather}\label{RK4}
 \hat{R} = \begin{pmatrix} 
 0 & R_{1,2} & R_{1,3} & R_{1,4} \\
 R_{1,2} & 0 & R_{2,3} & R_{2,4} \\
 R_{1,3} & R_{2,3} & 0 & R_{3,4} \\
 R_{1,4} & R_{2,4} & R_{3,4} & 0
 \end{pmatrix},
\end{gather}
\end{itemize}
the sign of $R_{k,l}$ is $(-1)^{k+l}$ in the case of ascending order of indices in $\K$.
\end{rem}

\begin{proof}
In the case of multiplicity $\mFr=2$ formula \eqref{thomaeNR} acquires the form
\begin{multline}\label{thomae3Inv}
  \frac{\partial^2_{v_{n_1}, v_{n_2}} \theta\big[\I_2\big]}{\theta[\I_0]}
  = \epsilon \prod_{\kappa\in \K}\frac{\big(\prod_{j \in \J_0} (e_\kappa - e_j)\big)^{1/4}}
 {\big(\prod_{\iota \in \I_{2}} (e_\kappa - e_{\iota}) \big)^{1/4}} \times \\ \times
 \sum_{\substack{p_1,p_2 \in \K \\ p_1\neq p_2}}
 \frac{\sum_{j=1}^g(-1)^{j-1} s_{j-1}^{(p_1)} \omega_{jn_1} 
 \sum_{j=1}^g(-1)^{j-1} s_{j-1}^{(p_2)} \omega_{jn_2}}
 {\prod_{\kappa\in \K\backslash\{p_1,p_2\}} (e_{p_1}-e_\kappa)(e_{p_2}-e_\kappa)},
\end{multline}
which holds when $\K$ contains $3$ or $4$ indices.
Applying the second Thomae theorem in the form \eqref{Th2Inv}, obtain
\begin{multline}\label{thomae3Grad}
  \frac{\partial^2_{v_{n_1}, v_{n_2}} \theta\big[\I_2\big]} {\theta[\I_0]}
= \sum_{\substack{p_1,p_2 \in \K \\ p_1\neq p_2}} 
 \frac{\epsilon_{p_1,p_2} (e_{p_1}-e_{p_2})^{1/2}}
 {\prod_{\kappa\in \K\backslash\{p_1,p_2\}} (e_{p_1}-e_\kappa)^{1/2}(e_{p_2}-e_\kappa)^{1/2}} \times \\ \times
 \prod_{\kappa\in \K\backslash\{p_1,p_2\}} \frac{\big(\prod_{j \in \J_0} (e_\kappa - e_j)\big)^{1/4}}
 {\big(\prod_{\iota \in \I_{2}\cup\{p_1,p_2\}} (e_\kappa - e_{\iota}) \big)^{1/4}}
 \frac{1}{\theta[\I_0]^2} \partial_{v_{n_1}} \theta[\I_0^{(p_1)}] 
 \partial_{v_{n_2}} \theta[\I_0^{(p_2)}],
\end{multline}
where $\epsilon_{p_1,p_2}^8=1$. Next, consider separately 
the cases of $\kFr$ equal to $3$ and $4$.

Let $\kFr=3$, so $\K=\{\kappa_1,\kappa_2,\kappa_3\}$.
For a pair $\{\kappa_1,\kappa_2\} \in \K$ with the help of FTT Corollaries~\ref{C:eklm} and \ref{C:eJI}
with the normal order of elements $\{e_k\}$
one finds the multiple at $\partial_{v_{n_1}} \theta[\I_0^{(\kappa_1)}] \partial_{v_{n_2}} \theta[\I_0^{(\kappa_2)}]$,
which is
\begin{multline}\label{thomae3GradK3}
 \Ord \frac{ (e_{\kappa_1} - e_{\kappa_2})^{1/2}}
 {(e_{\kappa_1} - e_{\kappa_3})^{1/2}(e_{\kappa_2} - e_{\kappa_3})^{1/2}}
 \frac{\big(\prod_{j \in \J_0} (e_{\kappa_3} - e_j)\big)^{1/4}}
 {\big(\prod_{\iota \in \I_{0}^{(\kappa_3)}} (e_{\kappa_3} - e_{\iota}) \big)^{1/4}} \\
 = \frac{\theta[\I_0^{(\kappa_1,\kappa_2 \to j_n,j_m)}]
 \theta[\I_0^{(\kappa_3\to j_m)}] \theta[\I_0^{(\kappa_3\to j_n)}] \theta[\J_0^{(j_n,j_m \to \kappa_3)}]}
 {\theta[\I_0^{(\kappa_2,\kappa_3 \to j_n,j_m)}]\theta[\I_0^{(\kappa_3,\kappa_1 \to j_n,j_m)}] 
 \theta[\J_0^{(j_m)}] \theta[\J_0^{(j_n)}]}.
\end{multline}
Here $\I_{2}\cup\{\kappa_1,\kappa_2\} = \I_0^{(\kappa_3)}$. 
Finally, with an arbitrary pair $j_n,j_m \in \J_0$ the following holds
\begin{multline}\label{D2thetaK3}
 \partial^2_{v_{n_1}, v_{n_2}} \theta\big[\I_2\big] = 
 \frac{1}{\theta[\I_0] \theta[\J_0^{(j_m)}] \theta[\J_0^{(j_n)}]} \times \\ \times 
 \sum_{\substack{p_1,p_2 \in \K \\ p_1\neq p_2}}  \epsilon_{p_1,p_2}
 \frac{\theta[\I_0^{(p_1,p_2 \to j_n,j_m)}]
 \theta[\I_0^{(p_3\to j_m)}] \theta[\I_0^{(p_3\to j_n)}] \theta[\J_0^{(j_n,j_m \to p_3)}]}
 {\theta[\I_0^{(p_2,p_3 \to j_n,j_m)}]\theta[\I_0^{(p_3,p_1 \to j_n,j_m)}]}
 \times \\ \times
 \partial_{v_{n_1}} \theta[\I_0^{(p_1)}] \partial_{v_{n_2}} \theta[\I_0^{(p_2)}],
\end{multline}
where $p_1$, $p_2$, $p_3$ denote different elements of $\K$ such that $\{p_3\} = \K \backslash \{p_1,p_2\}$.
In fact, at $\kappa_1 \leqslant \kappa_2 \leqslant \kappa_3$ the multiplier $\epsilon_{p_1,p_2}$  can be taken as 
follows: $\epsilon_{\kappa_1,\kappa_2}=\epsilon_{\kappa_2,\kappa_3}=-1$, and
$\epsilon_{\kappa_1,\kappa_3}=1$.

Let $\kFr=4$, so $\K=\{\kappa_1,\kappa_2,\kappa_3,\kappa_4\}$.
Denote $\I_{2}\cup\{\kappa_1,\kappa_2\} = \I_0^{(\kappa_3,\kappa_4)}$ and so on. 
Then apply FTT Corollaries~\ref{C:eklm} and \ref{C:eJI} with the normal order. 
In particular, with a pair $\{\kappa_1,\kappa_2\}\in \K$
the multiple of $\partial_{v_{n_1}} \theta[\I_0^{(\kappa_1)}] \partial_{v_{n_2}} \theta[\I_0^{(\kappa_2)}]$
gets the form
\begin{multline}\label{thomae3GradK4}
 \Ord \frac{\big((e_{\kappa_2} - e_{\kappa_1})(e_{\kappa_4} - e_{\kappa_3})\big)^{1/2}}
 {\big((e_{\kappa_3} - e_{\kappa_2})(e_{\kappa_4} - e_{\kappa_2})\big)^{1/2}}
 \frac{\big(\prod_{j \in \J_0} (e_{\kappa_3} - e_j)\big)^{1/4}}
 {\big((e_{\kappa_3} - e_{\kappa_1})^2\prod_{\iota \in \I_{0}^{(\kappa_3)}} (e_{\kappa_3} - e_{\iota}) \big)^{1/4}} 
  \times \\ \times
 \frac{\big(\prod_{j \in \J_0} (e_{\kappa_4} - e_j)\big)^{1/4}}
 {\big((e_{\kappa_4} - e_{\kappa_1})^2\prod_{\iota \in \I_{0}^{(\kappa_4)}} (e_{\kappa_4} - e_{\iota}) \big)^{1/4}} \\
 = \frac{\theta[\I_0^{(\kappa_1,\kappa_2 \to j_n,j_m)}]\theta[\I_0^{(\kappa_3,\kappa_4 \to j_n,j_m)}]
 \theta[\I_0^{(\kappa_3\to j_m)}] \theta[\I_0^{(\kappa_3\to j_n)}]}
 { \theta[\I_0^{(\kappa_1,\kappa_3 \to j_n,j_m)}] \theta[\I_0^{(\kappa_2,\kappa_3 \to j_n,j_m)}] } \times \\ \times
 \frac{\theta[\I_0^{(\kappa_4\to j_m)}] \theta[\I_0^{(\kappa_4\to j_n)}]
 \theta[\J_0^{(j_n,j_m \to \kappa_1)}] \theta[\J_0^{(j_n,j_m \to \kappa_2)}]}
 {\theta[\I_0^{(\kappa_1,\kappa_4 \to j_n,j_m)}] \theta[\I_0^{(\kappa_2,\kappa_4 \to j_n,j_m)}]
 \theta[\J_0^{(j_m)}]^2 \theta[\J_0^{(j_n)}]^2},
\end{multline}
where the following relations are used
\begin{gather}
 \bigg( \Ord \frac{e_{\kappa_2} - e_{\kappa_1}}{e_{\kappa_3} - e_{\kappa_2}} \bigg)^{1/2} = 
 \frac{\theta[\I_0^{(\kappa_1,\kappa_2 \to j_n,j_m)}] \theta[\J_0^{(j_n,j_m \to \kappa_3)}]}
 {\theta[\I_0^{(\kappa_2,\kappa_3 \to j_n,j_m)}] \theta[\J_0^{(j_n,j_m \to \kappa_1)}]},\notag \\
  \bigg( \Ord \frac{e_{\kappa_4} - e_{\kappa_3}}{e_{\kappa_4} - e_{\kappa_2}} \bigg)^{1/2} = 
 \frac{\theta[\I_0^{(\kappa_3,\kappa_4 \to j_n,j_m)}] \theta[\J_0^{(j_n,j_m \to \kappa_2)}]}
 {\theta[\I_0^{(\kappa_2,\kappa_4 \to j_n,j_m)}] \theta[\J_0^{(j_n,j_m \to \kappa_3)}] },\label{eRels} \\
 \bigg( \Ord \frac{\prod_{j \in \J_0} (e_{\kappa_3} - e_j)}
 {(e_{\kappa_3} - e_{\kappa_1})^2\prod_{\iota \in \I_{0}^{(\kappa_3)}} (e_{\kappa_3} - e_{\iota}) } \bigg)^{1/4}
 = \frac{\theta[\I_0^{(\kappa_3\to j_n)}] \theta[\I_0^{(\kappa_3\to j_m)}] \theta[\J_0^{(j_n,j_m \to \kappa_1)}] }
 {\theta[\I_0^{(\kappa_1,\kappa_3\to j_n,j_m)}] \theta[\J_0^{(j_m)}] \theta[\J_0^{(j_n)}]}, \notag \\
  \bigg( \Ord \frac{\prod_{j \in \J_0} (e_{\kappa_4} - e_j)}
 {(e_{\kappa_4} - e_{\kappa_1})^2\prod_{\iota \in \I_{0}^{(\kappa_4)}} (e_{\kappa_4} - e_{\iota}) }  \bigg)^{1/4}
 = \frac{\theta[\I_0^{(\kappa_4\to j_n)}] \theta[\I_0^{(\kappa_4\to j_m)}]\theta[\J_0^{(j_n,j_m \to \kappa_1)}]}
 {\theta[\I_0^{(\kappa_1,\kappa_4\to j_n,j_m)}] \theta[\J_0^{(j_m)}] \theta[\J_0^{(j_n)}]}, \notag 
\end{gather}
and $j_m$, $j_n$ are chosen the same in all the relations.
Finally,
\begin{multline}\label{D2thetaK4}
  \partial^2_{v_{n_1}, v_{n_2}} \theta\big[\I_2\big] = 
  \frac{\epsilon}{\theta[\I_0]\theta[\J_0^{(j_m)}]^2 \theta[\J_0^{(j_n)}]^2} \times \\  \times
  \sum_{\substack{p_1,p_2 \in \K \\ p_1\neq p_2}}  
 (-1)^{p_1+ p_2} \frac{\theta[\I_0^{(p_1,p_2 \to j_n,j_m)}]\theta[\I_0^{(p_3,p_4 \to j_n,j_m)}]
 \theta[\I_0^{(p_3\to j_m)}] \theta[\I_0^{(p_3\to j_n)}]}
 { \theta[\I_0^{(p_1,p_3 \to j_n,j_m)}] \theta[\I_0^{(p_2,p_3 \to j_n,j_m)}] } \times \\ \times
 \frac{\theta[\I_0^{(p_4\to j_m)}] \theta[\I_0^{(p_4\to j_n)}]
 \theta[\J_0^{(j_n,j_m \to p_1)}] \theta[\J_0^{(j_n,j_m \to p_2)}]}
 {\theta[\I_0^{(p_1,p_4 \to j_n,j_m)}] \theta[\I_0^{(p_2,p_4 \to j_n,j_m)}]} \times \\ \times
 \partial_{v_{n_1}} \theta[\I_0^{(p_1)}] \partial_{v_{n_2}} \theta[\I_0^{(p_2)}] . 
\end{multline}
where $\{p_3,p_4\} = \K \backslash \{p_1,p_2\}$ for each pair $\{p_1,p_2\}$.
In fact, the multiplier $\epsilon$ equals~$1$, and remains the same for all partitions $\I_2 \cup \J_2$ 
in all genera. This accords with the statement of Remark~\ref{R:eOrd} that multiplier $\epsilon$
in the general Thomae formula for second order theta derivatives is the same in all genera.
\end{proof}

\begin{rem}\label{R:2Form}
Theorem~\ref{T:D2thetaGradRepr} provides expressions for second order theta derivatives 
in terms of theta constants and first order theta derivatives. This can be considered as a \emph{generalization
of Jacobi's derivative formula}. 
Each entry of $\partial^2 \theta[\I_2]$ with $\I_2=\I_0\backslash \K$ equals a symmetric bilinear form
with matrix $\hat{R}$, namely:
\begin{gather}\label{D2thetaEntry}
 \partial_{v_{n_1},v_{n_2}}^2 \theta[\I_2] = \partial_{v_{n_1}} \theta[\I_1]^t \hat{R}\, \partial_{v_{n_2}} \theta[\I_1],
\end{gather}
where in the case of $\K=\{i_1,i_2,i_3\} \subset \I_0$
\begin{align*}
 &\partial_{v_{n}} \theta[\I_1] = \big(\partial_{v_{n}} \theta[\I_0^{(i_1)}],
 \partial_{v_{n}} \theta[\I_0^{(i_2)}], \partial_{v_{n}} \theta[\I_0^{(i_3)}]\big)^t& 
\intertext{or in the case of $\K=\{i_1,i_2,i_3,i_4\}\subset \I_0$}
 &\partial_{v_{n}} \theta[\I_1] = \big(\partial_{v_{n}} \theta[\I_0^{(i_1)}],
 \partial_{v_{n}} \theta[\I_0^{(i_2)}], \partial_{v_{n}} \theta[\I_0^{(i_3)}], \partial_{v_{n}} \theta[\I_0^{(i_4)}]\big)^t.&
\end{align*}
Note that first order theta derivatives $\{\partial_v \theta[\I_0^{(i)}]\}_{i\in\K}$
involved into \eqref{D2thetaEntry} form a set of linearly independent vectors.
\end{rem}

\begin{exam}
In the case of genus $3$ hyperelliptic curve there is a unique partition $\I_2=\emptyset$ of multiplicity $2$,
which is obtained by $\binom{7}{3}=35$ ways from partitions $\I_0\cup \J_0$ of $7$ indices.
With $\I_0=\{i_1,i_2,i_3\}$, $i_1<i_2<i_3$, and $\J_0=\{j_1,j_2,j_3,j_4\}$, and $j_m=j_1$, $j_n=j_2$
relation \eqref{TD2ExprM} reads as
\begin{multline}\label{DI2RelG3}
 \partial_{v_{n_1},v_{n_2}}^2 \theta^{\emptyset} = 
 \frac{1}{\theta^{\{i_1,i_2,i_3\}}\theta^{\{j_2,j_3,j_4\}}\theta^{\{j_1,j_3,j_4\}}} \times \\ \times
 \Big(- \theta^{\{i_3,j_1,j_2\}}\theta^{\{i_3,j_3,j_4\}}\theta^{\{i_1,i_2,j_1\}}\theta^{\{i_1,i_2,j_2\}}
 \big(\theta^{\{i_1,j_1,j_2\}}\theta^{\{i_2,j_1,j_2\}}\big)^{-1} \times \\ \times
 \big( \partial_{v_{n_1}} \theta^{\{i_2,i_3\}} \partial_{v_{n_2}} \theta^{\{i_1,i_3\}} 
 + \partial_{v_{n_2}} \theta^{\{i_2,i_3\}} \partial_{v_{n_1}} \theta^{\{i_1,i_3\}}\big)\\
 + \theta^{\{i_2,j_1,j_2\}}\theta^{\{i_2,j_3,j_4\}}\theta^{\{i_1,i_3,j_1\}}\theta^{\{i_1,i_3,j_2\}}
 \big(\theta^{\{i_1,j_1,j_2\}}\theta^{\{i_3,j_1,j_2\}}\big)^{-1} \times \\ \times
 \big( \partial_{v_{n_1}} \theta^{\{i_2,i_3\}} \partial_{v_{n_2}} \theta^{\{i_1,i_2\}} 
 + \partial_{v_{n_2}} \theta^{\{i_2,i_3\}} \partial_{v_{n_1}} \theta^{\{i_1,i_2\}}\big)\\
 - \theta^{\{i_1,j_1,j_2\}}\theta^{\{i_1,j_3,j_4\}}\theta^{\{i_2,i_3,j_1\}}\theta^{\{i_2,i_3,j_2\}}
 \big(\theta^{\{i_2,j_1,j_2\}}\theta^{\{i_3,j_1,j_2\}}\big)^{-1} \times \\ \times
 \big( \partial_{v_{n_1}} \theta^{\{i_1,i_3\}} \partial_{v_{n_2}} \theta^{\{i_1,i_2\}} 
 + \partial_{v_{n_2}} \theta^{\{i_1,i_3\}} \partial_{v_{n_1}} \theta^{\{i_1,i_2\}}\big).
\end{multline}
Evidently, second order theta derivatives are represented by expressions quadratic in 
first order theta derivatives. 

Moreover, each entry $\partial_{v_{n_1},v_{n_2}}^2 \theta^{\emptyset}$ 
is represented by $35$ expressions produced from different $\I_0$,
and these expressions are all equal to each other. Some relations of this form are given in Appendix~\ref{A:D2thetaLD4Ex}.
\end{exam}

\begin{exam}
In the case of genus $4$ among half-period characteristics of multiplicity~$2$ there is one
whose partition is obtained by dropping $4$ indices from $\I_0$, that is $\I_2=\emptyset$. Other  
characteristics of multiplicity $2$ are obtained by dropping $3$ indices from $\I_0$, 
then $\I_2 = \{\iota\}$.

Let $\I_0=\{i_1$, $i_2$, $i_3$, $\iota\}$ with $i_1<i_2<i_3$, and $\J_0=\{j_1$, $j_2$, $j_3$, $j_4$, $j_5\}$.
By dropping indices $\{i_1,i_2,i_3\}=\K$ one gets 
a representation for $\partial_v^2 \theta^{\{\iota\}}$ of the form
\eqref{TD2ExprM} (here $j_m=j_1$, $j_n=j_2$)
\begin{multline}\label{DI2RelG4}
 \partial_{v_{n_1},v_{n_2}}^2 \theta^{\{\iota\}} = 
 \frac{1}{\theta^{\{i_1,i_2,i_3,\iota\}}\theta^{\{j_2,j_3,j_4,j_5\}}\theta^{\{j_1,j_3,j_4,j_5\}}} \times \\ \times
 \Big(- \theta^{\{i_3,j_1,j_2,\iota\}}\theta^{\{i_3,j_3,j_4,j_5\}}\theta^{\{i_1,i_2,j_1,\iota\}}\theta^{\{i_1,i_2,j_2,\iota\}}
 \big(\theta^{\{i_1,j_1,j_2,\iota\}}\theta^{\{i_2,j_1,j_2,\iota\}}\big)^{-1} \times \\ \times
 \big( \partial_{v_{n_1}} \theta^{\{i_2,i_3,\iota\}} \partial_{v_{n_2}} \theta^{\{i_1,i_3,\iota\}} 
 + \partial_{v_{n_2}} \theta^{\{i_2,i_3,\iota\}} \partial_{v_{n_1}} \theta^{\{i_1,i_3,\iota\}}\big)\\
 + \theta^{\{i_2,j_1,j_2,\iota\}}\theta^{\{i_2,j_3,j_4,j_5\}}\theta^{\{i_1,i_3,j_1,\iota\}}\theta^{\{i_1,i_3,j_2,\iota\}}
 \big(\theta^{\{i_1,j_1,j_2,\iota\}}\theta^{\{i_3,j_1,j_2,\iota\}}\big)^{-1} \times \\ \times
 \big( \partial_{v_{n_1}} \theta^{\{i_2,i_3,\iota\}} \partial_{v_{n_2}} \theta^{\{i_1,i_2,\iota\}} 
 + \partial_{v_{n_2}} \theta^{\{i_2,i_3,\iota\}} \partial_{v_{n_1}} \theta^{\{i_1,i_2,\iota\}}\big)\\
 - \theta^{\{i_1,j_1,j_2,\iota\}}\theta^{\{i_1,j_3,j_4,j_5\}}\theta^{\{i_2,i_3,j_1,\iota\}}\theta^{\{i_2,i_3,j_2,\iota\}}
 \big(\theta^{\{i_2,j_1,j_2,\iota\}}\theta^{\{i_3,j_1,j_2,\iota\}}\big)^{-1} \times \\ \times
 \big( \partial_{v_{n_1}} \theta^{\{i_1,i_3,\iota\}} \partial_{v_{n_2}} \theta^{\{i_1,i_2,\iota\}} 
 + \partial_{v_{n_2}} \theta^{\{i_1,i_3,\iota\}} \partial_{v_{n_1}} \theta^{\{i_1,i_2,\iota\}}\big).
\end{multline}
Next, let $\I_0=\{i_1$, $i_2$, $i_3$, $i_4\}$ with $i_1<i_2<i_3<i_4$, and $\J_0=\{j_1$, $j_2$, $j_3$, $j_4$, $j_5\}$.
A representation for $\partial^2_v\theta^{\emptyset}$ is obtained from \eqref{TD2ExprM} when all four indices of $\I_0$
are dropped.
Examples of formulas for these second derivative theta constants are given in Appendix~\ref{A:D2thetaLD4Ex}.
\end{exam}

\begin{rem}\label{R:multrepr}
 Each entry in \eqref{TD2ExprM} has many representations depending on a choice of $\K$ 
 which is subtracted from $\I_0$, and
 a choice of $j_n$ and $j_m$. All these expressions are equivalent due to Propositions~\ref{P:D1thetaLD4} and \ref{P:D1thetaLD3}'.
 In the case of three dropped indices this is stated by the following
\end{rem}
\begin{prop}\label{P:D2thetaLD3}
Let $\I=\{i_1$, $i_2$, $\dots$, $i_{g-3}\}$, $\J=\{j_1$, $j_2$, $\dots$, $j_{g}\}$,
and $\I\cup \J \cup\{p_1$, $p_2$, $p_3$, $p_4\}$ be a partition 
of the set $\{1$, $2$, $\dots$, $2g+1\}$ of indices of finite branch points.
Then the right hand sides of \eqref{TD2ExprM} with two partitions
$\big(\I\cup \{p_1,p_2,p_3\}\big) \cup \big(\J \cup \{p_4\}\big)$, 
and $\big(\I\cup \{p_1,p_2,p_4\}\big) \cup \big(\J \cup \{p_3\}\big)$ are equal.
\end{prop}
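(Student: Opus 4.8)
The plan is to exploit the fact that both partitions describe the \emph{same} multiplicity-$2$ characteristic $[\I_2]=[\I]$. Writing $\K=\{p_1,p_2,p_3\}$ for the first partition and $\K'=\{p_1,p_2,p_4\}$ for the second, one has in both cases $\I_2=\I_0\backslash\K=\I$ and $\J_2=\J\cup\{p_1,p_2,p_3,p_4\}$. Hence the left-hand side $\partial^2_v\theta[\I_2]$ of \eqref{Th2InvM} is literally the same Hessian for both, and the content of the statement is that the two right-hand sides, which are quadratic forms built from \emph{different} triples of first derivative theta constants, coincide. The first triple consists of $\partial_v\theta[\I\cup\{p_2,p_3\}]$, $\partial_v\theta[\I\cup\{p_1,p_3\}]$, $\partial_v\theta[\I\cup\{p_1,p_2\}]$ with coefficients \eqref{R3M}; the second of $\partial_v\theta[\I\cup\{p_2,p_4\}]$, $\partial_v\theta[\I\cup\{p_1,p_4\}]$, $\partial_v\theta[\I\cup\{p_1,p_2\}]$, the two triples sharing only $\partial_v\theta[\I\cup\{p_1,p_2\}]$.

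First I would reduce both expressions to a common basis. Since $\I$ has cardinality $g-3$, all six vectors $\partial_v\theta[\I\cup\{p_i,p_k\}]$ span a space of rank $3$ by Theorem~\ref{T:D1thetaLD}, and each of the two triples is linearly independent by Proposition~\ref{P:D1thetaLD4}. Applying Proposition~\ref{P:D1thetaLD3}' with common core $\I\cup\{p_2\}$ expresses $\partial_v\theta[\I\cup\{p_2,p_4\}]$ as an explicit theta-constant combination of $\partial_v\theta[\I\cup\{p_1,p_2\}]$ and $\partial_v\theta[\I\cup\{p_2,p_3\}]$; likewise, with core $\I\cup\{p_1\}$, it expresses $\partial_v\theta[\I\cup\{p_1,p_4\}]$ through $\partial_v\theta[\I\cup\{p_1,p_2\}]$ and $\partial_v\theta[\I\cup\{p_1,p_3\}]$. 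Substituting these two relations into the second right-hand side turns it into a quadratic form in the \emph{first} triple, the same basis used by the first right-hand side.

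The proof is then completed by comparing coefficients. Because $\partial_v\theta[\I\cup\{p_2,p_3\}]$, $\partial_v\theta[\I\cup\{p_1,p_3\}]$, $\partial_v\theta[\I\cup\{p_1,p_2\}]$ are linearly independent vectors in $\Complex^g$ with $g\geqslant 3$, their symmetric products form a linearly independent family, so two quadratic forms in this basis agree if and only if their coefficients agree. This produces a short list of theta-constant identities: the self-coupling coefficient of $\partial_v\theta[\I\cup\{p_1,p_2\}]$ generated by the substitution must vanish (no such diagonal term is present on the first side, since $\hat{R}$ has zero diagonal), while the three off-diagonal coefficients must reproduce the entries $R_{k,l}$ of \eqref{R3M} attached to the first partition. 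Each of these identities is a relation among ratios of branch-point differences, which I would verify directly from FTT Corollaries~\ref{C:eklm} and \ref{C:eJI} together with the explicit coefficients furnished by \eqref{D1thetaLDG}.

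The main obstacle is precisely this coefficient bookkeeping: keeping the signs $\varepsilon_{p_k,p_l}$ consistent with the prescribed ascending order of $\K$ and $\K'$, and collapsing the resulting products of theta constants, with their $j_m,j_n$ dependence, onto the target coefficients. A conceptual shortcut worth recording is that both right-hand sides are obtained by applying the second Thomae theorem to one and the same period expression \eqref{thomae3Inv}, which by Theorem~\ref{T:ThN} is independent of whether one chooses $\K=\{p_1,p_2,p_3\}$ or $\K'=\{p_1,p_2,p_4\}$; this already guarantees the equality and can be used to confirm the identities emerging from the coefficient comparison.
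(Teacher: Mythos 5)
Your proposal follows essentially the same route as the paper's own proof in Appendix~\ref{A:D2thetaLD3}: there too, the relations of Proposition~\ref{P:D1thetaLD3}' with common cores $\I\cup\{p_1\}$ and $\I\cup\{p_2\}$ are used to eliminate $\partial_v\theta[\I\cup\{p_1,p_4\}]$ and $\partial_v\theta[\I\cup\{p_2,p_4\}]$ from the second representation, after which the resulting coefficients are collapsed onto those of the first representation (with the induced diagonal term shown to vanish) via the branch-point ratios supplied by FTT Corollary~\ref{C:eklm}. Your plan is correct and matches the paper's argument.
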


In the case of four dropped indices the following holds
\begin{prop}\label{P:D2thetaLD4}
Let $\I=\{i_1$, $i_2$, $\dots$, $i_{g-4}\}$, $\J=\{j_1$, $j_2$, $\dots$, $j_{g}\}$,
and $\I\cup \J \cup\{p_1$, $p_2$, $p_3$, $p_4$, $p_5\}$ be a partition 
of the set $\{1$, $2$, $\dots$, $2g+1\}$ of indices of finite branch points.
Then the right hand sides of \eqref{TD2ExprM} with two partitions
$\big(\I\cup \{p_1,p_2,p_3,p_4\}\big) \cup \big(\J \cup \{p_5\}\big)$, 
and $\big(\I\cup \{p_1,p_2,p_3,p_5\}\big) \cup \big(\J \cup \{p_4\}\big)$ are equal.
\end{prop}

\subsection{Third order theta derivatives}\label{ss:3derThetaC}
Recall that $\I_0 \cup \J_0$ denotes a partition of the set of $2g+1$ finite branch points with 
$\I_0=\{i_1,\dots i_g\}$, and $\J_0=\{j_1,\dots j_{g+1}\}$. 
A characteristic $[\I_3]$ of multiplicity $3$ corresponds to $\I_3$
obtained from $\I_0$ by dropping $5$ or $6$ indices.
Let $\K$ denote the set of indices which drop,
then $\I_3 = \I_0 \backslash \K$, and $\J_3 = \J_0 \cup \K$.

Third order theta derivatives are expressed in terms of theta constants and first order theta derivatives
as shown in the following
\begin{teo}\label{T:D3thetaGradRepr}
 Let $\I_0=\{i_1$, $\dots$, $i_{g}\}$ and $\J_0 = \{j_1$, $\dots$, $j_{g+1}\}$ form a partition of 
 the set $\{1$, $2$, $\dots$, $2g+1\}$ of indices of finite branch points of a genus $g$ hyperelliptic curve, 
 and $\I_3 = \I_0\backslash \K$, where $\K$ is a set of cardinality $\kFr=5$ or $6$. Then
\begin{multline}\label{Th3InvM}
 \partial^3_{v_{n_1},v_{n_2},v_{n_3}} \theta[\I_3] \\ = \frac{-1}{\theta[\I_0]^2} \sum_{k_1,k_2,k_3=1}^\kFr 
  R_{k_1,k_2,k_3} \partial_{v_{n_1}} \theta[\I_0^{(p_{k_1})}] 
  \partial_{v_{n_2}} \theta[\I_0^{(p_{k_2})}]
  \partial_{v_{n_3}} \theta[\I_0^{(p_{k_3})}],
\end{multline}
where $\I_0^{(p)}=\I_0 \backslash \{p\}$, and $R_{k_1,k_2,k_3}$ form a symmetric tensor of order~$3$ 
with vanishing diagonal entries $R_{k,k,k}=R_{k,k,l}=R_{k,l,k}=R_{l,k,k}=0$, 
and off-diagonal entries defined as follows
with arbitrary $j_n$, $j_m\in\J_0$
\begin{itemize}
 \item when $\kFr=5$, so that $\{q_1, q_2\}=\K\backslash\{p_{k_1},$
 $p_{k_2},$ $p_{k_3}\}$
\begin{multline}\label{R5M} 
 R_{k_1,k_2,k_3} = (-1)^{p_{k_1}+p_{k_2}+p_{k_3}}
 \frac{\theta[\I_0^{(q_1,q_2 \to j_n,j_m)}] }{\big(\theta[\J_0^{(j_m)}]\theta[\J_0^{(j_n)}]\big)^{2}} \times \\ \times 
 \theta[\I_0^{(p_{k_1},p_{k_2} \to j_n,j_m)}]\theta[\I_0^{(p_{k_1},p_{k_3} \to j_n,j_m)}]
 \theta[\I_0^{(p_{k_2},p_{k_3} \to j_n,j_m)}] \times \\ \times
 \prod_{q\in \K\backslash\{p_{k_1},p_{k_2},p_{k_3}\}} 
 \frac{\theta[\I_0^{(q\to j_m)}] \theta[\I_0^{(q\to j_n)}] \theta[\J_0^{(j_n,j_m \to q)}]}
 {\theta[\I_0^{(p_{k_1},q \to j_n,j_m)}]\theta[\I_0^{(p_{k_2},q \to j_n,j_m)}]\theta[\I_0^{(p_{k_3},q \to j_n,j_m)}]},
\end{multline}
\item when $\kFr=6$, so that $\{q_1,q_2,q_3\}=\K\backslash\{p_{k_1},p_{k_2},p_{k_3}\}$,
\begin{multline}\label{R6M} 
 R_{k_1,k_2,k_3} = (-1)^{p_{k_1}+p_{k_2}+p_{k_3}}
 \frac{1}{\big(\theta[\J_0^{(j_m)}] \theta[\J_0^{(j_n)}]\big)^3}  
 \theta[\I_0^{(p_{k_1},p_{k_2} \to j_n,j_m)}]  \times \\ \times
 \theta[\I_0^{(p_{k_1},p_{k_3} \to j_n,j_m)}]
 \theta[\I_0^{(p_{k_2},p_{k_3} \to j_n,j_m)}]  \theta[\I_0^{(q_1,q_2 \to j_n,j_m)}] \times \\ \times
 \theta[\I_0^{(q_1,q_3 \to j_n,j_m)}] \theta[\I_0^{(q_2,q_3 \to j_n,j_m)}] 
 \prod_{i=1}^3 \theta[\J_0^{(j_n,j_m \to p_{k_i})}] \times \\ \times
 \prod_{q\in \K\backslash\{p_{k_1},p_{k_2},p_{k_3}\}} 
 \frac{\theta[\I_0^{(q\to j_m)}] \theta[\I_0^{(q\to j_n)}]}
 {\theta[\I_0^{(p_{k_1},q \to j_n,j_m)}] \theta[\I_0^{(p_{k_2},q \to j_n,j_m)}]
 \theta[\I_0^{(p_{k_3},q \to j_n,j_m)}]}.
\end{multline}
\end{itemize}
\end{teo}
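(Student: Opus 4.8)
The plan is to follow verbatim the strategy used to prove Theorem~\ref{T:D2thetaGradRepr}, now carried out at multiplicity $\mFr=3$. First I would specialize the reduced general Thomae formula \eqref{thomaeNR} to $\mFr=3$, which reads
\begin{multline*}
 \frac{\partial^3_{v_{n_1},v_{n_2},v_{n_3}} \theta[\I_3]}{\theta[\I_0]}
 = \epsilon \prod_{\kappa\in\K} \frac{\big(\prod_{j\in\J_0}(e_\kappa-e_j)\big)^{1/4}}
 {\big(\prod_{\iota\in\I_3}(e_\kappa-e_\iota)\big)^{1/4}} \times \\ \times
 \sum_{\substack{p_1,p_2,p_3\in\K \\ \text{all different}}}
 \prod_{i=1}^3 \frac{\sum_{j=1}^g (-1)^{j-1} s_{j-1}(\I_0^{(p_i)})\omega_{jn_i}}
 {\prod_{k\in\K\backslash\{p_1,p_2,p_3\}}(e_{p_i}-e_k)},
\end{multline*}
where I have used that $\I_3\cup\K^{(p_i)}=\I_0^{(p_i)}$. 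I would then apply the second Thomae theorem in the inverse form \eqref{Th2Inv} to each of the three inner linear combinations of the $\omega_{jn_i}$, replacing each one by the gradient component $\partial_{v_{n_i}}\theta[\I_0^{(p_i)}]$ together with the branch-point factor $\big(\prod_{\iota\in\I_0^{(p_i)}}(e_{p_i}-e_\iota)\big)^{1/4}\big(\prod_{j\in\J_0}(e_{p_i}-e_j)\big)^{-1/4}$ and a factor $1/\theta[\I_0]$. Collecting the three copies of $1/\theta[\I_0]$ with the $\theta[\I_0]$ already present on the left produces the prefactor $1/\theta[\I_0]^2$ of \eqref{Th3InvM}, and the three gradient components combine into the product appearing there.

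After this substitution the coefficient of $\partial_{v_{n_1}}\theta[\I_0^{(p_{k_1})}]\partial_{v_{n_2}}\theta[\I_0^{(p_{k_2})}]\partial_{v_{n_3}}\theta[\I_0^{(p_{k_3})}]$ is a pure branch-point expression. The key simplification, exactly as in the second-derivative proof, is that at each chosen index $p=p_{k_i}$ the factor $\big(\prod_{j\in\J_0}(e_p-e_j)\big)^{1/4}$ coming from the outer prefactor cancels against its reciprocal produced by \eqref{Th2Inv}, leaving $\big(\prod_{\kappa\in\K\backslash\{p\}}(e_p-e_\kappa)\big)^{1/4}$; only the complementary indices $q\in\K\backslash\{p_{k_1},p_{k_2},p_{k_3}\}$ retain the full prefactor $\big(\prod_{j\in\J_0}(e_q-e_j)\big)^{1/4}\big(\prod_{\iota\in\I_3}(e_q-e_\iota)\big)^{-1/4}$. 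I would then split into the two cases $\kFr=5$ and $\kFr=6$ and regroup this branch-point expression into ratios of precisely the two shapes handled by FTT Corollaries~\ref{C:eklm} and~\ref{C:eJI}; substituting those corollaries converts every branch-point ratio into the theta-constant quotients displayed in \eqref{R5M} and \eqref{R6M}. This is the direct analogue of the passage through \eqref{thomae3GradK3} (for $\kFr=3$) and through \eqref{thomae3GradK4} together with \eqref{eRels} (for $\kFr=4$), now with one additional chosen index and one or two additional complementary indices.

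Finally I would record the structural properties asserted in the statement. Since the general Thomae sum runs only over triples $p_1,p_2,p_3$ that are all different, no term with two coinciding indices occurs, which forces $R_{k,k,k}=R_{k,k,l}=R_{k,l,k}=R_{l,k,k}=0$; the symmetry $R_{k_1,k_2,k_3}=R_{\sigma(k_1),\sigma(k_2),\sigma(k_3)}$ is manifest from \eqref{R5M}--\eqref{R6M}, since every pair among $\{p_{k_1},p_{k_2},p_{k_3}\}$ and the complementary set $\K\backslash\{p_{k_1},p_{k_2},p_{k_3}\}$ enter symmetrically. Independence of the arbitrary choice of $j_m,j_n\in\J_0$ is inherited from the corresponding independence in FTT Corollaries~\ref{C:eklm} and~\ref{C:eJI}. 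I expect the main obstacle to be purely combinatorial bookkeeping: organizing the branch-point coefficient into exactly the ratios matched by the two corollaries and, above all, tracking the accumulated sign $\varepsilon_{p_{k_1},p_{k_2},p_{k_3}}=\pm1$ through the many fourth-root and square-root extractions, so that it reduces to the claimed $\pm1$ governed by the ascending order of the indices of $\K$.
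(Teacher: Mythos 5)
Your proposal reproduces the paper's own proof step for step: it specializes \eqref{thomaeNR} to $\mFr=3$ (the paper's \eqref{thomae5Inv}), substitutes the inverse second Thomae formula \eqref{Th2Inv} into each of the three inner sums to get the intermediate expression with prefactor $1/\theta[\I_0]^2$ (the paper's \eqref{RthomaeM3}), and then converts the resulting branch-point coefficients into theta-constant ratios via FTT Corollaries~\ref{C:eklm} and~\ref{C:eJI}, treating $\kFr=5$ and $\kFr=6$ separately exactly as in \eqref{D3thetaK5} and \eqref{D3thetaK6}. The cancellation pattern you identify (the chosen indices $p_{k_i}$ lose their $\J_0$-factors while the complementary indices $q$ retain the full prefactor) is precisely the mechanism the paper uses, so the approach is the same and correct.
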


\begin{proof}
With $\mFr=3$ formula \eqref{thomaeNR} gets the form
\begin{multline}\label{thomae5Inv}
  \frac{\partial^3_{v_{n_1},v_{n_2},v_{n_3}} \theta\big[\I_3\big]}{\theta[\I_0]}
  = \epsilon \prod_{\kappa\in \K}\frac{\big(\prod_{j \in \J_0} (e_\kappa - e_j)\big)^{1/4}}
 {\big(\prod_{\iota \in \I_{3}} (e_\kappa - e_{\iota}) \big)^{1/4}} \times \\ \times
 \sum_{\substack{p_1,p_2,p_3 \in \K \\ \text{all different}}}
 \frac{\sum_{j=1}^g(-1)^{j-1} s_{j-1}^{(p_1)} \omega_{jn_1} 
 \sum_{j=1}^g(-1)^{j-1} s_{j-1}^{(p_2)} \omega_{jn_2} }
 {\prod_{\kappa\in \K\backslash\{p_1,p_2,p_3\}} (e_{p_1}-e_\kappa)(e_{p_2}-e_\kappa)}
 \times \\ \times \frac{\sum_{j=1}^g(-1)^{j-1} s_{j-1}^{(p_3)} \omega_{jn_3}}
 {\prod_{\kappa\in \K\backslash\{p_1,p_2,p_3\}}(e_{p_3}-e_\kappa)},
\end{multline}
where $\K$ consists of $5$ or $6$ indices.
After substitution of \eqref{Th2Inv} the right hand side of \eqref{thomae5Inv} transforms into
\begin{multline}\label{RthomaeM3}
 \frac{1}{\theta[\I_0]^3}\sum_{\substack{p_1,p_2,p_3 \in \K \\ \text{all different}}}
 \frac{\epsilon_{p_1,p_2,p_3} \Delta[\{e_{p_1},e_{p_2},e_{p_3}\}]^{1/2}}
 {\big(\prod_{\kappa\in \K\backslash\{p_1,p_2,p_3\}} \prod_{i=1}^3 (e_{p_i}-e_\kappa)\big)^{1/2}} \times \\ \times
 \prod_{\kappa\in \K\backslash\{p_1,p_2,p_3\}} \frac{\big(\prod_{j \in \J_0} (e_\kappa - e_j)\big)^{1/4}}
 {\big(\prod_{\iota \in \I_{3}\cup\{p_1,p_2,p_3\}} (e_\kappa - e_{\iota}) \big)^{1/4}} \times \\ \times
  \partial_{v_{n_1}} \theta[\I_0^{(p_1)}] 
 \partial_{v_{n_2}} \theta[\I_0^{(p_2)}] \partial_{v_{n_3}} \theta[\I_0^{(p_3)}] ,
\end{multline}
where $\epsilon_{p_1,p_2,p_3}^8=1$, and $\Delta[\{e_{p_1},e_{p_2},e_{p_3}\}]$ denotes the Vandermonde determinant
built from elements $\{e_{p_1},e_{p_2},e_{p_3}\}$.

First, let the cardinality of $\K$ be $5$, so $\K=\{\kappa_1,\kappa_2,\kappa_3, \kappa_4,\kappa_5\}$.
For a set of three $\{\kappa_1,\kappa_2,\kappa_3\} \in \K$, applying FTT Corollaries~\ref{C:eklm} and \ref{C:eJI}
with normal ordering,
one finds  the coefficient of $\partial_{v_{n_1}} \theta[\I_0^{(\kappa_1)}] \partial_{v_{n_2}} \theta[\I_0^{(\kappa_2)}]
\partial_{v_{n_3}} \theta[\I_0^{(\kappa_3)}]$:
\begin{multline*}
 \Ord \frac{\big((e_{\kappa_2} - e_{\kappa_1})(e_{\kappa_3} - e_{\kappa_1})
  (e_{\kappa_3} - e_{\kappa_2})(e_{\kappa_5} - e_{\kappa_4})\big)^{1/2}}
  {\big((e_{\kappa_4} - e_{\kappa_2})(e_{\kappa_4} - e_{\kappa_3})
  (e_{\kappa_5} - e_{\kappa_2})(e_{\kappa_5} - e_{\kappa_3})\big)^{1/2}}  \times \\ \times
 \frac{\big(\prod_{j \in \J_0} (e_{\kappa_4} - e_j)\big)^{1/4}}
 {\big((e_{\kappa_4}-e_{\kappa_1})^2 \prod_{\iota \in \I_{0}^{(\kappa_4)}} (e_{\kappa_4} - e_{\iota})\big)^{1/4}} 
 \frac{\big(\prod_{j \in \J_0} (e_{\kappa_5} - e_j)\big)^{1/4}}
 {\big((e_{\kappa_5}-e_{\kappa_1})^2 \prod_{\iota \in \I_{0}^{(\kappa_5)}} (e_{\kappa_5} - e_{\iota})\big)^{1/4}} \\
 = \frac{\theta[\I_0^{(\kappa_1,\kappa_2 \to j_n,j_m)}]\theta[\I_0^{(\kappa_1,\kappa_3 \to j_n,j_m)}]
 \theta[\I_0^{(\kappa_2,\kappa_3 \to j_n,j_m)}] \theta[\I_0^{(\kappa_4,\kappa_5 \to j_n,j_m)}]}
 { \theta[\I_0^{(\kappa_1,\kappa_4 \to j_n,j_m)}] \theta[\I_0^{(\kappa_2,\kappa_4 \to j_n,j_m)}]
 \theta[\I_0^{(\kappa_3,\kappa_4 \to j_n,j_m)}] \theta[\I_0^{(\kappa_1,\kappa_5 \to j_n,j_m)}] } \times \\ \times
 \frac{\theta[\I_0^{(\kappa_4\to j_m)}] \theta[\I_0^{(\kappa_4\to j_n)}]
 \theta[\I_0^{(\kappa_5\to j_m)}] \theta[\I_0^{(\kappa_5\to j_n)}]
 \theta[\J_0^{(j_n,j_m \to \kappa_4)}] \theta[\J_0^{(j_n,j_m \to \kappa_5)}]}
 {\theta[\I_0^{(\kappa_2,\kappa_5 \to j_n,j_m)}]\theta[\I_0^{(\kappa_3,\kappa_5 \to j_n,j_m)}]
 \theta[\J_0^{(j_m)}]^2 \theta[\J_0^{(j_n)}]^2},
\end{multline*}
where relations similar to \eqref{eRels} are used.
Finally, with an arbitrary pair $j_n,j_m \in \J_0$ and elements of $\K$ denoted by
$p_1$, $p_2$, $p_3$, $q_1$, $q_2$ the following holds
\begin{multline}\label{D3thetaK5}
\partial^3_{v_{n_1},v_{n_2},v_{n_3}} \theta [\I_3] = 
 \frac{\epsilon}{\theta[\I_0]^2 \theta[\J_0^{(j_m)}]^2 \theta[\J_0^{(j_n)}]^2} 
 \sum_{\substack{p_1,p_2,p_3 \in \K \\ \text{all different}}}  (-1)^{p_1+p_2+p_3} \times \\ \times
 \theta[\I_0^{(p_1,p_2 \to j_n,j_m)}]\theta[\I_0^{(p_1,p_3 \to j_n,j_m)}]
 \theta[\I_0^{(p_2,p_3 \to j_n,j_m)}] \theta[\I_0^{(q_1,q_2 \to j_n,j_m)}] \times \\ \times
 \bigg(\prod_{q\in \K\backslash\{p_1,p_2,p_3\}} 
 \frac{\theta[\I_0^{(q\to j_m)}] \theta[\I_0^{(q\to j_n)}] \theta[\J_0^{(j_n,j_m \to q)}]}
 {\theta[\I_0^{(p_1,q \to j_n,j_m)}]\theta[\I_0^{(p_2,q \to j_n,j_m)}]\theta[\I_0^{(p_3,q \to j_n,j_m)}]} \bigg)
 \times \\ \times 
 \partial_{v_{n_1}} \theta[\I_0^{(p_1)}] \partial_{v_{n_2}} \theta[\I_0^{(p_2)}]
 \partial_{v_{n_3}} \theta[\I_0^{(p_3)}].
\end{multline}

Next, let  $\K=\{\kappa_1,\kappa_2,\kappa_3,\kappa_4,\kappa_5,\kappa_6\}$.
Then the coefficient of $\partial_{v_{n_1}} \theta[\I_0^{(p_1)}]$
$\partial_{v_{n_2}} \theta[\I_0^{(p_2)}]
\partial_{v_{n_3}} \theta[\I_0^{(p_3)}]$ in \eqref{RthomaeM3} is expressed in terms of theta constants with 
the help of FTT Corollaries~\ref{C:eklm} and \ref{C:eJI} with normal ordering, namely:
\begin{multline*}
 \Ord \frac{ \big((e_{\kappa_2} - e_{\kappa_1})(e_{\kappa_3} - e_{\kappa_1})
  (e_{\kappa_3} - e_{\kappa_2})(e_{\kappa_5} - e_{\kappa_4})\big)^{1/2}}
  { \prod_{i=4}^6 \big((e_{\kappa_i} - e_{\kappa_2})(e_{\kappa_i} - e_{\kappa_3})\big)^{1/2}}  \times \\ \times
 \big((e_{\kappa_6} - e_{\kappa_4})(e_{\kappa_6} - e_{\kappa_5})\big)^{1/2}
  \prod_{i=4}^6 \frac{\big(\prod_{j \in \J_0} (e_{\kappa_i} - e_j)\big)^{1/4}}
 {\big((e_{\kappa_i}-e_{\kappa_1})^2 \prod_{\iota \in \I_{0}^{(\kappa_i)}} (e_{\kappa_i} - e_{\iota})\big)^{1/4}} \\
 = \theta[\I_0^{(\kappa_1,\kappa_2 \to j_n,j_m)}]\theta[\I_0^{(\kappa_1,\kappa_3 \to j_n,j_m)}]
 \theta[\I_0^{(\kappa_2,\kappa_3 \to j_n,j_m)}] \theta[\I_0^{(\kappa_4,\kappa_5 \to j_n,j_m)}]\times \\ \times 
 \theta[\I_0^{(\kappa_4,\kappa_6 \to j_n,j_m)}]\theta[\I_0^{(\kappa_5,\kappa_6 \to j_n,j_m)}]
 \frac{\prod_{i=1}^3\theta[\J_0^{(j_n,j_m \to \kappa_i)}]}{\theta[\J_0^{(j_m)}]^3 \theta[\J_0^{(j_n)}]^3} 
 \times \\ \times
 \prod_{i=4}^6 \frac{\theta[\I_0^{(\kappa_i\to j_m)}] \theta[\I_0^{(\kappa_i\to j_n)}]} 
 {\theta[\I_0^{(\kappa_1,\kappa_i \to j_n,j_m)}]
 \theta[\I_0^{(\kappa_2,\kappa_i \to j_n,j_m)}]\theta[\I_0^{(\kappa_3,\kappa_i \to j_n,j_m)}]}.
\end{multline*}
Finally,
\begin{multline}\label{D3thetaK6}
  \partial^3_{v_{n_1},v_{n_2},v_{n_3}} \theta\big[\I_3\big] = 
  \frac{\epsilon}{\theta[\I_0]^2\theta[\J_0^{(j_m)}]^3 \theta[\J_0^{(j_n)}]^3}  
  \sum_{\substack{p_1,p_2,p_3 \in \K \\ \text{all different}}}  (-1)^{p_1+p_2+p_3} \times \\ \times
 \theta[\I_0^{(p_1,p_2 \to j_n,j_m)}]\theta[\I_0^{(p_1,p_3 \to j_n,j_m)}]
 \theta[\I_0^{(p_2,p_3 \to j_n,j_m)}] \theta[\I_0^{(q_1,q_2 \to j_n,j_m)}] \times \\ \times
 \theta[\I_0^{(q_1,q_3 \to j_n,j_m)}] \theta[\I_0^{(q_2,q_3 \to j_n,j_m)}] 
 \theta[\J_0^{(j_n,j_m \to p_1)}]\theta[\J_0^{(j_n,j_m \to p_2)}]\theta[\J_0^{(j_n,j_m \to p_3)}] \times \\ \times
 \bigg(\prod_{q\in \K\backslash\{p_1,p_2,p_3\}} 
 \frac{\theta[\I_0^{(q\to j_m)}] \theta[\I_0^{(q\to j_n)}]}
 {\theta[\I_0^{(p_1,q \to j_n,j_m)}]\theta[\I_0^{(p_2,q \to j_n,j_m)}]\theta[\I_0^{(p_3,q \to j_n,j_m)}]} \bigg)
 \times \\ \times 
 \partial_{v_{n_1}} \theta[\I_0^{(p_1)}] \partial_{v_{n_2}} \theta[\I_0^{(p_2)}]
 \partial_{v_{n_3}} \theta[\I_0^{(p_3)}],
\end{multline}
where $\{q_1,q_2,q_3\} = \K \backslash \{p_1,p_2,p_3\}$ for each set of three $\{p_1,p_2,p_3\}$. In fact,
the multiplier $\epsilon$ equals $-1$,  and remains the same for all partitions $\I_3 \cup \J_3$ 
in all genera. This accords with the statement of Remark~\ref{R:eOrd} that multiplier $\epsilon$
in the general Thomae formula for third order theta derivatives is alternating as well as $\epsilon$ 
in the second Thomae formula.
\end{proof}

\begin{rem}\label{R:3Form}
Theorem~\ref{T:D3thetaGradRepr} gives a representation of third order theta derivatives 
in terms of theta constants and first order theta derivatives which also can be considered as a \emph{generalization
of Jacobi's derivative formula}. Note that \eqref{Th3InvM} is a symmetric trilinear form
with tensor $\hat{R}$ of order $3$ on the space of vectors $\partial_{v_{n}} \theta[\I_1] $
of the form
\begin{align*}
 &\partial_{v_{n}} \theta[\I_1] = \big(\partial_{v_{n}} \theta[\I_0^{(i_1)}],
 \partial_{v_{n}} \theta[\I_0^{(i_2)}], \partial_{v_{n}} \theta[\I_0^{(i_3)}], 
 \partial_{v_{n}} \theta[\I_0^{(i_4)}], \partial_{v_{n}} \theta[\I_0^{(i_5)}]\big)^t,& 
\intertext{where $\{i_1,i_2,i_3,i_4,i_5\}=\K$ such that $\I_3=\I_0\backslash \K$, or}
 &\partial_{v_{n}} \theta[\I_1] = \big(\partial_{v_{n}} \theta[\I_0^{(i_1)}],
 \partial_{v_{n}} \theta[\I_0^{(i_2)}], \partial_{v_{n}} \theta[\I_0^{(i_3)}], 
 \partial_{v_{n}} \theta[\I_0^{(i_4)}], \\ 
 &\phantom{mmmmmmmmmmmmmmmmmm}\partial_{v_{n}} \theta[\I_0^{(i_5)}], 
 \partial_{v_{n}} \theta[\I_0^{(i_6)}]\big)^t,&
\end{align*}
where $\{i_1,i_2,i_3,i_4,i_5,i_6\}=\K$.
Note that first order theta derivatives $\{\partial_v \theta[\I_0^{(i)}]\}_{i\in\K}$
involved into \eqref{Th3InvM} form a set of linearly independent vectors.
\end{rem}

\begin{rem}
There exist many representations of the same third order theta derivative  produced with
 different choices of $\K$ and  $j_n$, $j_m \in \J_3 \backslash \K$, all of them are equivalent, 
 cf. Remark~\ref{R:multrepr}
\end{rem}

Based onTheorems~\ref{T:D2thetaGradRepr} and \ref{T:D3thetaGradRepr} 
the following generalization to arbitrary multiplicity $\mFr$ arises.
\begin{conj}\label{C:DNthetaGradRepr}
 Let $\I_0=\{i_1$, $\dots$, $i_{g}\}$ and $\J_0 = \{j_1$, $\dots$, $j_{g+1}\}$ form a partition of 
 the set $\{1$, $2$, $\dots$, $2g+1\}$ of indices of finite branch points of a genus $g$ hyperelliptic curve, 
 and $\I_\mFr = \I_0\backslash \K$, where $\K$ is a set of cardinality $\kFr=2\mFr-1$ or $2\mFr$. Then
\begin{gather}\label{ThNInvM}
 \partial^\mFr_{v_{n_1},\dots,v_{n_\mFr}} \theta[\I_\mFr] 
 = \frac{\epsilon}{\theta[\I_0]^{\mFr-1}} \sum_{k_1,\dots,k_\mFr = 1}^\kFr 
  R_{k_1,\dots,k_\mFr} \prod_{i=1}^\mFr \partial_{v_{n_i}} \theta[\I_0^{(p_{k_i})}],
\end{gather}
where $\I_0^{(p)}=\I_0 \backslash \{p\}$, and $R_{k_1,\dots,k_\mFr}$ form a symmetric tensor of order~$\mFr$  
with vanishing diagonal entries, that is entries with two coinciding indices equal zero, 
and off-diagonal entries are defined as follows
with arbitrary $j_n$, $j_m\in\J_0$
\begin{itemize}
 \item when $\kFr=2\mFr-1$, so that $\{q_1,\dots,q_{\mFr-1}\}=\K\backslash\{p_{k_1}$, \ldots, $p_{k_\mFr}\}$
\begin{multline}\label{RNoM} 
 R_{k_1,\dots,k_\mFr} = 
 \frac{(-1)^{p_{k_1}+ \cdots + p_{k_\mFr}}}
 {\big(\theta[\J_0^{(j_m)}]\theta[\J_0^{(j_n)}]\big)^{\kFr-\mFr}} 
 \prod_{l>i=1}^{\mFr} \theta[\I_0^{(p_{k_i},p_{k_l} \to j_n,j_m)}] \times \\ \times 
 \prod_{l>i=1}^{\kFr-\mFr} \theta[\I_0^{(q_i,q_l \to j_n,j_m)}] 
 \prod_{l=1}^{\kFr-\mFr} \frac{\theta[\I_0^{(q_l\to j_m)}] \theta[\I_0^{(q_l\to j_n)}] \theta[\J_0^{(j_n,j_m \to q_l)}]}
 {\prod_{i=1}^\mFr \theta[\I_0^{(p_{k_i},q_l \to j_n,j_m)}]},
\end{multline}
\item when $\kFr=2\mFr$, so that $\{q_1,\dots,q_{\mFr}\}=\K\backslash\{p_{k_1}$, \ldots, $p_{k_\mFr}\}$,
\begin{multline}\label{RNeM} 
 R_{k_1,\dots,k_\mFr} = 
 \frac{(-1)^{p_{k_1}+ \cdots + p_{k_\mFr}}}
 {\big(\theta[\J_0^{(j_m)}]\theta[\J_0^{(j_n)}]\big)^{\kFr-\mFr}} 
  \prod_{l>i=1}^{\mFr} \theta[\I_0^{(p_{k_i},p_{k_l} \to j_n,j_m)}] \times \\ \times 
  \prod_{l>i=1}^{\kFr-\mFr} \theta[\I_0^{(q_i,q_l \to j_n,j_m)}] 
 \prod_{i=1}^\mFr \theta[\J_0^{(j_n,j_m \to p_{k_i})}] 
 \prod_{l=1}^{\kFr-\mFr} \frac{\theta[\I_0^{(q_l\to j_m)}] \theta[\I_0^{(q_l\to j_n)}]}
 {\prod_{i=1}^\mFr \theta[\I_0^{(p_{k_i},q_l \to j_n,j_m)}]} .
\end{multline}
\end{itemize}
The multiplier  $\epsilon$ equals $1$ or $-1$, and this value is the same for all partitions $\I_\mFr \cup \J_\mFr$
with fixed $\mFr$.
\end{conj}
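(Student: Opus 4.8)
The plan is to carry out, for general multiplicity $\mFr$, the same two-stage computation that proves Theorems~\ref{T:D2thetaGradRepr} and~\ref{T:D3thetaGradRepr}. First I would specialise the general Thomae formula \eqref{thomaeNR} to multiplicity $\mFr$, with $\K$ of cardinality $\kFr=2\mFr-1$ or $2\mFr$, and then replace each of the $\mFr$ linear combinations $\sum_{j}(-1)^{j-1}s_{j-1}(\I_0^{(p_i)})\omega_{j n_i}$ occurring in \eqref{thomaeNR} (note $\I_\mFr\cup\K^{(p_i)}=\I_0^{(p_i)}$) by a first derivative theta constant through the inverted form of the second Thomae theorem employed in the proof of Proposition~\ref{P:thomaeK2}. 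Every such replacement contributes a quarter-power of branch-point differences together with one factor $\theta[\I_0]^{-1}$; the $\mFr$ of these, set against the single $\theta[\I_0]$ cleared from the left, produce the denominator $\theta[\I_0]^{\mFr-1}$ in \eqref{ThNInvM}. Writing $P=\{p_1,\dots,p_\mFr\}$ and $Q=\K\backslash P$, the outcome, which generalises \eqref{RthomaeM3}, is
\begin{gather*}
 \partial^\mFr_{v_{n_1},\dots,v_{n_\mFr}} \theta[\I_\mFr]
 = \frac{1}{\theta[\I_0]^{\mFr-1}} \sum_{\substack{p_1,\dots,p_\mFr \in \K \\ \text{all different}}}
 C_{p_1,\dots,p_\mFr}\,\prod_{i=1}^\mFr \partial_{v_{n_i}} \theta[\I_0^{(p_i)}],
\end{gather*}
where the branch-point coefficient, after the cancellations of the $\J_0$- and $\I_\mFr$-parts against those of $P$, is
\begin{gather*}
 C_{p_1,\dots,p_\mFr} = \epsilon_{p_1,\dots,p_\mFr}\,
 \frac{\Delta(P)^{1/2}}{\big(\prod_{q\in Q}\prod_{i=1}^\mFr (e_{p_i}-e_q)\big)^{1/2}}\,
 \prod_{q\in Q} \frac{\big(\prod_{j\in\J_0}(e_q-e_j)\big)^{1/4}}{\big(\prod_{\iota\in\I_\mFr\cup P}(e_q-e_\iota)\big)^{1/4}},
\end{gather*}
with $\Delta(P)$ the Vandermonde determinant in $\{e_p\mid p\in P\}$.

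The core of the proof is the identity $C_{p_1,\dots,p_\mFr}=R_{k_1,\dots,k_\mFr}$, in which $p_i=p_{k_i}$. I would establish it by substituting the first Thomae formula \eqref{thomae1} for every theta constant appearing in the proposed $R$, which turns $R$ into a pure ratio of fourth roots of branch-point differences; it then remains to match exponents with $C$. Two verifications are required. First, the prefactors $(\det\omega/\pi^g)^{1/2}$ must cancel: counting the theta constants in \eqref{RNoM} gives $(\mFr-1)(\mFr+2)$ in both numerator and denominator when $\kFr=2\mFr-1$, and counting those in \eqref{RNeM} gives $\mFr(\mFr+2)$ in both when $\kFr=2\mFr$, so $R$ is indeed a branch-point ratio carrying no power of $\det\omega$. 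Second, for each unordered pair of branch points the exponent of the corresponding difference must agree in $C$ and in $R$; I would check this block by block, splitting the index set into $\I_\mFr$, $P$, $Q$, the distinguished pair $\{j_m,j_n\}$, and $\J_0\backslash\{j_m,j_n\}$. Equivalently, and more in the spirit of the cases already treated, I would factor $C$ into atoms, each an instance of FTT Corollary~\ref{C:eklm} or of FTT Corollary~\ref{C:eJI}, exactly as relations~\eqref{eRels} do for $\mFr=2$: the Vandermonde $\Delta(P)^{1/2}$ together with the cross $P$--$Q$ half-powers reassembles into the pair-thetas $\theta[\I_0^{(p_{k_i},p_{k_l}\to j_n,j_m)}]$, while for each $q\in Q$ the quarter-power block reassembles into $\theta[\I_0^{(q\to j_m)}]\theta[\I_0^{(q\to j_n)}]$, the within-$Q$ pair-thetas, and the $\theta[\J_0^{(\cdots)}]$ factors, with the cross thetas $\theta[\I_0^{(p_{k_i},q\to j_n,j_m)}]$ landing in the denominator.

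The main obstacle is precisely this exponent bookkeeping, made uniform in $\mFr$. The delicate point is that $C$ contains no differences internal to $Q$, whereas $R$ does carry within-$Q$ pair-thetas; these must cancel in $R$ against the $q$-blocks and the $\theta[\J_0^{(\cdots)}]$ factors, and likewise every auxiliary difference introduced by the choice of $j_m,j_n$ must disappear. One checks, for instance, that for a fixed pair $\{q,q'\}\subset Q$ the net exponent of $(e_q-e_{q'})$ in $R$ vanishes identically in $\mFr$. Organising these cancellations for all pair types simultaneously is elementary but intricate, which is why the statement is posed as a conjecture rather than proved here. Two final points complete the scheme: the factor $\epsilon_{p_{k_1},\dots,p_{k_\mFr}}$, a priori an eighth root of unity, is reduced to $\pm1$ and to the stated power of $-1$ by adopting the right ordering of Remark~\ref{R:eOrd} together with the ascending order of $\K$, which fixes all the roots of unity in the Thomae formulas and the FTT corollaries; and the symmetry of the tensor $R_{k_1,\dots,k_\mFr}$, the vanishing of its entries with two coinciding indices, and the independence of the right-hand side from $j_m,j_n$ follow respectively from the sum over distinct $p_i$ and from the higher-multiplicity analogues of Propositions~\ref{P:D2thetaLD3} and~\ref{P:D2thetaLD4}.
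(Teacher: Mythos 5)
The statement you are addressing is left as a \emph{conjecture} in the paper: no proof is given there, only the remark that it should follow by extending Theorems~\ref{T:D2thetaGradRepr} and~\ref{T:D3thetaGradRepr}. Your plan is precisely that extension, and it is the same two-stage computation the paper performs for $\mFr=2$ and $\mFr=3$: specialise the general Thomae formula \eqref{thomaeNR} to multiplicity $\mFr$, trade each factor $\sum_{j}(-1)^{j-1}s_{j-1}(\I_0^{(p_i)})\omega_{jn_i}$ for $\partial_{v_{n_i}}\theta[\I_0^{(p_i)}]/\theta[\I_0]$ by the inverted second Thomae theorem used in the proof of Proposition~\ref{P:thomaeK2}, then rewrite the remaining branch-point coefficient in theta constants via FTT Corollaries~\ref{C:eklm} and~\ref{C:eJI}. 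Your intermediate objects check out: your coefficient $C_{p_1,\dots,p_\mFr}$ reduces exactly to the coefficients appearing in \eqref{thomae3Grad} and \eqref{RthomaeM3} for $\mFr=2,3$; the bookkeeping producing $\theta[\I_0]^{\mFr-1}$ is right; and your counts of theta constants in \eqref{RNoM} and \eqref{RNeM}, namely $(\mFr-1)(\mFr+2)$ and $\mFr(\mFr+2)$ in both numerator and denominator, are correct, so the $(\det\omega/\pi^g)^{1/2}$ prefactors do cancel upon substituting the first Thomae formula.

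What you have not done --- and you say so explicitly --- is the one step that constitutes the actual content of the conjecture: the identity $C_{p_1,\dots,p_\mFr}=R_{k_1,\dots,k_\mFr}$ for general $\mFr$, i.e.\ the verification that the within-$Q$ pair thetas, the $\theta[\J_0^{(\cdots)}]$ factors, and all dependence on the auxiliary choice of $j_m,j_n$ cancel for every type of index pair, generalising the relations \eqref{eRels}. Since the paper does not carry this out either (which is exactly why the statement is a conjecture rather than a theorem), there is no paper proof to compare against; your proposal is a faithful, correctly set-up reduction of the conjecture to a finite exponent-matching problem, but it establishes nothing beyond what the paper already asserts. Nothing in your outline suggests the bookkeeping would fail --- the block decomposition into $\I_\mFr$, $P$, $Q$, $\{j_m,j_n\}$, $\J_0\setminus\{j_m,j_n\}$ is the right organising principle, and the $j_m,j_n$-independence can indeed be expected to follow from the higher-multiplicity analogues of Propositions~\ref{P:D2thetaLD3} and~\ref{P:D2thetaLD4} --- but until those cancellations are written out uniformly in $\mFr$, the statement remains open for you just as it does for the author.
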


\section{Schottky type relation}\label{s:SchottkyRel}
Returning to second order theta derivatives, we find out a relation similar to the Schottky identity. 

\begin{prop}\label{P:RelG}
Let $\I_0=\{i_1,\dots,i_{g-4}$, $p_1$, $p_2$, $p_3$, $p_4\}$, 
$\J_0 = \{j_1,\dots, j_{g+1}\}$, and $\I_0 \cup \J_0$ form a partition of 
the set $\{1$, $2$, $\dots$, $2g+1\}$ of indices of finite branch points of a genus $g\geqslant 4$ hyperelliptic curve. 
With an arbitrary choice of the partition  and an arbitrary pair $j_m$, $j_n\in \J_0$ 
the following relation holds
\begin{multline}\label{SchottkyR}
 \theta[\I_0^{(p_1,p_2\to j_m,j_n)}]^8 \theta[\I_0^{(p_3,p_4 \to j_m,j_n)}]^8 \\
 + \theta[\I_0^{(p_1,p_3\to j_m,j_n)}]^8 \theta[\I_0^{(p_2,p_4 \to j_m,j_n)}]^8 
 + \theta[\I_0^{(p_1,p_4\to j_m,j_n)}]^8 \theta[\I_0^{(p_2,p_3 \to j_m,j_n)}]^8 \\
 - 2\Big(\theta[\I_0^{(p_1,p_2\to j_m,j_n)}]^4 \theta[\I_0^{(p_3,p_4 \to j_m,j_n)}]^4
 \theta[\I_0^{(p_1,p_3\to j_m,j_n)}]^4 \theta[\I_0^{(p_2,p_4 \to j_m,j_n)}]^4 \\
 + \theta[\I_0^{(p_1,p_2\to j_m,j_n)}]^4 \theta[\I_0^{(p_3,p_4 \to j_m,j_n)}]^4
 \theta[\I_0^{(p_1,p_4\to j_m,j_n)}]^4 \theta[\I_0^{(p_2,p_3 \to j_m,j_n)}]^4 \\
 + \theta[\I_0^{(p_1,p_3\to j_m,j_n)}]^4 \theta[\I_0^{(p_2,p_4 \to j_m,j_n)}]^4
 \theta[\I_0^{(p_1,p_4\to j_m,j_n)}]^4 \theta[\I_0^{(p_2,p_3 \to j_m,j_n)}]^4 \Big) = 0.
\end{multline}
\end{prop}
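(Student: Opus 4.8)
The plan is to read \eqref{SchottkyR} as the vanishing of the determinant of the $4\times4$ coefficient matrix $\hat R$ of Theorem~\ref{T:D2thetaGradRepr}, and to deduce that vanishing from the rank bound of Theorem~\ref{T:DetD2theta}. Concretely, I would take $\I_2=\I_0\backslash\{p_1,p_2,p_3,p_4\}$, a multiplicity-$2$ characteristic obtained by dropping $\kFr=4$ indices (this is exactly why $g\geqslant4$ is needed). By Theorem~\ref{T:D2thetaGradRepr} the Hesse matrix factors as in \eqref{Th2InvM}, namely $\partial^2_v\theta[\I_2]=\theta[\I_0]^{-1}\,G^t\hat R\,G$, where $G=\partial_v\theta[\I_1]$ is the $4\times g$ matrix of the four gradients $\partial_v\theta[\I_0^{(p_i)}]$ and $\hat R$ is the hollow symmetric $4\times4$ matrix \eqref{RK4}. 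By Remark~\ref{R:2Form} (together with Proposition~\ref{P:D1thetaLD4}) these four gradients are linearly independent, so $G$ has full row rank $4$ for $g\geqslant4$; hence $\rank(G^t\hat R\,G)=\rank\hat R$. Since Theorem~\ref{T:DetD2theta} gives $\rank\partial^2_v\theta[\I_2]=3$, it follows that $\rank\hat R=3$ and therefore $\det\hat R=0$.

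Next I would expand $\det\hat R$ for the hollow symmetric matrix \eqref{RK4}. Writing $a=R_{1,2}$, $b=R_{1,3}$, $c=R_{1,4}$, $d=R_{2,3}$, $e=R_{2,4}$, $f=R_{3,4}$, a direct cofactor expansion gives
\[
\det\hat R=(af)^2+(be)^2+(cd)^2-2\big(af\,be+af\,cd+be\,cd\big),
\]
so that the three products of complementary pairs $R_{1,2}R_{3,4}$, $R_{1,3}R_{2,4}$, $R_{1,4}R_{2,3}$ play exactly the roles of the three theta monomials in \eqref{SchottkyR}. This is the ``Schottky shape'', and it remains to insert the explicit entries \eqref{R4M} and simplify each complementary product.

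The computation I expect to carry out is as follows. Substituting \eqref{R4M} with $\{q_1,q_2\}=\K\backslash\{p_k,p_l\}$, every complementary product $R_{p_a,p_b}R_{p_c,p_d}$ factors as one fixed quantity
\[
\mathcal K=\frac{\prod_{i=1}^4\theta[\J_0^{(j_n,j_m\to p_i)}]\prod_{i=1}^4\theta[\I_0^{(p_i\to j_m)}]\theta[\I_0^{(p_i\to j_n)}]}{\big(\theta[\J_0^{(j_m)}]\theta[\J_0^{(j_n)}]\big)^4},
\]
which is symmetric in $p_1,\dots,p_4$ and hence common to all three products, times $\varepsilon_{p_a,p_b}\varepsilon_{p_c,p_d}\theta[\I_0^{(p_a,p_b\to j_n,j_m)}]^2\theta[\I_0^{(p_c,p_d\to j_n,j_m)}]^2$ divided by the squares of the four remaining theta monomials. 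Writing $t_{ab}=\theta[\I_0^{(p_a,p_b\to j_n,j_m)}]^2$ and $P=t_{12}t_{13}t_{14}t_{23}t_{24}t_{34}$, this collapses to $R_{p_a,p_b}R_{p_c,p_d}=\varepsilon_{p_a,p_b}\varepsilon_{p_c,p_d}\,\mathcal K\,P^{-1}(t_{ab}t_{cd})^2$. Consequently $\det\hat R=\mathcal K^2P^{-2}$ times the bracket appearing in \eqref{SchottkyR}, once one checks that the sign products entering the three cross terms are all $+1$. As $\mathcal K$ and $P$ are nonzero products of non-singular even theta constants (First Thomae), the identity $\det\hat R=0$ forces the bracket to vanish, which is precisely \eqref{SchottkyR}.

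The main obstacle I anticipate is the sign bookkeeping. Under the ordering convention $\varepsilon_{p_k,p_l}=(-1)^{k+l}$ of Theorem~\ref{T:D2thetaGradRepr} one must verify that the three products $\varepsilon_{1,2}\varepsilon_{3,4}\varepsilon_{1,3}\varepsilon_{2,4}$, $\varepsilon_{1,2}\varepsilon_{3,4}\varepsilon_{1,4}\varepsilon_{2,3}$, and $\varepsilon_{1,3}\varepsilon_{2,4}\varepsilon_{1,4}\varepsilon_{2,3}$ each equal $+1$, so that the cross terms acquire the coefficient $-2$ exactly as written; a short direct check yields $+1$ in all three cases. The secondary routine point is to confirm that the factor $\mathcal K$ is genuinely identical across the three complementary products and that the leftover denominators assemble into $P$ as claimed — this is the bulk of the elementary algebra, and it is what converts the abstract statement $\det\hat R=0$ into the explicit relation \eqref{SchottkyR}.
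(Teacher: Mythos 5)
Your reduction of \eqref{SchottkyR} to $\det\hat R=0$ is sound and is also how the paper begins: the cofactor expansion of the hollow symmetric matrix \eqref{RK4}, the observation that the three complementary products $R_{1,2}R_{3,4}$, $R_{1,3}R_{2,4}$, $R_{1,4}R_{2,3}$ share one symmetric nonzero factor built from non-singular even theta constants, and the sign check $\varepsilon_{1,2}\varepsilon_{3,4}=\varepsilon_{1,3}\varepsilon_{2,4}=\varepsilon_{1,4}\varepsilon_{2,3}=+1$ are all correct. The genuine gap is in your proof that $\det\hat R=0$. It hinges on the four gradients $\partial_v\theta[\I_0^{(p_1)}],\dots,\partial_v\theta[\I_0^{(p_4)}]$ being linearly independent, i.e.\ on $G=\partial_v\theta[\I_1]$ having row rank $4$; if $\rank G$ were $3$, then $\rank(G^t\hat R\,G)\leqslant 3$ holds for \emph{any} $\hat R$, and Theorem~\ref{T:DetD2theta} would tell you nothing about $\det\hat R$. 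That independence is nowhere proved in the paper: Proposition~\ref{P:D1thetaLD4} only treats triples $\partial_v\theta[\I\cup\{\kappa_i,\kappa_j\}]$ whose index sets share a common part of cardinality $g-3$, while your four sets $\I_0^{(p_i)}=\I_2\cup(\K\backslash\{p_i\})$ share only $\I_2$, of cardinality $g-4$; independence of such a quadruple is precisely the case $\rFr=4$ of Conjecture~\ref{C:D1thetaLDH}, and Remark~\ref{R:2Form} merely asserts it without proof. So, as written, your argument rests on a statement the paper itself leaves conjectural.

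The missing ingredient is true and can be supplied from second Thomae formula \eqref{thomae2MF}: each $\partial_v\theta[\I_0^{(p_i)}]$ is a nonzero multiple of $\omega^t$ applied to the coefficient vector of the polynomial $\prod_{\iota\in\I_2}(x-e_\iota)\prod_{j\neq i}(x-e_{p_j})$, and a vanishing linear combination of these four polynomials, evaluated at $x=e_{p_k}$ for $k=1,\dots,4$, forces every coefficient to vanish; since $\omega$ is invertible this gives $\rank G=4$. With that lemma added your route works, but note that it is genuinely different from the paper's and in fact reverses its logic: the paper proves $\det\hat R=0$ directly by substituting the branch-point form \eqref{thomae3GradK4} of the entries, which collapses the determinant to $a_1^4+a_2^4+a_3^4-2\big(a_1^2a_2^2+a_1^2a_3^2+a_2^2a_3^2\big)$ with $a_1=(e_{p_2}-e_{p_1})(e_{p_4}-e_{p_3})$, $a_2=(e_{p_3}-e_{p_1})(e_{p_4}-e_{p_2})$, $a_3=(e_{p_4}-e_{p_1})(e_{p_3}-e_{p_2})$, and this vanishes because $a_1-a_2+a_3=0$. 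That computation needs neither Theorem~\ref{T:DetD2theta} nor any rank statement about gradients; the paper then uses $\rank\hat R=3$ (non-vanishing of the $3\times3$ minors) together with Cauchy--Binet to \emph{recover} Theorem~\ref{T:DetD2theta}, rather than assuming it as input.
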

\begin{proof}
This follows immediately from the fact that
\begin{gather}\label{R4vanish}
 \det \hat{R} = 0,
\end{gather}
where $\hat{R}$ is $4\times 4$ matrix with entries defined 
by \eqref{R4M} with the set $\K = \{p_1$, $p_2$, $p_3$, $p_4\}$ of dropped indices. 
To prove \eqref{R4vanish} we recall another representation of the entries $R_{k,l}$ of matrix $\hat{R}$,
namely the representation
in terms of branch points, which is given by \eqref{thomae3GradK4} 
where $\{\kappa_1,\kappa_2\}$ is the pair of indices $\{p_k,p_l\}$ 
from $\{p_1,p_2,p_3,p_4\}$, and $\{\kappa_3,\kappa_4\} = \K\backslash \{p_k,p_l\}$.
We suppose that $p_1<p_2<p_3<p_4$, and indices of branch points in factors $(e_l-e_k)$ 
are ordered normally. 

Substituting the mentioned representation \eqref{thomae3GradK4} for $R_{k,l}$  into
\begin{multline*}
 \det \hat{R} = R_{1,2}^2 R_{3,4}^2 + R_{1,3}^2 R_{2,4}^2 + R_{1,4}^2 R_{2,3}^2 \\ 
 - 2 \big(R_{1,2} R_{3,4} R_{1,3} R_{2,4}
 + R_{1,2} R_{3,4} R_{1,4} R_{2,3} + R_{1,3} R_{2,4} R_{1,4} R_{3,4}\big)
\end{multline*}
 one obtains
\begin{gather}\label{Ra}
c  \det \hat{R} = a_1^4 + a_2^4 + a_3^4
 - 2 \big(a_1^2 a_2^2 + a_1^2 a_3^2 + a_2^2 a_3^2 \big)
\end{gather}
with a constant multiple $c$ and
\begin{gather}\label{aDefs}
 \begin{split}
 &a_1 = (e_{p_2}-e_{p_1})(e_{p_4}-e_{p_3}),\\
 &a_2 = (e_{p_3}-e_{p_1})(e_{p_4}-e_{p_2}),\\
 &a_3 = (e_{p_4}-e_{p_1})(e_{p_3}-e_{p_2}).
 \end{split}
\end{gather}
The right hand side of \eqref{Ra} equals
\begin{equation*}
(a_1+a_2+a_3)(a_1+a_2-a_3)(a_1-a_2+a_3)(a_1-a_2-a_3)=0,
\end{equation*}
due to the identity
\begin{gather*}
 a_1-a_2+a_3 = 0.
\end{gather*}
\end{proof}

\begin{rem}
By straightforward computation one can check that all $3\times 3$ minors of $\hat{R}$ do not vanish.
Therefore, in the case of four dropped indices matrix $\hat{R}$ in representation \eqref{TD2ExprM} 
of $\partial_v^2 \theta[\I_2]$ has rank $3$. By the Cauchy-Binet formula this implies that 
the Hesse matrix $\partial_v^2 \theta[\I_2]$ also has rank $3$, that 
coincides with the statement of Theorem~\ref{T:DetD2theta}. 
\end{rem}

The reader can notice that \eqref{SchottkyR} has the form of the \emph{Schottky invariant}~$J$, 
see \cite[p.\,341--342]{Sch1888}. According to \cite{Sch1888}, 
\emph{in the abelian group of all half-period characteristics every syzygetic group of rank $3$} (that is of order~$2^3$) 
\emph{possesses three constants $\{r_1,r_2,r_3\}$ 
such that 
\begin{gather}\label{SchottkyEqlt}
 \sqrt{r_1} \pm \sqrt{r_2} \pm \sqrt{r_3} = 0
\end{gather}
with a particular choice of signs,
and the constant
\begin{gather}\label{SchottkyInv}
 J = r_1^2 + r_2^2 + r_3^2 - 2 r_1 r_2 - 2r_1 r_3 - 2 r_2 r_3
\end{gather}
is independent of the way of grouping characteristics.}

Recall that two characteristics $[\alpha]$ and $[\beta]$ are called syzygetic or azygetic according
as $(\alpha^t \beta' - \beta^t \alpha') \modR 2 =0$ or $1$. Characteristics in a set are called 
syzygetic (azygetic) if the characteristics are pairwise syzygetic (azygetic).
After \cite[ch.\,XVII]{bak897} we denote a syzygetic group (called G\"{o}pel group) by $(P)$.
With the help of this group and another three characteristics $A_1$, $A_2$, $A_3$, 
one produces three coset spaces (G\"{o}pel systems)
$A_i+(P) = (A_i P)$, $i=1,2,3$.
Each coset has the property that
every three of its characteristics are syzygetic \cite[ch.\,XVII p.\,490--491]{bak897}. 
Then three constants in the Schottky invariant are defined as follows, see \cite[Eq.\,(3) p.\,340]{Sch1888},
\begin{gather}\label{rSchottkyDef}
 r_i = \prod_{A\in (A_i P)} \theta[A].
\end{gather}

In \cite[Lemma 3 p.\,538]{Ig1982} Igusa gives a more accurate statement: \emph{We choose an 
even azygetic triplet $\{A_1$, $A_2$, $A_3\}$ in the space of all characteristics, a group $(P)$ of
rank $3$ such that elements of $(A_1 P)$, $(A_2 P)$, $(A_3 P)$  are all even and put \eqref{rSchottkyDef}
for $1\leqslant i \leqslant 3$; then $J$ as in \eqref{SchottkyInv} depends neither on the triplet $\{A_1$, $A_2$, $A_3\}$
nor on $(P)$.} In fact, group $(P)$ is not required to be syzygetic, 
but all characteristics of $(A_1 P)$, $(A_2 P)$, $(A_3 P)$ 
should be even. However, in the hyperelliptic case these characteristics should be even non-singular,
otherwise the products vanish, and relation \eqref{SchottkyEqlt} becomes trivial.

Relation \eqref{SchottkyR} has the form of the Schottky invariant $J$ with $\{r_i\}$ 
produced by a group $(P)$ of rank $1$ (and order $2$) 
consisting of zero characteristic $[\varepsilon_{0}]$, 
and characteristic $[\mathcal{A}(\K)]$ of point $\mathcal{A}(\K) \in \Jac$
with $\K = \{p_1$, $p_2$, $p_3$, $p_4\} \subset \I_0$. 
Three characteristics which give rise to the coset spaces are the following
\begin{gather}\label{Achars}
\begin{split}
 &A_1 = [\I_0^{(p_1,p_2 \to j_m, j_n)}],\\
 &A_2 = [\I_0^{(p_1,p_3 \to j_m, j_n)}],\\
 &A_3 = [\I_0^{(p_1,p_4 \to j_m, j_n)}]
\end{split}
\end{gather}
with the arbitrary choice of $j_m$,  $j_n \in \J_0$.
Since constants $r_i$ have degree $8$, the
products $\prod_{A\in (A_i P)} \theta[A]$  are taken to the $4$-th power.
Relation \eqref{SchottkyR} holds in any genus, and proves that 
the Schottky invariant $J$ produced by a subgroup $(P)$ of rank $1$ vanishes in the hyperelliptic case.

\begin{rem}\label{r:rSchottky}
From \eqref{aDefs} with the help of FTT Corollary~\ref{C:eklm}
one can produce products of $2^3$ theta functions as in \eqref{rSchottkyDef}, and
construct the true Schottky invariant. Let a group $(P)$ be generated by
three characteristics, one of which is necessarily
$[\mathcal{A}(\K)]$. The two others can be taken in the form $[\mathcal{A}(\{q_1,j_m\})]$ and 
$[\mathcal{A}(\{q_2,j_n\})]$, where $q_1,q_2 \in \J_0\backslash \{j_m,j_n\}$, $q_1\neq q_2$. 
One or two of the elements $[\mathcal{A}(\{q_1,j_m\})]$
and $[\mathcal{A}(\{q_2,j_n\})]$ can be replaced by 
$[\mathcal{A}(\{\iota_1,q_1\})]$ and $[\mathcal{A}(\{\iota_2,q_2\})]$ with  
$\iota_1, \iota_2 \in\I_0\backslash \K$, $\iota_1\neq \iota_2$ 
and $q_1,q_2 \in \J_0\backslash \{j_m,j_n\}$, $q_1\neq q_2$. 
This choice of generators of $(P)$ guarantees that
all elements of $(A_1P)$, $(A_2P)$, $(A_3P)$ with $A_1$, $A_2$, $A_3$ defined by \eqref{Achars} are even non-singular.
The idea of extension of group $(P)$ comes from the fact that
the ratio of differences of branch points on the left hand side of \eqref{eklm} depends only on three indices
and can be obtained from  different ratios of theta constants.

Then constants $\{r_1,r_2,r_3\}$ are constructed by \eqref{rSchottkyDef}.
With $(P)$ generated by $[\mathcal{A}(\{p_1,p_2,p_3,p_4\})]$, $[\mathcal{A}(\{q_1,j_m\})]$, 
$[\mathcal{A}(\{q_2,j_n\})]$, and $A_1 = [\I_0^{(p_1,p_2 \to j_m, j_n)}]$, where  $\I_0 \supset \K$, 
the following product over characteristics from $(A_1 P)$ is obtained
\begin{multline}\label{TrueSchottky}
 r_1 = \theta[\I_0^{(p_1,p_2\to j_m,j_n)}] \theta[\I_0^{(p_3,p_4\to j_m,j_n)}]
 \theta[\I_0^{(p_1,p_2\to q_1,j_n)}] \theta[\I_0^{(p_3,p_4\to q_1,j_n)}]\times \\ \times
 \theta[\I_0^{(p_1,p_2\to j_m,q_2)}] \theta[\I_0^{(p_3,p_4\to j_m,q_2)}] 
 \theta[\I_0^{(p_1,p_2\to q_1,q_2)}] \theta[\I_0^{(p_3,p_4\to q_1,q_2)}].
\end{multline}
Similar products for $r_2$ and $r_3$ are taken over characteristics from $(A_2 P)$ and $(A_3 P)$,
where $A_2$ and $A_3$  are given by \eqref{Achars}.  
It is straightforward to verify that 
\begin{gather*}
 \sqrt{r_1} - \sqrt{r_2} + \sqrt{r_3} = 0
\end{gather*}
with $p_1<p_2<p_3<p_4$, due to $\sqrt{r_i}=a_i$, where $\{a_1,a_2,a_3\}$ are defined by \eqref{aDefs}.
Thus, $J=0$.

This way of obtaining the Schottky relation is applicable to arbitrary genus in the hyperelliptic case.
A comparison with examples of the Schottky relations from \cite{FR1969Schtt} and \cite{FR1970Schtt} 
are given in Appendix~\ref{A:ShottkyComp}.
At the same time, the present result differs from the proposed in \cite{FR1970Schtt}  
generalisation on the base
of equality of ratios of the Schottky and the Riemann theta constants (in genera $g-1$ and $g$),
and provides another way of generalisations of the Schottky relation, see Appendices~\ref{A:SchottkyRel5} 
and \ref{A:SchottkyRank4}.
\end{rem}

\begin{rem}
In \cite[Lemma 3 p.\,538]{Ig1982} the three characteristics $A_1$, $A_2$, $A_3$
are specified to be an azygetic triplet.
So we can deduce that $[\I_0^{(p_1,p_2 \to j_m, j_n)}]$, $[\I_0^{(p_1,p_3 \to j_m, j_n)}]$, 
$[\I_0^{(p_1,p_4 \to j_m, j_n)}]$, as defined by \eqref{Achars},
 form an azygetic triplet.
Note that these characteristics are even non-singular by the construction.
\end{rem}

\section{Conclusion and discussion}
The relations obtained in the present paper shed light on the heap of theta derivative identities.
From Subsection~\ref{ss:FDT} one can see that among all first order theta derivatives 
there exists a minimal set of gradient vectors serving as a basis.
As shown in Subsetions~\ref{ss:2derThetaC} and \ref{ss:3derThetaC}
all higher order theta derivatives are expressed in terms of
 first order theta derivatives and theta constants.

The results are summarised as follows. Let $\I_0\cup \J_0$ 
with $\I_0=\{i_1$, $\dots$, $i_g\}$ and $\J_0=\{j_1$, $\dots$, $j_{g+1}\}$ be a partition 
of the set $\{1$, $2$, $\dots$, $2g+1\}$  of indices of finite branch points of a
genus $g$ hyperelliptic curve. Among all gradient vectors $\partial_v \theta$ of null values of 
theta functions with half-period characteristics a basis of $g$ linear independent vectors can be chosen.
For example, vectors $\{\partial_v \theta [\I_0^{(\kappa_i)}] \mid \kappa_i\in \I_0,\, i=1,\dots,g \}$, 
where $\I_0^{(\kappa)} = \I_0 \backslash \{\kappa\}$, form such a basis.
All other gradients $\partial_v \theta[\I_1]$ with characteristics of multiplicity $1$ 
are expressed as linear combinations of the basis vectors, and coefficients
have the form of ratios of theta constants (see Propositions~\ref{P:thomaeK2}, \ref{P:D1thetaLD3}', 
\ref{P:D1thetaLD4} and Conjecture~\ref{C:D1thetaLDH}).  Also the question about the rank of 
a collection of such vectors is elucidated (Theorem~\ref{T:D1thetaLD}).

Next, second order theta derivatives are expressed as
symmetric bilinear forms on the vector space of gradients (Theorem~\ref{T:D2thetaGradRepr}).
Third order theta derivatives are expressed as
symmetric trilinear forms on the vector space of gradients (Theorem~\ref{T:D3thetaGradRepr}).
Conjecture~\ref{C:DNthetaGradRepr} extends this result to 
 higher order theta derivatives.
All these expressions are called here  \emph{generalizations of Jacobi's derivative formula}.

However, Jacobi's derivative formula \eqref{JDF} does not appear 
as a particular case in this stream of relations. Jacobi's derivative formula arises as
genus $1$ case of the Riemann-Jacobi derivative formula. And the latter follows from the second Thomae formula. 
On the other hand,  new relations are derived from the general Thomae formula for
higher order theta derivatives, which express higher order theta derivatives through 
first order theta derivatives and theta constants. So the obtained relation allow to decrease 
the order of theta derivatives similar to
Jacobi's derivative formula.

The proposed relations serve as  the basic material to produce further relations,
known and unknown. This is an objective for a future research. As an example, the Schottky relation
in the hyperelliptic case is derived (Proposition~\ref{P:RelG} and Remark~\ref{r:rSchottky}). 
It is obtained in genus $4$ and generalised to an arbitrary genus higher than~$4$.
This result is in a good correspondence with the examples given in \cite{FR1969Schtt} and \cite{FR1970Schtt}.
Equalities of ratios of the Schottky and the Riemann theta constants are confirmed in genera $3$-$4$ and $4$-$5$,
see Apendix~\ref{A:Schottky}.
In addition, a way of constructing the related Schottky and Riemann theta constants 
(in genera $g-1$ and $g$ respectively) is clarified and explained in terms of partitions corresponding to characteristics.

\appendix
\section{First order theta derivative relations}\label{A:G2ThetaRelK2}
\subsection{Genus $2$}
Below a list of relations between vectors of first order theta derivatives in genus $2$
is presented
(for brevity the notation $\theta^{\{\kappa_1,\kappa_2\}}$ 
for $\theta[\{\kappa_1,\kappa_2\}](0;\tau)$ is used)
\begin{align*}
 \partial_{v} \theta^{\emptyset} 
 &= \frac{\theta^{\{2,3\}}\theta^{\{2,4\}}\theta^{\{2,5\}}}
 {\theta^{\{3,4\}}\theta^{\{3,5\}}\theta^{\{4,5\}}} \partial_{v} \theta^{\{1\}} 
 - \frac{\theta^{\{1,3\}}\theta^{\{1,4\}}\theta^{\{1,5\}}}
 {\theta^{\{3,4\}}\theta^{\{3,5\}}\theta^{\{4,5\}}} \partial_{v} \theta^{\{2\}} \\
 &= \frac{\theta^{\{1,3\}}\theta^{\{3,4\}}\theta^{\{3,5\}}}
 {\theta^{\{1,4\}}\theta^{\{1,5\}}\theta^{\{4,5\}}} \partial_{v} \theta^{\{2\}} 
 - \frac{\theta^{\{1,2\}}\theta^{\{2,4\}}\theta^{\{2,5\}}}
 {\theta^{\{1,4\}}\theta^{\{1,5\}}\theta^{\{4,5\}}} \partial_{v} \theta^{\{3\}} \\
  &= \frac{\theta^{\{1,4\}}\theta^{\{2,4\}}\theta^{\{4,5\}}}
 {\theta^{\{1,2\}}\theta^{\{1,5\}}\theta^{\{2,5\}}} \partial_{v} \theta^{\{3\}} 
 - \frac{\theta^{\{1,3\}}\theta^{\{2,3\}}\theta^{\{3,5\}}}
 {\theta^{\{1,2\}}\theta^{\{1,5\}}\theta^{\{2,5\}}} \partial_{v} \theta^{\{4\}} \\
 &= \frac{\theta^{\{1,5\}}\theta^{\{2,5\}}\theta^{\{3,5\}}}
 {\theta^{\{1,2\}}\theta^{\{1,3\}}\theta^{\{2,3\}}} \partial_{v} \theta^{\{4\}} 
 - \frac{\theta^{\{1,4\}}\theta^{\{2,4\}}\theta^{\{3,4\}}}
 {\theta^{\{1,2\}}\theta^{\{1,3\}}\theta^{\{2,3\}}} \partial_{v} \theta^{\{5\}},
 \end{align*}
 and the same with the standard representation of characteristics
\begin{align*}
  \partial_{v} \theta[{}^{11}_{01}] 
 &= \big(\theta[{}^{11}_{00}]\theta[{}^{10}_{00}]\theta[{}^{10}_{01}]\big)^{-1} \Big(
  \theta[{}^{00}_{01}]\theta[{}^{00}_{00}]\theta[{}^{01}_{00}]\partial_{v} \theta[{}^{01}_{01}] - 
  \theta[{}^{00}_{11}]\theta[{}^{00}_{10}]\theta[{}^{01}_{10}]\partial_{v} \theta[{}^{01}_{11}] \Big) \\
 &= \big(\theta[{}^{00}_{10}]\theta[{}^{01}_{10}]\theta[{}^{10}_{01}]\big)^{-1} \Big(
  \theta[{}^{00}_{11}]\theta[{}^{11}_{00}]\theta[{}^{10}_{00}]\partial_{v} \theta[{}^{01}_{11}] - 
  \theta[{}^{11}_{11}]\theta[{}^{00}_{00}]\theta[{}^{01}_{00}]\partial_{v} \theta[{}^{10}_{11}] \Big) \\
 &= \big(\theta[{}^{11}_{11}]\theta[{}^{01}_{10}]\theta[{}^{01}_{00}]\big)^{-1} \Big(
  \theta[{}^{00}_{10}]\theta[{}^{00}_{00}]\theta[{}^{10}_{01}]\partial_{v} \theta[{}^{10}_{11}] - 
  \theta[{}^{00}_{11}]\theta[{}^{00}_{01}]\theta[{}^{10}_{00}]\partial_{v} \theta[{}^{10}_{10}] \Big) \\
 &= \big(\theta[{}^{11}_{11}]\theta[{}^{00}_{11}]\theta[{}^{00}_{01}]\big)^{-1} \Big(
  \theta[{}^{01}_{10}]\theta[{}^{01}_{00}]\theta[{}^{10}_{00}]\partial_{v} \theta[{}^{10}_{10}] - 
  \theta[{}^{00}_{10}]\theta[{}^{00}_{00}]\theta[{}^{11}_{00}]\partial_{v} \theta[{}^{11}_{10}] \Big).
  \end{align*}
Note that the matrices of characteristics are constructed from the homology basis 
given in Subsection \ref{ss:CharHyper}.
Any two vectors of first order theta derivatives could serve as a basis, and all other 
vectors are expressed in terms of these two.

\subsection{Genus $3$}
A list of relations between $\partial_{v} \theta^{\{1\}}$ and other 
vectors of first order theta derivatives in genus $3$ is presented below
\begin{align*}
 \partial_{v} \theta^{\{1\}} 
 &= \frac{\theta^{\{1,3,4\}}\theta^{\{1,3,5\}}\theta^{\{3,6,7\}}}
 {\theta^{\{1,4,5\}}\theta^{\{5,6,7\}}\theta^{\{4,6,7\}}}  \partial_{v} \theta^{\{1,2\}} 
 - \frac{\theta^{\{1,2,4\}}\theta^{\{1,2,5\}}\theta^{\{2,6,7\}}}
 {\theta^{\{1,4,5\}}\theta^{\{5,6,7\}}\theta^{\{4,6,7\}}}  \partial_{v} \theta^{\{1,3\}} \\
 &= \frac{\theta^{\{1,4,2\}}\theta^{\{1,4,5\}}\theta^{\{4,6,7\}}}
 {\theta^{\{1,2,5\}}\theta^{\{2,6,7\}}\theta^{\{5,6,7\}}} \partial_{v} \theta^{\{1,3\}} 
 - \frac{\theta^{\{1,3,2\}}\theta^{\{1,3,5\}}\theta^{\{3,6,7\}}}
 {\theta^{\{1,2,5\}}\theta^{\{2,6,7\}}\theta^{\{5,6,7\}}} \partial_{v} \theta^{\{1,4\}} \\
 &= \frac{\theta^{\{1,5,2\}}\theta^{\{1,5,3\}}\theta^{\{5,6,7\}}}
 {\theta^{\{1,2,3\}}\theta^{\{2,6,7\}}\theta^{\{3,6,7\}}} \partial_{v} \theta^{\{1,4\}} 
 - \frac{\theta^{\{1,4,2\}}\theta^{\{1,4,3\}}\theta^{\{4,6,7\}}}
 {\theta^{\{1,2,3\}}\theta^{\{2,6,7\}}\theta^{\{3,6,7\}}} \partial_{v} \theta^{\{1,5\}} \\
 &= \frac{\theta^{\{1,6,2\}}\theta^{\{1,6,3\}}\theta^{\{6,4,7\}}}
 {\theta^{\{1,2,3\}}\theta^{\{2,4,7\}}\theta^{\{3,4,7\}}} \partial_{v} \theta^{\{1,5\}} 
 - \frac{\theta^{\{1,5,2\}}\theta^{\{1,5,3\}}\theta^{\{5,4,7\}}}
 {\theta^{\{1,2,3\}}\theta^{\{2,4,7\}}\theta^{\{3,4,7\}}} \partial_{v} \theta^{\{1,6\}} \\
 &= \frac{\theta^{\{1,7,2\}}\theta^{\{1,7,3\}}\theta^{\{7,4,5\}}}
 {\theta^{\{1,2,3\}}\theta^{\{2,4,5\}}\theta^{\{3,4,5\}}} \partial_{v} \theta^{\{1,6\}} 
 - \frac{\theta^{\{1,6,2\}}\theta^{\{1,6,3\}}\theta^{\{6,4,5\}}}
 {\theta^{\{1,2,3\}}\theta^{\{2,4,5\}}\theta^{\{3,4,5\}}} \partial_{v} \theta^{\{1,7\}}, 
\end{align*}
and the same with the standard representation of characteristics
 \begin{align*}
 &\partial_{v} \theta[{}^{011}_{101}] \\
 &= \big(\theta[{}^{000}_{101}]\theta[{}^{111}_{011}]\theta[{}^{100}_{011}]\big)^{-1}
 \Big(\theta[{}^{011}_{111}]\theta[{}^{000}_{111}]\theta[{}^{100}_{001}]
 \partial_{v} \theta[{}^{111}_{001}] 
 - \theta[{}^{101}_{111}]\theta[{}^{110}_{111}]\theta[{}^{010}_{001}]
 \partial_{v} \theta[{}^{001}_{001}] \Big)\\
 &= \big(\theta[{}^{000}_{110}]\theta[{}^{111}_{000}]\theta[{}^{100}_{011}]\big)^{-1} 
 \Big(\theta[{}^{101}_{111}]\theta[{}^{000}_{101}]\theta[{}^{100}_{011}]
 \partial_{v} \theta[{}^{001}_{001}] 
 - \theta[{}^{101}_{101}]\theta[{}^{110}_{110}]\theta[{}^{001}_{000}]
 \partial_{v} \theta[{}^{001}_{011}] \Big)\\
 &= \big(\theta[{}^{101}_{101}]\theta[{}^{010}_{001}]\theta[{}^{100}_{001}]\big)^{-1} 
 \Big(\theta[{}^{110}_{111}]\theta[{}^{000}_{111}]\theta[{}^{111}_{011}]
 \partial_{v} \theta[{}^{001}_{011}] 
 - \theta[{}^{101}_{111}]\theta[{}^{011}_{111}]\theta[{}^{100}_{011}]
 \partial_{v} \theta[{}^{010}_{011}] \Big)\\
 &= \big(\theta[{}^{101}_{101}]\theta[{}^{001}_{000}]\theta[{}^{111}_{000}]\big)^{-1} 
 \Big(\theta[{}^{110}_{110}]\theta[{}^{000}_{110}]\theta[{}^{100}_{011}]
 \partial_{v} \theta[{}^{010}_{011}] 
 - \theta[{}^{110}_{111}]\theta[{}^{000}_{111}]\theta[{}^{100}_{010}]
 \partial_{v} \theta[{}^{010}_{010}] \Big)\\
 &= \big(\theta[{}^{110}_{111}]\theta[{}^{010}_{001}]\theta[{}^{111}_{011}]\big)^{-1} 
 \Big(\theta[{}^{111}_{110}]\theta[{}^{001}_{110}]\theta[{}^{100}_{010}]
 \partial_{v} \theta[{}^{010}_{010}] 
 - \theta[{}^{110}_{110}]\theta[{}^{000}_{110}]\theta[{}^{101}_{010}]
 \partial_{v} \theta[{}^{011}_{010}] \Big).
\end{align*}

\section{Proof of Proposition~\ref{P:D1thetaLD3}}\label{A:D1thetaLD3}
\begin{proof}
Applying \eqref{ThGInf} to decompositions of $\partial_v \theta[\I]$ into three pairs:
1) $\partial_{v} \theta[\I\cup \{\kappa_1\}]$ and $\partial_{v} \theta[\I\cup \{\kappa_2\}]$, 
2) $\partial_{v} \theta[\I\cup \{\kappa_1\}]$ and $\partial_{v} \theta[\I\cup \{\kappa_3\}]$,
3) $\partial_{v} \theta[\I\cup \{\kappa_2\}]$ and $\partial_{v} \theta[\I\cup \{\kappa_3\}]$
one obtains three equalities:
\begin{multline*}
 \partial_v \theta[\I] = 
 \big(\theta[\I\cup\{j_m,j_n\}]\theta[\J^{(j_n\to \kappa_3)}]\theta[\J^{(j_m\to \kappa_3)}]\big)^{-1} \times \\
 \times \Big( \theta[\I\cup\{\kappa_2,j_m\}]\theta[\I\cup\{\kappa_2,j_n\}]
 \theta[\J^{(j_m,j_n \to \kappa_2,\kappa_3)}]  \partial_{v} \theta[\I\cup \{\kappa_1\}] \\
 - \theta[\I\cup\{\kappa_1,j_m\}]\theta[\I\cup\{\kappa_1,j_n\}]
 \theta[\J^{(j_m,j_n \to \kappa_1,\kappa_3)}] \partial_{v} \theta[\I\cup \{\kappa_2\}] \Big) \\
 = \big(\theta[\I\cup\{j_m,j_n\}]\theta[\J^{(j_n\to \kappa_2)}]\theta[\J^{(j_m\to \kappa_2)}]\big)^{-1} \times \\
 \times \Big( \theta[\I\cup\{\kappa_3,j_m\}]\theta[\I\cup\{\kappa_3,j_n\}]
 \theta[\J^{(j_m,j_n \to \kappa_2,\kappa_3)}]  \partial_{v} \theta[\I\cup \{\kappa_1\}] \\
 - \theta[\I\cup\{\kappa_1,j_m\}]\theta[\I\cup\{\kappa_1,j_n\}]
 \theta[\J^{(j_m,j_n \to \kappa_1,\kappa_2)}] \partial_{v} \theta[\I\cup \{\kappa_3\}] \Big) \\
 = \big(\theta[\I\cup\{j_m,j_n\}]\theta[\J^{(j_n\to \kappa_1)}]\theta[\J^{(j_m\to \kappa_1)}]\big)^{-1} \times \\
 \times \Big( \theta[\I\cup\{\kappa_3,j_m\}]\theta[\I\cup\{\kappa_3,j_n\}]
 \theta[\J^{(j_m,j_n \to \kappa_1,\kappa_3)}]  \partial_{v} \theta[\I\cup \{\kappa_2\}] \\
 - \theta[\I\cup\{\kappa_2,j_m\}]\theta[\I\cup\{\kappa_2,j_n\}]
 \theta[\J^{(j_m,j_n \to \kappa_1,\kappa_2)}] \partial_{v} \theta[\I\cup \{\kappa_3\}] \Big).
\end{multline*} 
They produce two relations between vectors $\partial_{v} \theta[\I\cup \{\kappa_1\}]$, 
$\partial_{v} \theta[\I\cup \{\kappa_2\}]$, 
and $\partial_{v} \theta[\I\cup \{\kappa_3\}]$, and these two relations are equivalent, so one gets \eqref{D1thetaLD}
from Proposition~\ref{P:D1thetaLD3}. Indeed, extract the two relations and solve them for $\partial_{v} \theta[\I_1\cup \{\kappa_3\}]$
\begin{multline}\label{D1ThetaLDi}
 \partial_{v} \theta[\I\cup \{\kappa_3\}]  
 =  \frac{\theta[\J^{(j_n\to \kappa_1)}]\theta[\J^{(j_m\to \kappa_1)}]
 \theta[\J^{(j_m,j_n \to \kappa_2,\kappa_3)}]}
 {\theta[\J^{(j_n\to \kappa_3)}]\theta[\J^{(j_m\to \kappa_3)}]\theta[\J^{(j_m,j_n \to \kappa_1,\kappa_2)}]} \\
 \bigg(\frac{\theta[\J^{(j_n\to \kappa_3)}]\theta[\J^{(j_m\to \kappa_3)}]
 \theta[\I\cup\{\kappa_3,j_m\}]\theta[\I\cup\{\kappa_3,j_n\}]}
 {\theta[\J^{(j_n\to \kappa_1)}]\theta[\J^{(j_m\to \kappa_1)}]
 \theta[\I\cup\{\kappa_1,j_m\}]\theta[\I\cup\{\kappa_1,j_n\}]} \\
 - \frac{\theta[\J^{(j_n\to \kappa_2)}]\theta[\J^{(j_m\to \kappa_2)}]
 \theta[\I\cup\{\kappa_2,j_m\}]\theta[\I\cup\{\kappa_2,j_n\}]}
 {\theta[\J^{(j_n\to \kappa_1)}]\theta[\J^{(j_m\to \kappa_1)}]
 \theta[\I\cup\{\kappa_1,j_m\}] \theta[\I\cup\{\kappa_1,j_n\}]} \bigg) 
 \partial_{v} \theta[\I\cup \{\kappa_1\}] \\
 +  \frac{\theta[\J^{(j_n\to \kappa_2)}]\theta[\J^{(j_m\to \kappa_2)}]\theta[\J^{(j_m,j_n \to \kappa_1,\kappa_3)}]}
 {\theta[\J^{(j_n\to \kappa_3)}]\theta[\J^{(j_m\to \kappa_3)}]\theta[\J^{(j_m,j_n \to \kappa_1,\kappa_2)}]}
 \partial_{v} \theta[\I\cup \{\kappa_2\}] \\
 = - \frac{\theta[\J^{(j_n\to \kappa_1)}]\theta[\J^{(j_m\to \kappa_1)}]\theta[\J^{(j_m,j_n \to \kappa_2,\kappa_3)}]}
 {\theta[\J^{(j_n\to \kappa_3)}]\theta[\J^{(j_m\to \kappa_3)}]\theta[\J^{(j_m,j_n \to \kappa_1,\kappa_2)}]}
  \partial_{v} \theta[\I\cup \{\kappa_1\}] \\
 + \frac{\theta[\J^{(j_n\to \kappa_2)}]\theta[\J^{(j_m\to \kappa_2)}]\theta[\J^{(j_m,j_n \to \kappa_1,\kappa_3)}]}
 {\theta[\J^{(j_n\to \kappa_3)}]\theta[\J^{(j_m\to \kappa_3)}]\theta[\J^{(j_m,j_n \to \kappa_1,\kappa_2)}]} \\
 \bigg(\frac{\theta[\J^{(j_n\to \kappa_3)}]\theta[\J^{(j_m\to \kappa_3)}]
 \theta[\I\cup\{\kappa_3,j_m\}]\theta[\I\cup\{\kappa_3,j_n\}]}
 {\theta[\J^{(j_n\to \kappa_2)}]\theta[\J^{(j_m\to \kappa_2)}]
 \theta[\I\cup\{\kappa_2,j_m\}]\theta[\I\cup\{\kappa_2,j_n\}]} \\
 + \frac{\theta[\J^{(j_n\to \kappa_1)}]\theta[\J^{(j_m\to \kappa_1)}]
 \theta[\I\cup\{\kappa_1,j_m\}]\theta[\I\cup\{\kappa_1,j_n\}]}
 {\theta[\J^{(j_n\to \kappa_2)}]\theta[\J^{(j_m\to \kappa_2)}]
 \theta[\I\cup\{\kappa_2,j_m\}]\theta[\I\cup\{\kappa_2,j_n\}]} \bigg) 
 \partial_{v} \theta[\I\cup \{\kappa_2\}].
\end{multline}
This implies a relation between $\partial_{v} \theta[\I\cup \{\kappa_1\}]$ and $\partial_{v} \theta[\I\cup \{\kappa_2\}]$
\begin{multline*}
 \frac{\theta[\J^{(j_n\to \kappa_1)}]\theta[\J^{(j_m\to \kappa_1)}]
 \theta[\J^{(j_m,j_n \to \kappa_2,\kappa_3)}]}
 {\theta[\J^{(j_n\to \kappa_3)}]\theta[\J^{(j_m\to \kappa_3)}]\theta[\J^{(j_m,j_n \to \kappa_1,\kappa_2)}]} \\
 \bigg(\frac{\theta[\J^{(j_n\to \kappa_3)}]\theta[\J^{(j_m\to \kappa_3)}]
 \theta[\I\cup\{\kappa_3,j_m\}]\theta[\I\cup\{\kappa_3,j_n\}]}
 {\theta[\J^{(j_n\to \kappa_1)}]\theta[\J^{(j_m\to \kappa_1)}]
 \theta[\I\cup\{\kappa_1,j_m\}]\theta[\I\cup\{\kappa_1,j_n\}]} \\
 - \frac{\theta[\J^{(j_n\to \kappa_2)}]\theta[\J^{(j_m\to \kappa_2)}]
 \theta[\I\cup\{\kappa_2,j_m\}]\theta[\I\cup\{\kappa_2,j_n\}]}
 {\theta[\J^{(j_n\to \kappa_1)}]\theta[\J^{(j_m\to \kappa_1)}]
 \theta[\I\cup\{\kappa_1,j_m\}] \theta[\I\cup\{\kappa_1,j_n\}]} + 1\bigg) 
 \partial_{v} \theta[\I\cup \{\kappa_1\}] \\
 =  \frac{\theta[\J^{(j_n\to \kappa_2)}]\theta[\J^{(j_m\to \kappa_2)}]\theta[\J^{(j_m,j_n \to \kappa_1,\kappa_3)}]}
 {\theta[\J^{(j_n\to \kappa_3)}]\theta[\J^{(j_m\to \kappa_3)}]\theta[\J^{(j_m,j_n \to \kappa_1,\kappa_2)}]} \\
 \bigg(\frac{\theta[\J^{(j_n\to \kappa_3)}]\theta[\J^{(j_m\to \kappa_3)}]
 \theta[\I\cup\{\kappa_3,j_m\}]\theta[\I\cup\{\kappa_3,j_n\}]}
 {\theta[\J^{(j_n\to \kappa_2)}]\theta[\J^{(j_m\to \kappa_2)}]
 \theta[\I\cup\{\kappa_2,j_m\}]\theta[\I\cup\{\kappa_2,j_n\}]} \\
 + \frac{\theta[\J^{(j_n\to \kappa_1)}]\theta[\J^{(j_m\to \kappa_1)}]
 \theta[\I\cup\{\kappa_1,j_m\}]\theta[\I\cup\{\kappa_1,j_n\}]}
 {\theta[\J^{(j_n\to \kappa_2)}]\theta[\J^{(j_m\to \kappa_2)}]
 \theta[\I\cup\{\kappa_2,j_m\}]\theta[\I\cup\{\kappa_2,j_n\}]} - 1\bigg) 
 \partial_{v} \theta[\I\cup \{\kappa_2\}].
\end{multline*}
And this relation is an identity, since the expressions in parentheses vanish identically
due to FTT Corollary~\ref{C:eklm}, namely 
\begin{gather*}
 \frac{\theta[\J^{(j_n\to \kappa_3)}]^2\theta[\I\cup\{\kappa_3,j_n\}]^2}
 {\theta[\J^{(j_n\to \kappa_1)}]^2\theta[\I\cup\{\kappa_1,j_n\}]^2}
 = \frac{\theta[\J^{(j_m\to \kappa_3)}]^2\theta[\I\cup\{\kappa_3,j_m\}]^2}
 {\theta[\J^{(j_m\to \kappa_1)}]^2\theta[\I\cup\{\kappa_1,j_m\}]^2} 
 = \frac{e_{\kappa_2} - e_{\kappa_1}}{e_{\kappa_3} - e_{\kappa_2}},\\
 \frac{\theta[\J^{(j_n\to \kappa_2)}]^2\theta[\I\cup\{\kappa_2,j_n\}]^2}
 {\theta[\J^{(j_n\to \kappa_1)}]^2\theta[\I\cup\{\kappa_1,j_n\}]^2}
 = \frac{\theta[\J^{(j_m\to \kappa_2)}]^2\theta[\I\cup\{\kappa_2,j_m\}]^2}
 {\theta[\J^{(j_m\to \kappa_1)}]^2\theta[\I\cup\{\kappa_1,j_m\}]^2} 
 = \frac{e_{\kappa_3} - e_{\kappa_1}}{e_{\kappa_3} - e_{\kappa_2}},\\
 \frac{\theta[\J^{(j_n\to \kappa_3)}]^2\theta[\I\cup\{\kappa_3,j_n\}]^2}
 {\theta[\J^{(j_n\to \kappa_2)}]^2\theta[\I\cup\{\kappa_2,j_n\}]^2}
 = \frac{\theta[\J^{(j_m\to \kappa_3)}]^2\theta[\I\cup\{\kappa_3,j_m\}]^2}
 {\theta[\J^{(j_m\to \kappa_2)}]^2\theta[\I\cup\{\kappa_2,j_m\}]^2} 
 = \frac{e_{\kappa_2} - e_{\kappa_1}}{e_{\kappa_3} - e_{\kappa_1}}.
\end{gather*}
Thus,
\begin{multline*}
 \frac{\theta[\J^{(j_n\to \kappa_3)}]\theta[\J^{(j_m\to \kappa_3)}]
 \theta[\I\cup\{\kappa_3,j_m\}]\theta[\I\cup\{\kappa_3,j_n\}]}
 {\theta[\J^{(j_n\to \kappa_1)}]\theta[\J^{(j_m\to \kappa_1)}]
 \theta[\I\cup\{\kappa_1,j_m\}]\theta[\I\cup\{\kappa_1,j_n\}]} \\
 - \frac{\theta[\J^{(j_n\to \kappa_2)}]\theta[\J^{(j_m\to \kappa_2)}]
 \theta[\I\cup\{\kappa_2,j_m\}]\theta[\I\cup\{\kappa_2,j_n\}]}
 {\theta[\J^{(j_n\to \kappa_1)}]\theta[\J^{(j_m\to \kappa_1)}]
 \theta[\I\cup\{\kappa_1,j_m\}] \theta[\I\cup\{\kappa_1,j_n\}]} + 1 \\
 = \frac{e_{\kappa_2} - e_{\kappa_1}}{e_{\kappa_3} - e_{\kappa_2}}
 - \frac{e_{\kappa_3} - e_{\kappa_1}}{e_{\kappa_3} - e_{\kappa_2}} + 1 = 0,
\end{multline*}
\begin{multline*}
 \frac{\theta[\J^{(j_n\to \kappa_3)}]\theta[\J^{(j_m\to \kappa_3)}]
 \theta[\I\cup\{\kappa_3,j_m\}]\theta[\I\cup\{\kappa_3,j_n\}]}
 {\theta[\J^{(j_n\to \kappa_2)}]\theta[\J^{(j_m\to \kappa_2)}]
 \theta[\I\cup\{\kappa_2,j_m\}]\theta[\I\cup\{\kappa_2,j_n\}]} \\
 + \frac{\theta[\J^{(j_n\to \kappa_1)}]\theta[\J^{(j_m\to \kappa_1)}]
 \theta[\I\cup\{\kappa_1,j_m\}]\theta[\I\cup\{\kappa_1,j_n\}]}
 {\theta[\J^{(j_n\to \kappa_2)}]\theta[\J^{(j_m\to \kappa_2)}]
 \theta[\I\cup\{\kappa_2,j_m\}]\theta[\I\cup\{\kappa_2,j_n\}]} - 1 \\
 = \frac{e_{\kappa_2} - e_{\kappa_1}}{e_{\kappa_3} - e_{\kappa_1}}
 + \frac{e_{\kappa_3} - e_{\kappa_2}}{e_{\kappa_3} - e_{\kappa_1}} - 1 = 0.
\end{multline*}
Therefore, \eqref{D1ThetaLDi} contains one relation between vectors $\partial_{v} \theta[\I\cup \{\kappa_1\}]$, 
$\partial_{v} \theta[\I\cup \{\kappa_2\}]$, and $\partial_{v} \theta[\I\cup \{\kappa_3\}]$,
and every two of the vectors are linearly independent. The relation simplifies to the form
\begin{multline*}
  \theta[\J^{(j_n\to \kappa_1)}]\theta[\J^{(j_m\to \kappa_1)}]\theta[\J^{(j_m,j_n \to \kappa_2,\kappa_3)}]
  \partial_{v} \theta[\I\cup \{\kappa_1\}] \\
 - \theta[\J^{(j_n\to \kappa_2)}]\theta[\J^{(j_m\to \kappa_2)}]\theta[\J^{(j_m,j_n \to \kappa_1,\kappa_3)}]
 \partial_{v} \theta[\I\cup \{\kappa_2\}] \\
 + \theta[\J^{(j_n\to \kappa_3)}]\theta[\J^{(j_m\to \kappa_3)}]\theta[\J^{(j_m,j_n \to \kappa_1,\kappa_2)}]
  \partial_{v} \theta[\I\cup \{\kappa_3\}] = 0.
\end{multline*}
\end{proof}

As an example, in genus $2$ one finds
\begin{multline*}
  \theta^{\{1,4\}}\theta^{\{1,5\}}\theta^{\{2,3\}} \partial_{v} \theta^{\{1\}}
 - \theta^{\{2,4\}}\theta^{\{2,5\}}\theta^{\{1,3\}} \partial_{v} \theta^{\{2\}} \\
 + \theta^{\{3,4\}}\theta^{\{3,5\}}\theta^{\{1,2\}} \partial_{v} \theta^{\{3\}} = 0,
\end{multline*}
and in genus $3$ 
\begin{multline*}
  \theta^{\{2,6,7\}}\theta^{\{5,2,7\}}\theta^{\{3,4,7\}} \partial_{v} \theta^{\{1,2\}}
 - \theta^{\{3,6,7\}}\theta^{\{5,3,7\}}\theta^{\{2,4,7\}} \partial_{v} \theta^{\{1,3\}} \\
 + \theta^{\{4,6,7\}}\theta^{\{5,4,7\}}\theta^{\{2,3,7\}} \partial_{v} \theta^{\{1,4\}} = 0.
\end{multline*}
Recall that $\theta^{\{i,j\}}$ stands for $\theta[\{i,j\}](0;\tau)$, as well as
$\partial_{v} \theta^{\{i\}}$ stands for $\grad_v \theta[\{i\}](v;\tau) |_{v=0}$.

\section{Proof of Proposition~\ref{P:D1thetaLD4}}\label{A:D1thetaLD4}

\begin{proof}
The set $\I=\{i_1,\,\dots$, $i_{g-3}\}$ corresponds to a characteristic of multiplicity~$2$, 
then joining two indices from $\{\kappa_1,\,\kappa_2,\,\kappa_3,\,\kappa_4,\,\kappa_5\}$
results into a partition $\I\cup \{\kappa_i,\kappa_j\}$  corresponding to a characteristic of multiplicity $1$.
Write down equality \eqref{D1thetaLDG} with the following gradients:
\begin{enumerate}
\renewcommand{\theenumi}{\roman{enumi}}
 \item $\partial_{v} \theta[\I\cup \{\kappa_1,\kappa_2\}]$,
$\partial_{v} \theta[\I\cup \{\kappa_1,\kappa_3\}]$ and $\partial_{v} \theta[\I\cup \{\kappa_1,\kappa_5\}]$
with the common set $\I \cup \{\kappa_1\}$ denoted by $\I$ in \eqref{D1thetaLDG},
 \item $\partial_{v} \theta[\I\cup \{\kappa_1,\kappa_2\}]$,
$\partial_{v} \theta[\I\cup \{\kappa_2,\kappa_3\}]$ and $\partial_{v} \theta[\I\cup \{\kappa_2,\kappa_5\}]$
with the common set $\I \cup \{\kappa_2\}$,
 \item $\partial_{v} \theta[\I\cup \{\kappa_1,\kappa_5\}]$,
$\partial_{v} \theta[\I\cup \{\kappa_2,\kappa_5\}]$ and $\partial_{v} \theta[\I\cup \{\kappa_4,\kappa_5\}]$
with the common set $\I \cup \{\kappa_5\}$,
\end{enumerate}
and obtain three relations, namely:
 \begin{multline*}
  \theta[\J^{(j_n\to \kappa_2,\kappa_4)}]\theta[\J^{(j_m\to \kappa_2,\kappa_4)}]\theta[\J^{(j_m,j_n \to \kappa_3,\kappa_4,\kappa_5)}]
  \partial_{v} \theta[\I\cup \{\kappa_1,\kappa_2\}] \\
 - \theta[\J^{(j_n\to \kappa_3,\kappa_4)}]\theta[\J^{(j_m\to \kappa_3,\kappa_4)}]\theta[\J^{(j_m,j_n \to \kappa_2,\kappa_4,\kappa_5)}]
 \partial_{v} \theta[\I\cup \{\kappa_1,\kappa_3\}] \\
 + \theta[\J^{(j_n\to \kappa_4,\kappa_5)}]\theta[\J^{(j_m\to \kappa_4,\kappa_5)}]\theta[\J^{(j_m,j_n \to \kappa_2,\kappa_3,\kappa_4)}]
  \partial_{v} \theta[\I\cup \{\kappa_1,\kappa_5\}] = 0,
\end{multline*}
 \begin{multline*}
  \theta[\J^{(j_n\to \kappa_1,\kappa_4)}]\theta[\J^{(j_m\to \kappa_1,\kappa_4)}]\theta[\J^{(j_m,j_n \to \kappa_3,\kappa_4,\kappa_5)}]
  \partial_{v} \theta[\I\cup \{\kappa_1,\kappa_2\}] \\
 - \theta[\J^{(j_n\to \kappa_3,\kappa_4)}]\theta[\J^{(j_m\to \kappa_3,\kappa_4)}]\theta[\J^{(j_m,j_n \to \kappa_1,\kappa_4,\kappa_5)}]
 \partial_{v} \theta[\I\cup \{\kappa_2,\kappa_3\}] \\
 + \theta[\J^{(j_n\to \kappa_4,\kappa_5)}]\theta[\J^{(j_m\to \kappa_4,\kappa_5)}]\theta[\J^{(j_m,j_n \to \kappa_1,\kappa_3,\kappa_4)}]
  \partial_{v} \theta[\I\cup \{\kappa_2,\kappa_5\}] = 0,
\end{multline*} 
\begin{multline*}
  \theta[\J^{(j_n\to \kappa_1,\kappa_3)}]\theta[\J^{(j_m\to \kappa_1,\kappa_3)}]\theta[\J^{(j_m,j_n \to \kappa_2,\kappa_3,\kappa_4)}]
  \partial_{v} \theta[\I\cup \{\kappa_1,\kappa_5\}] \\
 - \theta[\J^{(j_n\to \kappa_2,\kappa_3)}]\theta[\J^{(j_m\to \kappa_2,\kappa_3)}]\theta[\J^{(j_m,j_n \to \kappa_1,\kappa_3,\kappa_4)}]
 \partial_{v} \theta[\I\cup \{\kappa_2,\kappa_5\}] \\
 + \theta[\J^{(j_n\to \kappa_3,\kappa_4)}]\theta[\J^{(j_m\to \kappa_3,\kappa_4)}]\theta[\J^{(j_m,j_n \to \kappa_1,\kappa_2,\kappa_3)}]
  \partial_{v} \theta[\I\cup \{\kappa_4,\kappa_5\}] = 0.
\end{multline*}
Next, eliminate vectors $\partial_{v} \theta[\I\cup \{\kappa_1,\kappa_5\}]$ and
$\partial_{v} \theta[\I\cup \{\kappa_2,\kappa_5\}]$ from the relations, and come to
\begin{multline}\label{D1ThetaLD4i}
 \Big( - \theta[\J^{(j_n\to \kappa_1,\kappa_3)}]\theta[\J^{(j_m\to \kappa_1,\kappa_3)}] 
  \theta[\J^{(j_n\to \kappa_2,\kappa_4)}]\theta[\J^{(j_m\to \kappa_2,\kappa_4)}] \\
 + \theta[\J^{(j_n\to \kappa_2,\kappa_3)}]\theta[\J^{(j_m\to \kappa_2,\kappa_3)}]
 \theta[\J^{(j_n\to \kappa_1,\kappa_4)}]\theta[\J^{(j_m\to \kappa_1,\kappa_4)}] \Big) \times \\ \times
 \frac{\theta[\J^{(j_m,j_n \to \kappa_3,\kappa_4,\kappa_5)}]}
 {\theta[\J^{(j_n\to \kappa_3,\kappa_4)}]\theta[\J^{(j_m\to \kappa_3,\kappa_4)}]} 
  \partial_{v} \theta[\I\cup \{\kappa_1,\kappa_2\}] \\
 + \theta[\J^{(j_n\to \kappa_1,\kappa_3)}]\theta[\J^{(j_m\to \kappa_1,\kappa_3)}]
 \theta[\J^{(j_m,j_n \to \kappa_2,\kappa_4,\kappa_5)}]
 \partial_{v} \theta[\I\cup \{\kappa_1,\kappa_3\}] \\
 - \theta[\J^{(j_n\to \kappa_2,\kappa_3)}]\theta[\J^{(j_m\to \kappa_2,\kappa_3)}] 
 \theta[\J^{(j_m,j_n \to \kappa_1,\kappa_4,\kappa_5)}]
 \partial_{v} \theta[\I\cup \{\kappa_2,\kappa_3\}] \\
 + \theta[\J^{(j_n\to \kappa_4,\kappa_5)}]\theta[\J^{(j_m\to \kappa_4,\kappa_5)}] 
 \theta[\J^{(j_m,j_n \to \kappa_1,\kappa_2,\kappa_3)}]
  \partial_{v} \theta[\I\cup \{\kappa_4,\kappa_5\}] = 0.
\end{multline}
With the help of FTT Corollary~\ref{C:eklm} the following holds
\begin{multline*}
  - \frac{\theta[\J^{(j_n\to \kappa_1,\kappa_3)}]\theta[\J^{(j_m\to \kappa_1,\kappa_3)}] 
  \theta[\J^{(j_n\to \kappa_2,\kappa_4)}]\theta[\J^{(j_m\to \kappa_2,\kappa_4)}]}
 {\theta[\J^{(j_n\to \kappa_3,\kappa_4)}]\theta[\J^{(j_m\to \kappa_3,\kappa_4)}]
 \theta[\J^{(j_n\to \kappa_1,\kappa_2)}]\theta[\J^{(j_m\to \kappa_1,\kappa_2)}]} \\
 + \frac{\theta[\J^{(j_n\to \kappa_2,\kappa_3)}]\theta[\J^{(j_m\to \kappa_2,\kappa_3)}]
 \theta[\J^{(j_n\to \kappa_1,\kappa_4)}]\theta[\J^{(j_m\to \kappa_1,\kappa_4)}]}
 {\theta[\J^{(j_n\to \kappa_3,\kappa_4)}]\theta[\J^{(j_m\to \kappa_3,\kappa_4)}]
 \theta[\J^{(j_n\to \kappa_1,\kappa_2)}]\theta[\J^{(j_m\to \kappa_1,\kappa_2)}]} + 1 \\
 = - \frac{\theta[\J^{(j_n\to \kappa_1,\kappa_3)}] \theta[\I\cup\{j_n,\kappa_1,\kappa_5\}] 
  \theta[\J^{(j_m\to \kappa_1,\kappa_3)}] \theta[\I\cup\{j_m,\kappa_1,\kappa_5\}]}
 {\theta[\J^{(j_n\to \kappa_3,\kappa_4)}] \theta[\I\cup\{j_n,\kappa_4,\kappa_5\}]
 \theta[\J^{(j_m\to \kappa_3,\kappa_4)}] \theta[\I\cup\{j_m,\kappa_4,\kappa_5\}]} \times \\
 \times \frac{\theta[\J^{(j_n\to \kappa_2,\kappa_4)}]\theta[\I\cup\{j_n,\kappa_4,\kappa_5\}]
 \theta[\J^{(j_m\to \kappa_2,\kappa_4)}]\theta[\I\cup\{j_m,\kappa_4,\kappa_5\}]}
 {\theta[\J^{(j_n\to \kappa_1,\kappa_2)}]\theta[\I\cup\{j_n,\kappa_1,\kappa_5\}]
 \theta[\J^{(j_m\to \kappa_1,\kappa_2)}]\theta[\I\cup\{j_m,\kappa_1,\kappa_5\}]}  \\
 + \frac{\theta[\J^{(j_n\to \kappa_2,\kappa_3)}]\theta[\I\cup\{j_n,\kappa_2,\kappa_5\}]
 \theta[\J^{(j_m\to \kappa_2,\kappa_3)}]\theta[\I\cup\{j_n,\kappa_2,\kappa_5\}]}
 {\theta[\J^{(j_n\to \kappa_3,\kappa_4)}]\theta[\I\cup\{j_n,\kappa_4,\kappa_5\}]
 \theta[\J^{(j_m\to \kappa_3,\kappa_4)}]\theta[\I\cup\{j_n,\kappa_4,\kappa_5\}]} \times \\
 \times \frac{\theta[\J^{(j_n\to \kappa_1,\kappa_4)}]\theta[\I\cup\{j_n,\kappa_4,\kappa_5\}]
 \theta[\J^{(j_m\to \kappa_1,\kappa_4)}]\theta[\I\cup\{j_n,\kappa_4,\kappa_5\}]}
 {\theta[\J^{(j_n\to \kappa_1,\kappa_2)}]\theta[\I\cup\{j_n,\kappa_2,\kappa_5\}]
 \theta[\J^{(j_m\to \kappa_1,\kappa_2)}]\theta[\I\cup\{j_n,\kappa_2,\kappa_5\}]} + 1 \\
 = - \frac{(e_{\kappa_4}-e_{\kappa_2})(e_{\kappa_3}-e_{\kappa_1})}{(e_{\kappa_2}-e_{\kappa_1})(e_{\kappa_4}-e_{\kappa_3})} 
 + \frac{(e_{\kappa_4}-e_{\kappa_1})(e_{\kappa_3}-e_{\kappa_2})}{(e_{\kappa_2}-e_{\kappa_1})(e_{\kappa_4}-e_{\kappa_3})} + 1
 = 0,
\end{multline*}
and \eqref{D1ThetaLD4i} simplifies to 
\begin{multline*}
 - \theta[\J^{(j_n\to \kappa_1,\kappa_2)}]\theta[\J^{(j_m\to \kappa_1,\kappa_2)}]
 \theta[\J^{(j_m,j_n \to \kappa_3,\kappa_4,\kappa_5)}]
  \partial_{v} \theta[\I\cup \{\kappa_1,\kappa_2\}] \\
 + \theta[\J^{(j_n\to \kappa_1,\kappa_3)}]\theta[\J^{(j_m\to \kappa_1,\kappa_3)}]
 \theta[\J^{(j_m,j_n \to \kappa_2,\kappa_4,\kappa_5)}]
 \partial_{v} \theta[\I\cup \{\kappa_1,\kappa_3\}] \\
 - \theta[\J^{(j_n\to \kappa_2,\kappa_3)}]\theta[\J^{(j_m\to \kappa_2,\kappa_3)}] 
 \theta[\J^{(j_m,j_n \to \kappa_1,\kappa_4,\kappa_5)}]
 \partial_{v} \theta[\I\cup \{\kappa_2,\kappa_3\}] \\
 + \theta[\J^{(j_n\to \kappa_4,\kappa_5)}]\theta[\J^{(j_m\to \kappa_4,\kappa_5)}] 
 \theta[\J^{(j_m,j_n \to \kappa_1,\kappa_2,\kappa_3)}]
  \partial_{v} \theta[\I\cup \{\kappa_4,\kappa_5\}] = 0,
\end{multline*}
which coincides with \eqref{D1ThetaLD4a}.

Linear independence of any three vectors in \eqref{D1ThetaLD4a}, 
say $\partial_v \theta[\mathcal{I}\cup \{\kappa_1,\kappa_2\}]$,
$\partial_v \theta[\mathcal{I}\cup \{\kappa_1,\kappa_3\}]$ and $\partial_v \theta[\mathcal{I}\cup \{\kappa_2,\kappa_3\}]$, 
follows from the fact that there is no relation between these theta derivatives due to Proposition~\ref{P:D1thetaLD3}',
according to which a relation between three theta derivatives 
exists only if characteristics of these theta derivatives are such that
the intersection of the corresponding sets of indices has cardinality $g-2$.
Instead, intersection $\mathcal{I}$ of the sets corresponding to characteristics
$[\mathcal{I}\cup \{\kappa_1,\kappa_2\}]$,
$[\mathcal{I}\cup \{\kappa_1,\kappa_3\}]$, $[\mathcal{I}\cup \{\kappa_2,\kappa_3\}]$ has 
 cardinality $n-3$.
\end{proof}

\section{Second derivative theta relations}\label{A:D2thetaLD4Ex}
\subsection{Genus $3$}
Formula \eqref{DI2RelG3} gives
$35$ representations (excluding different representations of ratios of theta constants
following from FTT Corollary~\ref{C:eklm}) for each entry of $\partial^2_v \theta^{\emptyset}$,
all the relations are equivalent.
In particular, with $\I_0=\{1,2,3\}$, $j_1=6$, $j_2=5$
\begin{multline*}
 \partial_{v_{n_1},v_{n_2}}^2 \theta^{\emptyset} = 
 \big(\theta^{\{1,2,3\}}\theta^{\{4,5,7\}}\theta^{\{4,6,7\}}\big)^{-1} \times \\ \times
 \Big({-} \theta^{\{3,5,6\}}\theta^{\{3,4,7\}}\theta^{\{1,2,5\}}\theta^{\{1,2,6\}}
 \big(\theta^{\{1,5,6\}}\theta^{\{2,5,6\}}\big)^{-1} \times \\ \times
 \big( \partial_{v_{n_1}} \theta^{\{2,3\}} \partial_{v_{n_2}} \theta^{\{1,3\}} 
 + \partial_{v_{n_2}} \theta^{\{2,3\}} \partial_{v_{n_1}} \theta^{\{1,3\}}\big)\\
 + \theta^{\{2,5,6\}}\theta^{\{2,4,7\}}\theta^{\{1,3,5\}}\theta^{\{1,3,6\}}
 \big(\theta^{\{1,5,6\}}\theta^{\{3,5,6\}}\big)^{-1} \times \\ \times
 \big( \partial_{v_{n_1}} \theta^{\{2,3\}} \partial_{v_{n_2}} \theta^{\{1,2\}} 
 + \partial_{v_{n_2}} \theta^{\{2,3\}} \partial_{v_{n_1}} \theta^{\{1,2\}}\big)\\
 - \theta^{\{1,5,6\}}\theta^{\{1,4,7\}}\theta^{\{2,3,5\}}\theta^{\{2,3,6\}}
 \big(\theta^{\{2,5,6\}}\theta^{\{3,5,6\}}\big)^{-1} \times \\ \times
 \big( \partial_{v_{n_1}} \theta^{\{1,3\}} \partial_{v_{n_2}} \theta^{\{1,2\}} 
 + \partial_{v_{n_2}} \theta^{\{1,3\}} \partial_{v_{n_1}} \theta^{\{1,2\}}\big)
\end{multline*}
or with characteristics in the standard form
\begin{multline*}
 \partial_{v_{n_1},v_{n_2}}^2 \theta[{}^{111}_{101}] = 
 \big(\theta[{}^{101}_{101}] \theta[{}^{100}_{010}]\theta[{}^{100}_{011}]\big)^{-1} \times \\ \times
 \Big({-}\theta[{}^{101}_{000}]\theta[{}^{111}_{000}]\theta[{}^{110}_{111}]\theta[{}^{110}_{110}]
 \big(\theta[{}^{011}_{100}]\theta[{}^{011}_{000}]\big)^{-1} \times \\ \times
 \big( \partial_{v_{n_1}} \theta[{}^{001}_{101}] \partial_{v_{n_2}} \theta[{}^{001}_{001}]
 + \partial_{v_{n_2}} \theta[{}^{001}_{101}] \partial_{v_{n_1}} \theta[{}^{001}_{001}]\big)\\
 + \theta[{}^{011}_{000}]\theta[{}^{001}_{000}]\theta[{}^{000}_{111}]\theta[{}^{000}_{110}]
 \big(\theta[{}^{011}_{100}]\theta[{}^{101}_{000}]\big)^{-1} \times \\ \times
 \big( \partial_{v_{n_1}} \theta[{}^{001}_{101}] \partial_{v_{n_2}} \theta[{}^{111}_{001}] 
 + \partial_{v_{n_2}} \theta[{}^{001}_{101}] \partial_{v_{n_1}} \theta[{}^{111}_{001}]\big)\\
 - \theta[{}^{011}_{100}]\theta[{}^{001}_{100}]\theta[{}^{000}_{011}]\theta[{}^{000}_{010}]
 \big(\theta[{}^{011}_{000}]\theta[{}^{101}_{000}]\big)^{-1} \times \\ \times
 \big( \partial_{v_{n_1}} \theta[{}^{001}_{001}] \partial_{v_{n_2}} \theta[{}^{111}_{001}] 
 + \partial_{v_{n_2}} \theta[{}^{001}_{001}] \partial_{v_{n_1}} \theta[{}^{111}_{001}]\big);
\end{multline*}
and with $\I_0=\{1,2,4\}$, $j_1=6$, $j_2=5$
\begin{multline*}
 \partial_{v_{n_1},v_{n_2}}^2 \theta^{\emptyset} = 
 \big(\theta^{\{1,2,4\}}\theta^{\{3,5,7\}}\theta^{\{3,6,7\}}\big)^{-1} \times \\ \times
 \Big({-} \theta^{\{4,5,6\}}\theta^{\{3,4,7\}}\theta^{\{1,2,5\}}\theta^{\{1,2,6\}}
 \big(\theta^{\{1,5,6\}}\theta^{\{2,5,6\}}\big)^{-1} \times \\ \times
 \big( \partial_{v_{n_1}} \theta^{\{2,4\}} \partial_{v_{n_2}} \theta^{\{1,4\}} 
 + \partial_{v_{n_2}} \theta^{\{2,4\}} \partial_{v_{n_1}} \theta^{\{1,4\}}\big)\\
 + \theta^{\{2,5,6\}}\theta^{\{2,3,7\}}\theta^{\{1,4,5\}}\theta^{\{1,4,6\}}
 \big(\theta^{\{1,5,6\}}\theta^{\{4,5,6\}}\big)^{-1} \times \\ \times
 \big( \partial_{v_{n_1}} \theta^{\{2,4\}} \partial_{v_{n_2}} \theta^{\{1,2\}} 
 + \partial_{v_{n_2}} \theta^{\{2,4\}} \partial_{v_{n_1}} \theta^{\{1,2\}}\big)\\
 - \theta^{\{1,5,6\}}\theta^{\{1,3,7\}}\theta^{\{2,4,5\}}\theta^{\{2,4,6\}}
 \big(\theta^{\{2,5,6\}}\theta^{\{4,5,6\}}\big)^{-1} \times \\ \times
 \big( \partial_{v_{n_1}} \theta^{\{1,4\}} \partial_{v_{n_2}} \theta^{\{1,2\}} 
 + \partial_{v_{n_2}} \theta^{\{1,4\}} \partial_{v_{n_1}} \theta^{\{1,2\}}\big)
\end{multline*}
or with characteristics in the standard form
\begin{multline*}
 \partial_{v_{n_1},v_{n_2}}^2 \theta[{}^{111}_{101}] = 
 \big(\theta[{}^{101}_{111}]\theta[{}^{100}_{000}] \theta[{}^{100}_{001}]\big)^{-1} \times \\ \times
 \Big({-} \theta[{}^{101}_{010}]\theta[{}^{111}_{000}]\theta[{}^{110}_{111}]\theta[{}^{110}_{110}]
 \big(\theta[{}^{011}_{100}]\theta[{}^{011}_{000}]\big)^{-1} \times \\ \times
 \big( \partial_{v_{n_1}} \theta[{}^{001}_{111}] \partial_{v_{n_2}} \theta[{}^{001}_{011}] 
 + \partial_{v_{n_2}} \theta[{}^{001}_{111}] \partial_{v_{n_1}} \theta[{}^{001}_{011}]\big)\\
 + \theta[{}^{011}_{000}]\theta[{}^{001}_{010}]\theta[{}^{000}_{101}]\theta[{}^{000}_{100}]
 \big(\theta[{}^{011}_{100}]\theta[{}^{101}_{010}]\big)^{-1} \times \\ \times
 \big( \partial_{v_{n_1}} \theta[{}^{001}_{111}] \partial_{v_{n_2}} \theta[{}^{111}_{001}] 
 + \partial_{v_{n_2}} \theta[{}^{001}_{111}] \partial_{v_{n_1}} \theta[{}^{111}_{001}]\big)\\
 - \theta[{}^{011}_{100}]\theta[{}^{001}_{110}]\theta[{}^{000}_{001}]\theta[{}^{000}_{000}]
 \big(\theta[{}^{011}_{000}]\theta[{}^{101}_{010}]\big)^{-1} \times \\ \times
 \big( \partial_{v_{n_1}} \theta[{}^{001}_{011}] \partial_{v_{n_2}} \theta[{}^{111}_{001}] 
 + \partial_{v_{n_2}} \theta[{}^{001}_{011}] \partial_{v_{n_1}} \theta[{}^{111}_{001}]\big).
\end{multline*}

\subsection{Genus $4$} There exist $9$ characteristics of multiplicity $2$ 
of the form $[\{\iota\}]$, and the corresponding second order theta derivatives 
are given by the formula \eqref{DI2RelG4}. 
Let $\I_0=\{1,2,3,4\}$ and $j_1=5$, $j_2=6$, then one obtains with $\iota=1$
\begin{multline*}
\partial_{v_{n_1},v_{n_2}}^2 \theta^{\{1\}} = 
 \frac{1}{\theta^{\{1,2,3,4\}}\theta^{\{6,7,8,9\}}\theta^{\{5,7,8,9\}}} \times \\ \times
 \Big(- \theta^{\{1,4,5,6\}}\theta^{\{4,7,8,9\}}\theta^{\{1,2,3,5\}}\theta^{\{1,2,3,6\}}
 \big(\theta^{\{1,2,5,6\}}\theta^{\{1,3,5,6\}}\big)^{-1} \times \\ \times
 \big( \partial_{v_{n_1}} \theta^{\{1,3,4\}} \partial_{v_{n_2}} \theta^{\{1,2,4\}} 
 + \partial_{v_{n_2}} \theta^{\{1,3,4\}} \partial_{v_{n_1}} \theta^{\{1,2,4\}}\big)\\
 + \theta^{\{1,3,5,6\}}\theta^{\{3,7,8,9\}}\theta^{\{1,2,4,5\}}\theta^{\{1,2,4,6\}}
 \big(\theta^{\{1,2,5,6\}}\theta^{\{1,4,5,6\}}\big)^{-1} \times \\ \times
 \big( \partial_{v_{n_1}} \theta^{\{1,3,4\}} \partial_{v_{n_2}} \theta^{\{1,2,3\}} 
 + \partial_{v_{n_2}} \theta^{\{1,3,4\}} \partial_{v_{n_1}} \theta^{\{1,2,3\}}\big)\\
 - \theta^{\{1,2,5,6\}}\theta^{\{2,7,8,9\}}\theta^{\{1,3,4,5\}}\theta^{\{1,3,4,6\}}
 \big(\theta^{\{1,3,5,6\}}\theta^{\{1,4,5,6\}}\big)^{-1} \times \\ \times
 \big( \partial_{v_{n_1}} \theta^{\{1,2,4\}} \partial_{v_{n_2}} \theta^{\{1,2,3\}} 
 + \partial_{v_{n_2}} \theta^{\{1,2,4\}} \partial_{v_{n_1}} \theta^{\{1,2,3\}}\big)
\end{multline*}
or in the standard form
\begin{multline*}
 \partial_{v_{n_1},v_{n_2}}^2 \theta[{}^{0111}_{0101}] = 
 \big(\theta[{}^{1111}_{1001}]\theta[{}^{1101}_{0101}] \theta[{}^{1101}_{0111}]\big)^{-1} \times \\ \times
 \Big({-} \theta[{}^{0011}_{1011}]\theta[{}^{1011}_{0111}]\theta[{}^{1001}_{1001}]\theta[{}^{1001}_{1011}]
 \big(\theta[{}^{1111}_{1111}]\theta[{}^{0011}_{1111}]\big)^{-1} \times \\ \times
 \big( \partial_{v_{n_1}} \theta[{}^{0111}_{0001}] \partial_{v_{n_2}} \theta[{}^{1011}_{0001}] 
 + \partial_{v_{n_2}} \theta[{}^{0111}_{0001}] \partial_{v_{n_1}} \theta[{}^{1011}_{0001}]\big)\\
 + \theta[{}^{0011}_{1111}]\theta[{}^{1011}_{0011}]\theta[{}^{1001}_{1101}]\theta[{}^{1001}_{1111}]
 \big(\theta[{}^{1111}_{1111}]\theta[{}^{0011}_{1011}]\big)^{-1} \times \\ \times
 \big( \partial_{v_{n_1}} \theta[{}^{0111}_{0001}] \partial_{v_{n_2}} \theta[{}^{1011}_{0101}] 
 + \partial_{v_{n_2}} \theta[{}^{0111}_{0001}] \partial_{v_{n_1}} \theta[{}^{1011}_{0101}]\big)\\
 - \theta[{}^{1111}_{1111}]\theta[{}^{0111}_{0011}]\theta[{}^{0101}_{1101}]\theta[{}^{0101}_{1111}]
 \big(\theta[{}^{0011}_{1111}]\theta[{}^{0011}_{1011}]\big)^{-1} \times \\ \times
 \big( \partial_{v_{n_1}} \theta[{}^{1011}_{0001}] \partial_{v_{n_2}} \theta[{}^{1011}_{0101}] 
 + \partial_{v_{n_2}} \theta[{}^{1011}_{0001}] \partial_{v_{n_1}} \theta[{}^{1011}_{0101}]\big);
\end{multline*}
and with $\iota=2$
\begin{multline*}
\partial_{v_{n_1},v_{n_2}}^2 \theta^{\{2\}} = 
 \frac{1}{\theta^{\{1,2,3,4\}}\theta^{\{6,7,8,9\}}\theta^{\{5,7,8,9\}}} \times \\ \times
 \Big(- \theta^{\{2,4,5,6\}}\theta^{\{4,7,8,9\}}\theta^{\{1,2,3,5\}}\theta^{\{1,2,3,6\}}
 \big(\theta^{\{1,2,5,6\}}\theta^{\{2,3,5,6\}}\big)^{-1} \times \\ \times
 \big( \partial_{v_{n_1}} \theta^{\{2,3,4\}} \partial_{v_{n_2}} \theta^{\{1,2,4\}} 
 + \partial_{v_{n_2}} \theta^{\{2,3,4\}} \partial_{v_{n_1}} \theta^{\{1,2,4\}}\big)\\
 + \theta^{\{2,3,5,6\}}\theta^{\{3,7,8,9\}}\theta^{\{1,2,4,5\}}\theta^{\{1,2,4,6\}}
 \big(\theta^{\{1,2,5,6\}}\theta^{\{2,4,5,6,\}}\big)^{-1} \times \\ \times
 \big( \partial_{v_{n_1}} \theta^{\{2,3,4\}} \partial_{v_{n_2}} \theta^{\{1,2,3\}} 
 + \partial_{v_{n_2}} \theta^{\{2,3,4\}} \partial_{v_{n_1}} \theta^{\{1,2,3\}}\big)\\
 - \theta^{\{1,2,5,6\}}\theta^{\{1,7,8,9\}}\theta^{\{2,3,4,5\}}\theta^{\{2,3,4,6\}}
 \big(\theta^{\{2,3,5,6\}}\theta^{\{2,4,5,6\}}\big)^{-1} \times \\ \times
 \big( \partial_{v_{n_1}} \theta^{\{1,2,4\}} \partial_{v_{n_2}} \theta^{\{1,2,3\}} 
 + \partial_{v_{n_2}} \theta^{\{1,2,4\}} \partial_{v_{n_1}} \theta^{\{1,2,3\}}\big)
\end{multline*}
or in the standard form
\begin{multline*}
 \partial_{v_{n_1},v_{n_2}}^2 \theta[{}^{0111}_{1101}] = 
 \big(\theta[{}^{1111}_{1001}]\theta[{}^{1101}_{0101}] \theta[{}^{1101}_{0111}]\big)^{-1} \times \\ \times
 \Big({-} \theta[{}^{0011}_{1011}]\theta[{}^{1011}_{0111}]\theta[{}^{1001}_{1001}]\theta[{}^{1001}_{1011}]
 \big(\theta[{}^{1111}_{1111}]\theta[{}^{0011}_{1111}]\big)^{-1} \times \\ \times
 \big( \partial_{v_{n_1}} \theta[{}^{0111}_{1001}] \partial_{v_{n_2}} \theta[{}^{1011}_{0001}] 
 + \partial_{v_{n_2}} \theta[{}^{0111}_{1001}] \partial_{v_{n_1}} \theta[{}^{1011}_{0001}]\big)\\
 + \theta[{}^{0011}_{1111}]\theta[{}^{1011}_{0011}]\theta[{}^{1001}_{1101}]\theta[{}^{1001}_{1111}]
 \big(\theta[{}^{1111}_{1111}]\theta[{}^{0011}_{1011}]\big)^{-1} \times \\ \times
 \big( \partial_{v_{n_1}} \theta[{}^{0111}_{1001}] \partial_{v_{n_2}} \theta[{}^{1011}_{0101}] 
 + \partial_{v_{n_2}} \theta[{}^{0111}_{1001}] \partial_{v_{n_1}} \theta[{}^{1011}_{0101}]\big)\\
 - \theta[{}^{1111}_{1111}]\theta[{}^{0111}_{0011}]\theta[{}^{0101}_{1101}]\theta[{}^{0101}_{1111}]
 \big(\theta[{}^{0011}_{1111}]\theta[{}^{0011}_{1011}]\big)^{-1} \times \\ \times
 \big( \partial_{v_{n_1}} \theta[{}^{1011}_{0001}] \partial_{v_{n_2}} \theta[{}^{1011}_{0101}] 
 + \partial_{v_{n_2}} \theta[{}^{1011}_{0001}] \partial_{v_{n_1}} \theta[{}^{1011}_{0101}]\big).
\end{multline*}

Let $\I_0=\{i_1$, $i_2$, $i_3$, $i_4\}$, $\J_0=\{j_1$, $j_2$, $j_3$, $j_4$, $j_5\}$ and $j_m=j_1$, $j_n=j_2$.
Second order theta derivatives with characteristic $\emptyset$ are given by the following formula
\begin{multline*}
  \partial_{v_{n_1},v_{n_2}}^2 \theta^{\emptyset} =
  \big(\theta^{\{i_1,i_2,i_3,i_4\}} (\theta^{\{j_2,j_3,j_4,j_5\}} \theta^{\{j_1,j_3,j_4,j_5\}})^2\big)^{-1} 
 \times \\ \times \bigg({-}
 \frac{\theta^{\{i_3,i_4,j_1,j_2\}} \theta^{\{i_1,i_2,j_1,j_2\}}\theta^{\{i_1,j_3,j_4,j_5\}}\theta^{\{i_2,j_3,j_4,j_5\}}}
 {\theta^{\{i_2,i_4,j_1,j_2\}} \theta^{\{i_1,i_4,j_1,j_2\}}\theta^{\{i_2,i_3,j_1,j_2\}}\theta^{\{i_1,i_3,j_1,j_2\}}} \times \\ \times
 \theta^{\{i_1,i_2,i_4,j_1\}} \theta^{\{i_1,i_2,i_4,j_2\}} 
 \theta^{\{i_1,i_2,i_3,j_1\}} \theta^{\{i_1,i_2,i_3,j_2\}} \times \\ \times
  \big( \partial_{v_{n_1}} \theta^{\{i_2,i_3,i_4\}} \partial_{v_{n_2}} \theta^{\{i_1,i_3,i_4\}} 
  + \partial_{v_{n_2}} \theta^{\{i_2,i_3,i_4\}} \partial_{v_{n_1}} \theta^{\{i_1,i_3,i_4\}}\big) \\
  + \frac{\theta^{\{i_2,i_4,j_1,j_2\}} \theta^{\{i_1,i_3,j_1,j_2\}}\theta^{\{i_1,j_3,j_4,j_5\}}\theta^{\{i_3,j_3,j_4,j_5\}}}
 {\theta^{\{i_3,i_4,j_1,j_2\}} \theta^{\{i_1,i_4,j_1,j_2\}}\theta^{\{i_2,i_3,j_1,j_2\}}\theta^{\{i_1,i_2,j_1,j_2\}}} \times \\ \times
 \theta^{\{i_1,i_3,i_4,j_1\}} \theta^{\{i_1,i_3,i_4,j_2\}} 
 \theta^{\{i_1,i_2,i_3,j_1\}} \theta^{\{i_1,i_2,i_3,j_2\}} \times \\ \times
  \big( \partial_{v_{n_1}} \theta^{\{i_2,i_3,i_4\}} \partial_{v_{n_2}} \theta^{\{i_1,i_2,i_4\}} 
  + \partial_{v_{n_2}} \theta^{\{i_2,i_3,i_4\}} \partial_{v_{n_1}} \theta^{\{i_1,i_2,i_4\}}\big)\\
  - \frac{\theta^{\{i_2,i_3,j_1,j_2\}} \theta^{\{i_1,i_4,j_1,j_2\}}\theta^{\{i_1,j_3,j_4,j_5\}}\theta^{\{i_4,j_3,j_4,j_5\}}}
 {\theta^{\{i_3,i_4,j_1,j_2\}} \theta^{\{i_2,i_4,j_1,j_2\}}\theta^{\{i_1,i_2,j_1,j_2\}}\theta^{\{i_1,i_3,j_1,j_2\}}} \times \\ \times
 \theta^{\{i_1,i_2,i_4,j_1\}} \theta^{\{i_1,i_2,i_4,j_2\}} 
 \theta^{\{i_1,i_3,i_4,j_1\}} \theta^{\{i_1,i_3,i_4,j_2\}} \times \\ \times
  \big( \partial_{v_{n_1}} \theta^{\{i_2,i_3,i_4\}} \partial_{v_{n_2}} \theta^{\{i_1,i_2,i_3\}} 
  + \partial_{v_{n_2}} \theta^{\{i_2,i_3,i_4\}} \partial_{v_{n_1}} \theta^{\{i_1,i_2,i_3\}}\big)\\
  - \frac{\theta^{\{i_1,i_4,j_1,j_2\}} \theta^{\{i_2,i_3,j_1,j_2\}}\theta^{\{i_2,j_3,j_4,j_5\}}\theta^{\{i_3,j_3,j_4,j_5\}}}
 {\theta^{\{i_3,i_4,j_1,j_2\}} \theta^{\{i_2,i_4,j_1,j_2\}}\theta^{\{i_1,i_2,j_1,j_2\}}\theta^{\{i_1,i_3,j_1,j_2\}}} \times \\ \times
 \theta^{\{i_2,i_3,i_4,j_1\}} \theta^{\{i_2,i_3,i_4,j_2\}} 
 \theta^{\{i_1,i_2,i_3,j_1\}} \theta^{\{i_1,i_2,i_3,j_2\}} \times \\ \times
  \big( \partial_{v_{n_1}} \theta^{\{i_1,i_3,i_4\}} \partial_{v_{n_2}} \theta^{\{i_1,i_2,i_4\}} 
  + \partial_{v_{n_2}} \theta^{\{i_1,i_3,i_4\}} \partial_{v_{n_1}} \theta^{\{i_1,i_2,i_4\}}\big)\\
  + \frac{\theta^{\{i_1,i_3,j_1,j_2\}} \theta^{\{i_2,i_4,j_1,j_2\}}\theta^{\{i_2,j_3,j_4,j_5\}}\theta^{\{i_4,j_3,j_4,j_5\}}}
 {\theta^{\{i_3,i_4,j_1,j_2\}} \theta^{\{i_1,i_4,j_1,j_2\}}\theta^{\{i_1,i_2,j_1,j_2\}}\theta^{\{i_2,i_3,j_1,j_2\}}} \times \\ \times
 \theta^{\{i_1,i_2,i_4,j_1\}} \theta^{\{i_1,i_2,i_4,j_2\}} 
 \theta^{\{i_2,i_3,i_4,j_1\}} \theta^{\{i_2,i_3,i_4,j_2\}} \times \\ \times
  \big( \partial_{v_{n_1}} \theta^{\{i_1,i_3,i_4\}} \partial_{v_{n_2}} \theta^{\{i_1,i_2,i_3\}} 
  + \partial_{v_{n_2}} \theta^{\{i_1,i_3,i_4\}} \partial_{v_{n_1}} \theta^{\{i_1,i_2,i_3\}}\big)\\
  - \frac{\theta^{\{i_1,i_2,j_1,j_2\}} \theta^{\{i_3,i_4,j_1,j_2\}}\theta^{\{i_3,j_3,j_4,j_5\}}\theta^{\{i_4,j_3,j_4,j_5\}}}
 {\theta^{\{i_2,i_4,j_1,j_2\}} \theta^{\{i_1,i_4,j_1,j_2\}}\theta^{\{i_1,i_3,j_1,j_2\}}\theta^{\{i_2,i_3,j_1,j_2\}}} \times \\ \times
 \theta^{\{i_2,i_3,i_4,j_1\}} \theta^{\{i_2,i_3,i_4,j_2\}} 
 \theta^{\{i_1,i_3,i_4,j_1\}} \theta^{\{i_1,i_3,i_4,j_2\}} \times \\ \times
  \big( \partial_{v_{n_1}} \theta^{\{i_1,i_2,i_4\}} \partial_{v_{n_2}} \theta^{\{i_1,i_2,i_3\}} 
  + \partial_{v_{n_2}} \theta^{\{i_1,i_2,i_4\}} \partial_{v_{n_1}} \theta^{\{i_1,i_2,i_3\}}\big)\bigg).
\end{multline*}
For example with $\I_0=\{1,2,3,4\}$, $j_1=5$, $j_2=6$ one gets
\begin{multline*}
  \partial_{v_{n_1},v_{n_2}}^2 \theta^{\emptyset} =
  \big(\theta^{\{1,2,3,4\}} (\theta^{\{6,7,8,9\}} \theta^{\{5,7,8,9\}})^2\big)^{-1} 
 \times \\ \times \bigg({-}
 \frac{\theta^{\{3,4,5,6\}} \theta^{\{1,2,5,6\}}\theta^{\{1,7,8,9\}}\theta^{\{2,7,8,9\}}}
 {\theta^{\{2,4,5,6\}} \theta^{\{1,4,5,6\}}\theta^{\{2,3,5,6\}}\theta^{\{1,3,5,6\}}} 
 \theta^{\{1,2,4,5\}} \theta^{\{1,2,4,6\}} 
 \theta^{\{1,2,3,5\}} \theta^{\{1,2,3,6\}} \times \\ \times
  \big( \partial_{v_{n_1}} \theta^{\{2,3,4\}} \partial_{v_{n_2}} \theta^{\{1,3,4\}} 
  + \partial_{v_{n_2}} \theta^{\{2,3,4\}} \partial_{v_{n_1}} \theta^{\{1,3,4\}}\big) \\
  + \frac{\theta^{\{2,4,5,6\}} \theta^{\{1,3,5,6\}}\theta^{\{1,7,8,9\}}\theta^{\{3,7,8,9\}}}
 {\theta^{\{3,4,5,6\}} \theta^{\{1,4,5,6\}}\theta^{\{2,3,5,6\}}\theta^{\{1,2,5,6\}}}
 \theta^{\{1,3,4,5\}} \theta^{\{1,3,4,6\}}\theta^{\{1,2,3,5\}} \theta^{\{1,2,3,6\}} \times \\ \times
  \big( \partial_{v_{n_1}} \theta^{\{2,3,4\}} \partial_{v_{n_2}} \theta^{\{1,2,4\}} 
  + \partial_{v_{n_2}} \theta^{\{2,3,4\}} \partial_{v_{n_1}} \theta^{\{1,2,4\}}\big)\\
  - \frac{\theta^{\{2,3,5,6\}} \theta^{\{1,4,5,6\}}\theta^{\{1,7,8,9\}}\theta^{\{4,7,8,9\}}}
 {\theta^{\{3,4,5,6\}} \theta^{\{2,4,5,6\}}\theta^{\{1,2,5,6\}}\theta^{\{1,3,5,6\}}} 
 \theta^{\{1,2,4,5\}} \theta^{\{1,2,4,6\}} \theta^{\{1,3,4,5\}} \theta^{\{1,3,4,6\}} \times \\ \times
  \big( \partial_{v_{n_1}} \theta^{\{2,3,4\}} \partial_{v_{n_2}} \theta^{\{1,2,3\}} 
  + \partial_{v_{n_2}} \theta^{\{2,3,4\}} \partial_{v_{n_1}} \theta^{\{1,2,3\}}\big)\\
  - \frac{\theta^{\{1,4,5,6\}} \theta^{\{2,3,5,6\}}\theta^{\{2,7,8,9\}}\theta^{\{3,7,8,9\}}}
 {\theta^{\{3,4,5,6\}} \theta^{\{2,4,5,6\}}\theta^{\{1,2,5,6\}}\theta^{\{1,3,5,6\}}} 
 \theta^{\{2,3,4,5\}} \theta^{\{2,3,4,6\}} \theta^{\{1,2,3,5\}} \theta^{\{1,2,3,6\}} \times \\ \times
  \big( \partial_{v_{n_1}} \theta^{\{1,3,4\}} \partial_{v_{n_2}} \theta^{\{1,2,4\}} 
  + \partial_{v_{n_2}} \theta^{\{1,3,4\}} \partial_{v_{n_1}} \theta^{\{1,2,4\}}\big)\\
  + \frac{\theta^{\{1,3,5,6\}} \theta^{\{2,4,5,6\}}\theta^{\{2,7,8,9\}}\theta^{\{4,7,8,9\}}}
 {\theta^{\{3,4,5,6\}} \theta^{\{1,4,5,6\}}\theta^{\{1,2,5,6\}}\theta^{\{2,3,5,6\}}} 
 \theta^{\{1,2,4,5\}} \theta^{\{1,2,4,6\}} \theta^{\{2,3,4,5\}} \theta^{\{2,3,4,6\}} \times \\ \times
  \big( \partial_{v_{n_1}} \theta^{\{1,3,4\}} \partial_{v_{n_2}} \theta^{\{1,2,3\}} 
  + \partial_{v_{n_2}} \theta^{\{1,3,4\}} \partial_{v_{n_1}} \theta^{\{1,2,3\}}\big)\\
  - \frac{\theta^{\{1,2,5,6\}} \theta^{\{3,4,5,6\}}\theta^{\{3,7,8,9\}}\theta^{\{4,7,8,9\}}}
 {\theta^{\{2,4,5,6\}} \theta^{\{1,4,5,6\}}\theta^{\{1,3,5,6\}}\theta^{\{2,3,5,6\}}} 
 \theta^{\{2,3,4,5\}} \theta^{\{2,3,4,6\}} \theta^{\{1,3,4,5\}} \theta^{\{1,3,4,6\}} \times \\ \times
  \big( \partial_{v_{n_1}} \theta^{\{1,2,4\}} \partial_{v_{n_2}} \theta^{\{1,2,3\}} 
  + \partial_{v_{n_2}} \theta^{\{1,2,4\}} \partial_{v_{n_1}} \theta^{\{1,2,3\}}\big)\bigg)
\end{multline*}
or with characteristics in the standard form
\begin{multline*}
 \partial_{v_{n_1},v_{n_2}}^2 \theta[{}^{1111}_{0101}] = 
 \big(\theta[{}^{1111}_{1001}] \big(\theta[{}^{1101}_{0101}] \theta[{}^{1101}_{0111}]\big)^2\big)^{-1} \times \\ \times
 \Big(- \frac{\theta[{}^{1111}_{0011}]\theta[{}^{1111}_{1111}]\theta[{}^{0111}_{1011}]\theta[{}^{0111}_{0011}]}
{\theta[{}^{0011}_{0011}]\theta[{}^{0011}_{1011}]\theta[{}^{0011}_{0111}]\theta[{}^{0011}_{1111}] }
\theta[{}^{1001}_{1101}]\theta[{}^{1001}_{1111}]\theta[{}^{1001}_{1001}]\theta[{}^{1001}_{1011}] \times \\ \times
 \big( \partial_{v_{n_1}} \theta[{}^{0111}_{1001}] \partial_{v_{n_2}} \theta[{}^{0111}_{0001}] 
 + \partial_{v_{n_2}} \theta[{}^{0111}_{1001}] \partial_{v_{n_1}} \theta[{}^{0111}_{0001}]\big)\\
+ \frac{\theta[{}^{0011}_{0011}]\theta[{}^{0011}_{1111}]\theta[{}^{0111}_{1011}]\theta[{}^{1011}_{0011}]}
{\theta[{}^{1111}_{0011}]\theta[{}^{0011}_{1011}]\theta[{}^{0011}_{0111}]\theta[{}^{1111}_{1111}] }
\theta[{}^{0101}_{1101}]\theta[{}^{0101}_{1111}]\theta[{}^{1001}_{1001}]\theta[{}^{1001}_{1011}]  \times \\ \times
 \big( \partial_{v_{n_1}} \theta[{}^{0111}_{1001}] \partial_{v_{n_2}} \theta[{}^{1011}_{0001}] 
 + \partial_{v_{n_2}} \theta[{}^{0111}_{1001}] \partial_{v_{n_1}} \theta[{}^{1011}_{0001}]\big)\\
 - \frac{\theta[{}^{0011}_{0111}]\theta[{}^{0011}_{1011}]\theta[{}^{0111}_{1011}]\theta[{}^{1011}_{0111}]}
{\theta[{}^{1111}_{0011}]\theta[{}^{0011}_{0011}]\theta[{}^{1111}_{1111}]\theta[{}^{0011}_{1111}] }
\theta[{}^{1001}_{1101}]\theta[{}^{1001}_{1111}]\theta[{}^{0101}_{1101}]\theta[{}^{0101}_{1111}] \times \\ \times
  \big( \partial_{v_{n_1}} \theta[{}^{0111}_{1001}] \partial_{v_{n_2}} \theta[{}^{1011}_{0101}] 
 + \partial_{v_{n_2}} \theta[{}^{0111}_{1001}] \partial_{v_{n_1}} \theta[{}^{1011}_{0101}]\big)\\
 - \frac{\theta[{}^{0011}_{1011}]\theta[{}^{0011}_{0111}]\theta[{}^{0111}_{0011}]\theta[{}^{1011}_{0011}]}
{\theta[{}^{1111}_{0011}]\theta[{}^{0011}_{0011}]\theta[{}^{1111}_{1111}]\theta[{}^{0011}_{1111}] }
\theta[{}^{0101}_{0101}]\theta[{}^{0101}_{0111}]\theta[{}^{1001}_{1001}]\theta[{}^{1001}_{1011}]  \times \\ \times
  \big( \partial_{v_{n_1}} \theta[{}^{0111}_{0001}] \partial_{v_{n_2}} \theta[{}^{1011}_{0001}] 
 + \partial_{v_{n_2}} \theta[{}^{0111}_{0001}] \partial_{v_{n_1}} \theta[{}^{1011}_{0001}]\big)\\
+ \frac{\theta[{}^{0011}_{1111}]\theta[{}^{0011}_{0011}]\theta[{}^{0111}_{0011}]\theta[{}^{1011}_{0111}]}
{\theta[{}^{1111}_{0011}]\theta[{}^{0011}_{1011}]\theta[{}^{1111}_{1111}]\theta[{}^{0011}_{0111}] }
\theta[{}^{1001}_{1101}]\theta[{}^{1001}_{1111}]\theta[{}^{0101}_{0101}]\theta[{}^{0101}_{0111}]  \times \\ \times
  \big( \partial_{v_{n_1}} \theta[{}^{0111}_{0001}] \partial_{v_{n_2}} \theta[{}^{1011}_{0101}] 
 + \partial_{v_{n_2}} \theta[{}^{0111}_{0001}] \partial_{v_{n_1}} \theta[{}^{1011}_{0101}]\big) \\
- \frac{\theta[{}^{1111}_{1111}]\theta[{}^{111}_{0011}]\theta[{}^{1011}_{0011}]\theta[{}^{1011}_{0111}]}
{\theta[{}^{0011}_{1011}]\theta[{}^{0011}_{0011}]\theta[{}^{0011}_{1111}]\theta[{}^{0011}_{0111}] }
\theta[{}^{0101}_{1101}]\theta[{}^{0101}_{1111}]\theta[{}^{0101}_{0101}]\theta[{}^{0101}_{0111}]  \times \\ \times
  \big( \partial_{v_{n_1}} \theta[{}^{1011}_{0001}] \partial_{v_{n_2}} \theta[{}^{1011}_{0101}] 
 + \partial_{v_{n_2}} \theta[{}^{1011}_{0001}] \partial_{v_{n_1}} \theta[{}^{1011}_{0101}]\big)\Big).
\end{multline*}

\section{Examples of the Schottky relations}\label{A:Schottky}
Here we analyse some examples of the Schottky invariants presented in the literature,
and propose some new examples derived on the base of ideas from Remark~\ref{r:rSchottky}.

\subsection{Examples from papers of Farkas \& Rauch}\label{A:ShottkyComp}
In genus $4$ hyperelliptic case the Schottky relation \eqref{SchottkyEqlt} is given by
formula ($\mathrm{R}_2$) \cite[p.\,685]{FR1969Schtt}, the same in \cite[p.\,459]{FR1970Schtt}.  
In our notation and choice of homology basis it gets the form
\begin{multline}\label{FarSchtt69}
 \big(\theta^{\{2,4,6,8\}} \theta^{\{1,4,6,8\}} \theta^{\{2,5,7,9\}} \theta^{\{1,5,7,9\}}
 \theta^{\{2,4,7,9\}}\theta^{\{1,4,7,9\}}\theta^{\{2,5,6,8\}}\theta^{\{1,5,6,8\}}\big)^{1/2}\\
 - \big(\theta^{\{2,4,6,9\}} \theta^{\{1,4,6,9\}} \theta^{\{2,5,7,8\}} \theta^{\{1,5,7,8\}}
 \theta^{\{2,4,7,8\}}\theta^{\{1,4,7,8\}}\theta^{\{2,5,6,9\}}\theta^{\{1,5,6,9\}}\big)^{1/2}\\
 - \big(\theta^{\{2,4,6,7\}} \theta^{\{1,4,6,7\}} \theta^{\{2,5,8,9\}} \theta^{\{1,5,8,9\}}
 \theta^{\{2,4,8,9\}}\theta^{\{1,4,8,9\}}\theta^{\{2,5,6,7\}}\theta^{\{1,5,6,7\}}\big)^{1/2} = 0.
\end{multline}
The signs are chosen according to the order of sets of indices.
The initial syzygetic group $(P)$ of rank $3$ is generated by $3$ elements
\begin{align*}
 &P_1= [\varepsilon_1]+[\varepsilon_2]& &\{1,2\}& \\
 &P_2 = [\varepsilon_1]+[\varepsilon_2]+[\varepsilon_3]&
 &\{1,2,3\}&\\
 &P_3 = [\varepsilon_1]+[\varepsilon_2]+[\varepsilon_3]+[\varepsilon_4]+[\varepsilon_5]&
 &\{1,2,3,4,5\},&
\end{align*}
given with the corresponding sets of indices. One can add sets by the union operation,
taking into account that an index occurring twice drops (two equal elements cancel each other out). 
Recall that characteristic $[\I]$ of theta constant
corresponds to the set $\I+\R$, where $\R=\{2,4,6,8\}$ is associated with  the vector of Riemann constants. 
For example, the set of indices in characteristic $[\{1,4,6,8\}]$ is obtained as $\{1,2\} + \{2,4,6,8\}$. 
In such notation it becomes obvious that all theta constants in \eqref{FarSchtt69} 
have non-singular even characteristics,
because all sets corresponding to characteristics contain $4=g$ indices.
Thus, characteristics of theta constants in the first summand of \eqref{FarSchtt69} correspond to 
the following elements of group $(P)$ (in the same order)
\begin{gather*}
 P_0=0,\quad P_1,\quad P_2, \quad P_1 P_2, \quad P_3, 
 \quad P_1 P_3, \quad P_2 P_3, \quad P_1 P_2 P_3.
\end{gather*}
Here the notation of \cite[ch.\,XVII]{bak897} is adopted, that is $P_1P_2$ 
denotes the sum of characteristics $P_1$ and $P_2$. 
Three cosets, which give rise to three products of theta constants in the Schottky invariant, 
are produced by the following azygetic triplet of characteristics
\begin{align*}
 &A_1= 0 = [\{2,4,6,8\}] && \{\} \\
 &A_2= [\varepsilon_8]+[\varepsilon_9] = [\{2,4,6,9\}] && \{8,9\} \\
 &A_3= [\varepsilon_7]+[\varepsilon_8] = [\{2,4,6,7\}] && \{7,8\}.
\end{align*}
A concise proof of \eqref{FarSchtt69} follows from FTT Corollary~\ref{C:eklm}:
\begin{multline*}
 \bigg(\frac{\theta^{\{2,4,6,8\}} \theta^{\{1,3,5,8\}} \theta^{\{1,3,5,7\}} \theta^{\{2,4,7,9\}}
  \theta^{\{1,4,7,9\}} \theta^{\{2,3,5,7\}} \theta^{\{2,3,5,8\}} \theta^{\{1,4,6,8\}} }
 {\theta^{\{2,4,6,7\}} \theta^{\{1,3,5,7\}} \theta^{\{1,3,5,8\}}  \theta^{\{2,4,8,9\}}
 \theta^{\{1,4,8,9\}} \theta^{\{2,3,5,8\}} \theta^{\{2,3,5,7\}}  \theta^{\{1,4,6,7\}} }
 \times \\ \times 
 \frac{\theta^{\{2,5,6,8\}} \theta^{\{1,3,4,8\}}\theta^{\{1,3,4,7\}} \theta^{\{2,5,7,9\}} 
 \theta^{\{1,5,7,9\}} \theta^{\{2,3,4,7\}} \theta^{\{2,3,4,8\}} \theta^{\{1,5,6,8\}} }
 {\theta^{\{2,5,6,7\}} \theta^{\{1,3,4,7\}}\theta^{\{1,3,4,8\}} \theta^{\{2,5,8,9\}} 
 \theta^{\{1,5,8,9\}} \theta^{\{2,3,4,8\}} \theta^{\{2,3,4,7\}} \theta^{\{1,5,6,7\}} } \bigg)^{1/2}\\
 - \bigg(\frac{\theta^{\{2,4,6,9\}} \theta^{\{1,3,5,9\}} \theta^{\{1,3,5,7\}} \theta^{\{2,4,7,8\}}
 \theta^{\{1,4,7,8\}} \theta^{\{2,3,5,7\}} \theta^{\{2,3,5,9\}} \theta^{\{1,4,6,9\}} }
 {\theta^{\{2,4,6,7\}} \theta^{\{1,3,5,7\}} \theta^{\{1,3,5,9\}} \theta^{\{2,4,8,9\}}
 \theta^{\{1,4,8,9\}} \theta^{\{2,3,5,9\}} \theta^{\{2,3,5,7\}} \theta^{\{1,4,6,7\}} } \times \\ 
 \times \frac{\theta^{\{2,5,6,9\}} \theta^{\{1,3,4,9\}} \theta^{\{1,3,4,7\}} \theta^{\{2,5,7,8\}}
 \theta^{\{1,5,7,8\}} \theta^{\{2,3,4,7\}} \theta^{\{2,3,4,9\}} \theta^{\{1,5,6,9\}} }
 {\theta^{\{2,5,6,7\}}\theta^{\{1,3,4,7\}} \theta^{\{1,3,4,9\}} \theta^{\{2,5,8,9\}}
 \theta^{\{1,5,8,9\}} \theta^{\{2,3,4,9\}}\theta^{\{2,3,4,7\}} \theta^{\{1,5,6,7\}} } \bigg)^{1/2} \\
 - 1 = \frac{(e_9-e_7)(e_8-e_6)}{(e_9-e_8)(e_7-e_6)} - \frac{(e_8-e_7)(e_9-e_6)}{(e_9-e_8)(e_7-e_6)} - 1 = 0.
\end{multline*}

In the context of Remark~\ref{r:rSchottky} the group $(P)$ 
is generated by three even pairwise syzygetic characteristics: $[\{2,4,7,9\}]=[\mathcal{A}(\{6,7,8,9\})]$,
$[\{1,4,6,8\}]=[\mathcal{A}(\{1,2\})]$, $[\{2,5,6,8\}]=[\mathcal{A}(\{4,5\})]$. In this case
$\I_0 = \{6,7,8,9\}$ and three cosets are produced by the even azygetic triplet:
$A_1 = [\I_0^{(7,9 \to 2,4)}]$, $A_2 = [\I_0^{(7,8 \to 2,4)}]$, 
$A_3 = [\I_0^{(8,9 \to 2,4)}]$.

At the same time, ratios from  \cite[Theorem~1, p.\,680]{FR1969Schtt} 
connect theta constants in genera $g-1$ and $g$. 
In the context of the above example one gets the following equalities of ratios
in genera $3$ and $4$
\begin{multline}\label{Ratio34Rels}
 \frac{\big(\theta^{\{2,4,6\}}\big)^2}{\theta^{\{2,4,6,8\}}\theta^{\{1,4,6,8\}}}
 = \frac{\big(\theta^{\{3,5,7\}}\big)^2}{\theta^{\{2,5,7,9\}}\theta^{\{1,5,7,9\}}}
 = \frac{\big(\theta^{\{2,5,7\}}\big)^2}{\theta^{\{2,4,7,9\}}\theta^{\{1,4,7,9\}}} \\
 = \frac{\big(\theta^{\{3,4,6\}}\big)^2}{\theta^{\{2,5,6,8\}}\theta^{\{1,5,6,8\}}}
 = \frac{\big(\theta^{\{2,4,7\}}\big)^2}{\theta^{\{2,4,6,9\}}\theta^{\{1,4,6,9\}}}
 = \frac{\big(\theta^{\{3,5,6\}}\big)^2}{\theta^{\{2,5,7,8\}}\theta^{\{1,5,7,8\}}}
 = \dots
\end{multline}
And similar relations between theta constants in genera $4$ and $5$
\begin{multline}\label{Ratio45Rels}
 \frac{\big(\theta^{\{2,4,6,8\}}\big)^2}{\theta^{\{2,4,6,8,10\}}\theta^{\{1,4,6,8,10\}}}
 = \frac{\big(\theta^{\{1,4,6,8\}}\big)^2}{\theta^{\{2,3,6,8,10\}}\theta^{\{1,3,6,8,10\}}} \\
 = \frac{\big(\theta^{\{2,5,7,9\}}\big)^2}{\theta^{\{2,4,7,9,11\}}\theta^{\{1,4,7,9,11\}}} 
 = \frac{\big(\theta^{\{1,5,7,9\}}\big)^2}{\theta^{\{2,3,7,9,11\}}\theta^{\{1,3,7,9,11\}}} \\
 = \frac{\big(\theta^{\{2,4,7,9\}}\big)^2}{\theta^{\{2,4,6,9,11\}}\theta^{\{1,4,6,9,11\}}}
 = \frac{\big(\theta^{\{1,4,7,9\}}\big)^2}{\theta^{\{2,3,6,9,11\}}\theta^{\{1,3,6,9,11\}}}
 = \dots
\end{multline}

As stated in \cite[p.\,685]{FR1969Schtt}, a genus $3$ curve is obtained from a  genus $4$ curve with $9$
branch points by dropping the first two branch points, that is the genus $3$ curve has branch points at
$e_3$, $e_4$, $e_5$, $e_6$, $e_7$, $e_8$, $e_9$ labeled by indices from $1$ to $7$.
Then the following relation
\begin{multline}\label{FarSchtt69G3}
 \theta^{\{2,4,6\}} \theta^{\{3,5,7\}} \theta^{\{2,5,7\}} \theta^{\{3,4,6\}}\\
 - \theta^{\{2,4,7\}} \theta^{\{3,5,6\}} \theta^{\{2,5,6\}} \theta^{\{3,4,7\}}
 - \theta^{\{2,4,5\}} \theta^{\{3,6,7\}} \theta^{\{2,6,7\}} \theta^{\{3,4,5\}} = 0,
\end{multline}
is derived from  \eqref{FarSchtt69} with the help of \eqref{Ratio34Rels}.
Alternatively, relation \eqref{FarSchtt69G3} can be proven
using FTT Corollary~\ref{C:eklm}.

Also the following extension of the Schottky relation \eqref{FarSchtt69} to genus $5$ 
is proposed in \cite{FR1969Schtt,FR1970Schtt}
\begin{multline}\label{FarSchtt70}
 \big(\theta^{\{2,4,6,8,10\}} \theta^{\{1,4,6,8,10\}} \theta^{\{2,5,7,9,11\}}\theta^{\{1,5,7,9,11\}} 
 \times \\ \times
 \theta^{\{2,4,7,9,11\}}\theta^{\{1,4,7,9,11\}} \theta^{\{2,5,6,8,10\}} \theta^{\{1,5,6,8,10\}}\big)^{1/2}\\
 -  \big(\theta^{\{2,4,6,8,11\}} \theta^{\{1,4,6,8,11\}} \theta^{\{2,5,7,9,10\}}\theta^{\{1,5,7,9,10\}} 
 \times \\ \times
 \theta^{\{2,4,7,9,10\}}\theta^{\{1,4,7,9,10\}} \theta^{\{2,5,6,8,11\}} \theta^{\{1,5,6,8,11\}}\big)^{1/2}\\
 - \big(\theta^{\{2,4,6,8,9\}} \theta^{\{1,4,6,8,9\}} \theta^{\{2,5,7,10,11\}}\theta^{\{1,5,7,10,11\}} 
 \times \\ \times
 \theta^{\{2,4,7,10,11\}}\theta^{\{1,4,7,10,11\}} \theta^{\{2,5,6,8,9\}} \theta^{\{1,5,6,8,9\}}\big)^{1/2}\\
 - \big(\theta^{\{2,4,6,7,8\}} \theta^{\{1,4,6,7,8\}} \theta^{\{2,5,9,10,11\}}\theta^{\{1,5,7,9,10,11\}} 
 \times \\ \times
 \theta^{\{2,4,7,9,10,11\}}\theta^{\{1,4,7,9,10,11\}} \theta^{\{2,5,6,7,8\}} \theta^{\{1,5,6,7,8\}}\big)^{1/2}= 0,
\end{multline}
which has the form
\begin{gather*}
 \sqrt{r_1} - \sqrt{r_2} - \sqrt{r_3} - \sqrt{r_4} = 0.
\end{gather*}
Here the group $(P)$ of rank $3$ is generated by the same $3$ elements
\begin{align*}
 &P_1= [\varepsilon_1]+[\varepsilon_2]& &\{1,2\}& \\
 &P_2 = [\varepsilon_1]+[\varepsilon_2]+[\varepsilon_3]&
 &\{1,2,3\}&\\
 &P_3 = [\varepsilon_1]+[\varepsilon_2]+[\varepsilon_3]+[\varepsilon_4]+[\varepsilon_5]&
 &\{1,2,3,4,5\},&
\end{align*}
and four products $\{r_1$, $r_2$, $r_3$, $r_4\}$ are constructed with the help of four characteristics azygetic in triplets,
namely:
\begin{align*}
 &A_1= 0 = [\{2,4,6,8,10\}] && \{\} \\
 &A_2= [\varepsilon_{10}]+[\varepsilon_{11}] = [\{2,4,6,8,11\}] && \{10,11\} \\
 &A_3= [\varepsilon_9]+[\varepsilon_{10}] = [\{2,4,6,8,9\}] && \{9,10\} \\
 &A_4= [\varepsilon_7]+[\varepsilon_{10}] = [\{2,4,6,7,8\}] && \{7,10\}.
\end{align*}
The way of producing \eqref{FarSchtt70} is explained in \cite[p.\,457--458]{FR1970Schtt}.

\subsection{Schottky relations in genus $5$}\label{A:SchottkyRel5}
With the help of results proposed in the present paper 
the standard Schottky relation can be obtained in an arbitrary genus.
In particular, in genus $5$ let $\I_0 = \{6,8,9,10,11\}$, $\K=\{6,9,10,11\}$ and $A_1$, $A_2$, $A_3$ be as above, that is
\begin{align*}
 &A_1 = [\I_0^{(9,11\to 2,4)}] = [\{2,4,6,8,10\}]& &\{\}&\\
 &A_2 = [\I_0^{(9,10\to 2,4)}] = [\{2,4,6,8,11\}]& &\{10,11\}&\\
 &A_3 = [\I_0^{(10,11\to 2,4)}] = [\{2,4,6,8,9\}]& &\{9,10\}.&
\end{align*}
Then relation \eqref{SchottkyR} follows from the Schottky relation
\begin{gather}\label{SchRelG5}
 \sqrt{r_1} - \sqrt{r_2} - \sqrt{r_3} = 0,
\end{gather}
which holds with the following products, derived from the group $(P)$ of rank $1$ generated by element $\K$,
\begin{align*}
 & r_1 = \big(\theta[\I_0^{(9,11\to 2,4)}]\theta[\I_0^{(6,10\to 2,4)}]\big)^4
 = \big(\theta^{\{2,4,6,8,10\}}\theta^{\{2,4,8,9,11\}}\big)^4,\\
 & r_2 = \big(\theta[\I_0^{(9,10\to 2,4)}]\theta[\I_0^{(6,11\to 2,4)}]\big)^4
 = \big(\theta^{\{2,4,6,8,11\}}\theta^{\{2,4,8,9,10\}}\big)^4,\\
 & r_3 = \big(\theta[\I_0^{(10,11\to 2,4)}]\theta[\I_0^{(6,9\to 2,4)}]\big)^4
 = \big(\theta^{\{2,4,6,8,9\}}\theta^{\{2,4,8,10,11\}}\big)^4.
\end{align*}
On the other hand, one can construct a group $(P)$ of rank $3$ generated by elements $\K$, $\{1,2\}$, $\{4,5\}$, 
that results into
\begin{align*}
 r_1 &= \theta^{\{2,4,6,8,10\}}\theta^{\{1,4,6,8,10\}}\theta^{\{2,5,6,8,10\}}\theta^{\{1,5,6,8,10\}} 
 \times \\ &\quad \times \theta^{\{2,4,8,9,11\}}\theta^{\{1,4,8,9,11\}}\theta^{\{2,4,8,9,11\}}\theta^{\{1,5,8,9,11\}},\\
 r_2 &= \theta^{\{2,4,6,8,11\}}\theta^{\{1,4,6,8,11\}} \theta^{\{2,5,6,8,11\}} \theta^{\{1,5,6,8,11\}} 
 \times \\ &\quad \times \theta^{\{2,4,8,9,10\}}\theta^{\{1,4,8,9,10\}}\theta^{\{2,5,8,9,10\}}\theta^{\{1,5,8,9,10\}},\\
 r_3 &= \theta^{\{2,4,6,8,9\}}\theta^{\{1,4,6,8,9\}}\theta^{\{2,5,6,8,9\}}\theta^{\{1,5,6,8,9\}}
 \times \\ &\quad \times \theta^{\{2,4,8,10,11\}}\theta^{\{1,4,8,10,11\}}\theta^{\{2,5,8,10,11\}}\theta^{\{1,5,8,10,11\}},
\end{align*}
or with characteristics in the standard form
\begin{align*}
 r_1 &= \theta[{}^{00000}_{00000}]\theta[{}^{00000}_{10000}]\theta[{}^{01100}_{00000}]\theta[{}^{01100}_{10000}] 
 \theta[{}^{00100}_{00010}]\theta[{}^{00100}_{10010}]\theta[{}^{01000}_{00010}]\theta[{}^{01000}_{10010}],\\
 r_2 &= \theta[{}^{00001}_{00000}]\theta[{}^{00001}_{10000}]\theta[{}^{01101}_{00000}]\theta[{}^{01101}_{10000}] 
 \theta[{}^{00101}_{00010}]\theta[{}^{00101}_{10010}]\theta[{}^{01001}_{00010}]\theta[{}^{01001}_{10010}],\\
 r_3 &= \theta[{}^{00000}_{00001}]\theta[{}^{00000}_{10001}]\theta[{}^{01100}_{00001}]\theta[{}^{01100}_{10001}] 
 \theta[{}^{00100}_{00011}]\theta[{}^{00100}_{10011}]\theta[{}^{01000}_{00011}]\theta[{}^{01000}_{10011}].
\end{align*}
The latter products $\{r_1,r_2,r_3\}$ also satisfy \eqref{SchRelG5}.

On the base of Remark~\ref{r:rSchottky} one can produce a group $(P)$ containing odd and even characteristics
which are syzygeric in pairs. Let characteristics $A_1$, $A_2$, $A_3$ be constructed by \eqref{Achars}, 
then all characteristics in $(A_1 P)$, $(A_2 P)$, $(A_3 P)$ are even non-singular. 
Then the Schottky relation \eqref{SchRelG5} holds.

For example, with the same $\I_0 = \{6,8,9,10,11\}$ and $\K=\{6,9,10,11\}$ as above, let $j_m=3$, $j_m=5$ and
group $(P)$ be generated by elements
\begin{align*}
 &P_1= [\varepsilon_1]+[\varepsilon_3]& &\{1,3\}& \\
 &P_2 = [\varepsilon_4]+[\varepsilon_5]& &\{4,5\}&\\
 &P_3 = [\varepsilon_6]+[\varepsilon_9]+[\varepsilon_{10}]+[\varepsilon_{11}]&
 &\{6,9,10,11\}=\K&
\end{align*}
With an azygetic triplet of characteristics 
\begin{align*}
 &A_1 = [\I_0^{(9,11\to 3,5)}] = [\{3,5,6,8,10\}]& &\{2,3,4,5\}&\\
 &A_2 = [\I_0^{(9,10\to 3,5)}] = [\{3,5,6,8,11\}]& &\{2,3,4,5,10,11\}&\\
 &A_3 = [\I_0^{(10,11\to 3,5)}] = [\{3,5,6,8,9\}]& &\{2,3,4,5,9,10\}&
\end{align*}
one obtains the following products of theta constants
\begin{align*}
 r_1 &= \theta^{\{3,5,6,8,10\}}\theta^{\{1,5,6,8,10\}}\theta^{\{3,4,6,8,10\}}\theta^{\{1,4,6,8,10\}} 
 \times \\ &\quad \times \theta^{\{3,5,8,9,11\}}\theta^{\{1,5,8,9,11\}}\theta^{\{3,4,8,9,11\}}\theta^{\{1,4,8,9,11\}},\\
 r_2 &= \theta^{\{3,5,6,8,11\}}\theta^{\{1,5,6,8,11\}} \theta^{\{3,4,6,8,11\}} \theta^{\{1,4,6,8,11\}} 
 \times \\ &\quad \times \theta^{\{3,5,8,9,10\}}\theta^{\{1,5,8,9,10\}}\theta^{\{3,4,8,9,10\}}\theta^{\{1,4,8,9,10\}},\\
 r_3 &= \theta^{\{3,5,6,8,9\}}\theta^{\{1,5,6,8,9\}}\theta^{\{3,4,6,8,9\}}\theta^{\{1,4,6,8,9\}}
 \times \\ &\quad \times \theta^{\{3,5,8,10,11\}}\theta^{\{1,5,8,10,11\}}\theta^{\{3,4,8,10,11\}}\theta^{\{1,4,8,10,11\}},
\end{align*}
or with characteristics in the standard form
\begin{align*}
 r_1 &= \theta[{}^{10100}_{00000}]\theta[{}^{01100}_{10000}]\theta[{}^{11000}_{00000}]\theta[{}^{00000}_{10000}] 
 \theta[{}^{10000}_{00010}]\theta[{}^{01000}_{10010}]\theta[{}^{11100}_{00010}]\theta[{}^{00100}_{10010}],\\
 r_2 &= \theta[{}^{10101}_{00000}]\theta[{}^{01101}_{10000}]\theta[{}^{11001}_{00000}]\theta[{}^{00001}_{10000}] 
 \theta[{}^{10001}_{00010}]\theta[{}^{01001}_{10010}]\theta[{}^{11101}_{00010}]\theta[{}^{00101}_{10010}],\\
 r_3 &= \theta[{}^{10100}_{00001}]\theta[{}^{01100}_{10001}]\theta[{}^{11000}_{00001}]\theta[{}^{00000}_{10001}] 
 \theta[{}^{10000}_{00011}]\theta[{}^{01000}_{10011}]\theta[{}^{11100}_{00011}]\theta[{}^{00100}_{10011}].
\end{align*}
This collection of products $\{r_1,r_2,r_3\}$ satisfies \eqref{SchRelG5}.

\subsection{Schottky relations from group of rank $4$}\label{A:SchottkyRank4}
From \eqref{FarSchtt69} with the help of \eqref{Ratio45Rels} one can derive a new version of the
Schottky relation on the base of a group $(P)$ of rank $4$, namely:
\begin{align*}
 r_1 &= \big(\theta^{\{2,4,6,8,10\}}\theta^{\{1,4,6,8,10\}}\theta^{\{2,3,6,8,10\}}\theta^{\{1,3,6,8,10\}} 
 \times \\ &\quad \times \theta^{\{2,4,7,9,11\}}\theta^{\{1,4,7,9,11\}}\theta^{\{2,3,7,9,11\}}\theta^{\{1,3,7,9,11\}}
 \times \\ &\quad \times \theta^{\{2,4,6,9,11\}}\theta^{\{1,4,6,9,11\}}\theta^{\{2,3,6,9,11\}}\theta^{\{1,3,6,9,11\}}
 \times \\ &\quad \times \theta^{\{2,4,7,8,10\}}\theta^{\{1,4,7,8,10\}}\theta^{\{2,3,7,8,10\}}\theta^{\{1,3,7,8,10\}}\big)^{1/2},\\
 r_2 &= \big(\theta^{\{2,4,6,8,11\}}\theta^{\{1,4,6,8,11\}}\theta^{\{2,3,6,8,11\}}\theta^{\{1,3,6,8,11\}} 
 \times \\ &\quad \times \theta^{\{2,4,7,9,10\}}\theta^{\{1,4,7,9,10\}}\theta^{\{2,3,7,9,10\}}\theta^{\{1,3,7,9,10\}}
 \times \\ &\quad \times \theta^{\{2,4,6,9,10\}}\theta^{\{1,4,6,9,10\}}\theta^{\{2,3,6,9,10\}}\theta^{\{1,3,6,9,10\}}
 \times \\ &\quad \times \theta^{\{2,4,7,8,11\}}\theta^{\{1,4,7,8,11\}}\theta^{\{2,3,7,8,11\}}\theta^{\{1,3,7,8,11\}}\big)^{1/2},\\
 r_3 &= \big(\theta^{\{2,4,6,8,9\}}\theta^{\{1,4,6,8,9\}}\theta^{\{2,3,6,8,9\}}\theta^{\{1,3,6,8,9\}} 
 \times \\ &\quad \times \theta^{\{2,4,7,10,11\}}\theta^{\{1,4,7,10,11\}}\theta^{\{2,3,7,10,11\}}\theta^{\{1,3,7,10,11\}}
 \times \\ &\quad \times \theta^{\{2,4,6,10,11\}}\theta^{\{1,4,6,10,11\}}\theta^{\{2,3,6,10,11\}}\theta^{\{1,3,6,10,11\}}
 \times \\ &\quad \times \theta^{\{2,4,7,8,9\}}\theta^{\{1,4,7,8,9\}}\theta^{\{2,3,7,8,9\}}\theta^{\{1,3,7,8,9\}}\big)^{1/2},
\end{align*}
or in the standard form 
\begin{align*}
 r_1 &= \big(\theta[{}^{00000}_{00000}]\theta[{}^{00000}_{10000}]\theta[{}^{00000}_{01000}]\theta[{}^{00000}_{11000}] 
 \theta[{}^{00110}_{00000}]\theta[{}^{00110}_{10000}]\theta[{}^{00110}_{01000}]\theta[{}^{00110}_{11000}] \times \\ &\quad \times
 \theta[{}^{00010}_{00000}]\theta[{}^{00010}_{10000}]\theta[{}^{00010}_{01000}]\theta[{}^{00010}_{11000}] 
 \theta[{}^{00100}_{00000}]\theta[{}^{00100}_{10000}]\theta[{}^{00100}_{01000}]\theta[{}^{00100}_{11000}]\big)^{1/2},\\
 r_2 &= \big(\theta[{}^{00001}_{00000}]\theta[{}^{00001}_{10000}]\theta[{}^{00001}_{01000}]\theta[{}^{00001}_{11000}] 
 \theta[{}^{00111}_{00000}]\theta[{}^{00111}_{10000}]\theta[{}^{00111}_{01000}]\theta[{}^{00111}_{11000}] \times \\ &\quad \times
 \theta[{}^{00011}_{00000}]\theta[{}^{00011}_{10000}]\theta[{}^{00011}_{01000}]\theta[{}^{00011}_{11000}] 
 \theta[{}^{00101}_{00000}]\theta[{}^{00101}_{10000}]\theta[{}^{00101}_{01000}]\theta[{}^{00101}_{11000}]\big)^{1/2},\\
 r_3 &= \big(\theta[{}^{00000}_{00001}]\theta[{}^{00000}_{10001}]\theta[{}^{00000}_{01001}]\theta[{}^{00000}_{11001}] 
 \theta[{}^{00110}_{00001}]\theta[{}^{00110}_{10001}]\theta[{}^{00110}_{01001}]\theta[{}^{00110}_{11001}] \times \\ &\quad \times
 \theta[{}^{00010}_{00001}]\theta[{}^{00010}_{10001}]\theta[{}^{00010}_{01001}]\theta[{}^{00010}_{11001}] 
 \theta[{}^{00100}_{00001}]\theta[{}^{00100}_{10001}]\theta[{}^{00100}_{01001}]\theta[{}^{00100}_{11001}]\big)^{1/2}.
\end{align*}
The new collection of products $\{r_1,r_2,r_3\}$ again satisfies \eqref{SchRelG5}.
In a similar way one can increase the rank of group $(P)$ in the Schottky relation.

\section*{Aknowledgement}
The author is grateful to Y.\;Kopeliovich and V.\;Enolski for fruitful discussions. 
The author owed to the referee for extending the results to hyperelliptic curves 
with arbitrary complex branch points,
comparatively to the first version where only real 
branch points were taken into account. Discussion with the referee stimulated
finding out exact values of the $8$-th root of unity $\epsilon$ in the first, the second, and the general Thomae formulas.


The datasets generated during the current study are available in the arXiv.

\end{document}